\numberwithin{equation}{section}
\theoremstyle{plain}
\newtheorem{theorem}{Theorem}[section]
\newtheorem{lemma}[theorem]{Lemma}
\newtheorem{proposition}[theorem]{Proposition}
\newtheorem{corollary}[theorem]{Corollary}
\theoremstyle{definition}
\newtheorem{remark}[theorem]{Remark}
\DeclareMathOperator{\charf}{char}
\renewcommand{\(}{\left(}
\renewcommand{\)}{\right)}
\newcommand{\ol}[1]{\overline{#1}}
\newcommand{\set}[1]{\left\lbrace#1\right\rbrace}
\newcommand{\mm}[4]{\left(\begin{smallmatrix} #1 & #2\\ #3 & #4\end{smallmatrix}\right)}
\newcommand{\mb}[4]{\left(\begin{array}{cc} #1 & #2\\ #3 & #4\end{array}\right)}
\DeclareMathOperator{\Sym}{Sym}
\DeclareMathOperator{\ord}{ord}
\DeclareMathOperator{\val}{val}
\DeclareMathOperator{\tr}{tr}
\DeclareMathOperator{\ind}{Ind}
\DeclareMathOperator{\SO}{SO}
\DeclareMathOperator{\Spin}{Spin}
\DeclareMathOperator{\Sp}{Sp}
\DeclareMathOperator{\GSp}{GSp}
\DeclareMathOperator{\PGSp}{PGSp}
\DeclareMathOperator{\SL}{SL}
\DeclareMathOperator{\GL}{GL}
\DeclareMathOperator{\PGL}{PGL}
\DeclareMathOperator{\diag}{diag}
\def\Q{{\mathbf Q}}
\def\Z{{\mathbf Z}}
\def\A{{\mathbf A}}
\def\R{{\mathbf R}}
\def\C{{\mathbf C}}
\begin{document}
\title[The integral of Kohnen and Skoruppa]{On the Rankin-Selberg integral of Kohnen and Skoruppa}

\author{Aaron Pollack}
\address{Department of Mathematics, Stanford University, Stanford, CA 94305, USA}\email{aaronjp@stanford.edu}
\author{Shrenik Shah}
\address{Department of Mathematics, Columbia University, New York, NY 10027, USA}\email{snshah@math.columbia.edu}

\thanks{A.P.\ has been partially supported by NSF grant DMS-1401858.  S.S.\ has been supported in parts through NSF grants DGE-1148900, DMS-1401967, and also through the Department of Defense (DoD) National Defense Science \& Engineering Graduate Fellowship (NDSEG) Program.}
\begin{abstract}
The Rankin-Selberg integral of Kohnen and Skoruppa produces the Spin $L$-function for holomorphic Siegel modular forms of genus two.  In this paper, we reinterpret and extend their integral to apply to arbitrary cuspidal automorphic representations of $\PGSp_4$.  We show that the integral is related to a non-unique model and analyze it using the approach of Piatetski-Shapiro and Rallis.
\end{abstract}

\maketitle

\section{Introduction}
The Rankin-Selberg integral representation of Kohnen-Skoruppa \cite{ks} produces the Spin $L$-function for holomorphic Siegel modular cusp forms on $\GSp_4$. Their integral makes use of a special Siegel modular form $P_D(Z)$, which is in the Saito-Kurokawa or Maass subspace.  Here $Z$ is the variable in the Siegel upper half space of genus two, and $D$ is a negative discriminant.  The proof in \cite{ks} is classical, and involves global calculations with the Fourier coefficients of Siegel modular forms.  In particular, Kohnen and Skoruppa make essential use of the Maass relations, which are identities satisfied by the Fourier coefficients of Siegel modular forms that are Saito-Kurokawa lifts.

The purpose of this paper is to reinterpret and extend the integral representation in \cite{ks}.  We define a class of automorphic functions $P_D^\alpha(g)$ on $\GSp_4$ depending on a nonzero square-free integer $D$ (which is allowed to be positive) and auxiliary data $\alpha$, generalizing the $P_D(Z)$ of \cite{ks}.  Denote by $Q$ a Klingen parabolic of $\GSp_4$, i.e.\ a maximal parabolic stabilizing a line in the defining four-dimensional representation of $\GSp_4$.  For a cusp form $\phi$ in the space of an automorphic cuspidal representation $\pi$ of $\GSp_4$ with trivial central character, we define
\[I(\phi,s) = \int_{Z(\mathbf{A})\GSp_4(\mathbf{Q}) \backslash \GSp_4(\mathbf{A})}{E(g,s,\Phi)P_D^\alpha(g)\phi(g)\,dg},\]
where $E(g,s,\Phi)$ is a Klingen Eisenstein series. We also define a second integral $J(\phi,s)$ that may be thought of as a degenerate version of $I(\phi,s)$.  The definitions of $E(g,s,\Phi)$ and $J(\phi,s)$ are given in Section \ref{subsec:globalintegrals} below.  We unfold these integrals and show that they are equal to the partial Spin $L$-function $L^S(\pi,\Spin,s)$ times an integral $I_S(\phi,s)$ or $J_S(\phi,s)$ over some set of bad places $S$.

It follows from a theorem of Li \cite{li} that every cuspidal automorphic representation on $\GSp_4$ possesses a nonzero Fourier coefficent of maximal rank for the Siegel parabolic.  We combine this with calculations at ramified finite places and the Archimedean place to show that one can always choose $D$ and the other data in the integral so that $I(\phi,s)$ is nonvanishing.  In the special case of a representation attached to a holomorphic Siegel cusp form of level one, we prove a more precise statement than this nonvanishing: for a particular choice of data at the Archimedean place, $I(\phi,s)$ is equal to the completed $L$-function.  This recovers the main result of Kohnen-Skoruppa \cite{ks}.

The integral $I(\phi,s)$ unfolds to a \emph{non-unique model}.  In the majority of the Rankin-Selberg literature, the unfolded integral involves a model of the cuspidal representation $\pi$ that is known to be unique, such as the Whittaker or Bessel model.  This uniqueness allows one to factorize the integral (for suitable data) and check, place-by-place, that the local factor of the unfolded integral is equal to the local Langlands $L$-function.  However, there are a handful of examples (\cite{bh2}, \cite{psr}, \cite{bfg}, \cite{gs}) of Rankin-Selberg convolutions that unfold to a model that is not unique, yet are still known to represent $L$-functions.  The integral in this paper is one more such example.

We are interested in $I(\phi,s)$ and $J(\phi,s)$ for several reasons.  They have the unusual ability to represent the Spin $L$-function for all cuspidal automorphic representations (for suitable data), rather than either only generic or only holomorphic ones.  They are also members of the limited class of Rankin-Selberg integrals that can potentially be reinterpreted in terms of geometric objects on a Shimura variety, and so could prove useful in answering arithmetic questions. There are higher rank generalizations of both these integrals to $\GSp_6$ that again give the Spin $L$-function \cite{pollackShah}.  The integrals in \cite{pollackShah} should have connections to motivic questions.

We briefly describe the special functions $P_D^\alpha$.  Recall that there is an isomorphism between $\PGSp_4$ and a split special orthogonal group $\SO(V_5,q)$.  Here $V_5$ is a certain $5$-dimensional rational vector space and $q$ is a quadratic form on $V_5$.  Let $D$ be nonzero and square-free, and choose $v_D$ in $V_5(\mathbf Q)$ such that $q(v_D) = D$.  Let $\alpha = \prod_v{\alpha_v}$ be a factorizable function in $\mathcal S(V_5(\mathbf A))$, the space of Schwartz functions on $V_5(\mathbf A)$, equal to the characteristic function of $V_5(\mathbf Z_p)$ almost everywhere.  In order to make the calculations cleaner, we assume that $\GSp_4$ acts on $V_5$ on the right.  The special automorphic function $P_D^\alpha$ is
\[P_D^\alpha(g) = \sum_{\delta \in \operatorname{Stab}(v_D)(\mathbf{Q})\backslash \GSp_4(\mathbf{Q})}{\alpha(v_D\delta g)},\]
where $\operatorname{Stab}(v_D)$ is the stabilizer of $v_D$ in $\GSp_4$.  (See Section \ref{P_D} for more details.  When $D\equiv 1 \pmod{4}$, we will make slightly different choices to avoid issues at the prime 2.)  Actually, the condition that $\alpha_\infty$ be Schwartz is too restrictive; to treat holomorphic Siegel modular forms it will behoove us to take $\alpha_\infty$ decaying polynomially at infinity.

One important difference between this paper and \cite{ks} is in our definition of the special function $P_D^\alpha$.  In \cite{ks}, $P_D$ is defined as the lift to the Maass subspace of a certain Poincare series for Jacobi forms.  The authors then use Hecke operators on Jacobi forms and the special properties of the Fourier-Jacobi expansion of elements in the Maass subspace as their key tools to relate $I(\phi,s)$ to the Spin $L$-function.  However, it is not all that difficult to check that the $P_D$ of \cite{ks} may be expressed as a sum of the above the type; hence our definition of $P_D^\alpha$.  By expressing $P_D^\alpha$ as a sum, we can use it to help unfold the global integral.  Thus our unfolding is different from that in \cite{ks}, as is our proof that the unfolded integral represents the Spin $L$-function.

When $D$ is negative, the data $\alpha$ may be chosen so that $\ol{P_D^\alpha(g)}$ corresponds to a classical Siegel modular form $\ol{P_D^\alpha(Z)}$, where $\ol{\cdot}$ denotes complex conjugation.  In this case, $\ol{P_D^\alpha(Z)}$ has an expression as a sum
\begin{equation}\label{PDintro}\overline{P_D^\alpha(Z)}= \sum_{v \in V_5(\mathbf{Z}), q(v) = -|D|}{\frac{1}{Q_v(Z)^r}}.\end{equation}
Here $Z$ is in the Siegel upper half-space, $Q_v$ is a certain quadratic form on the space of complex two-by-two symmetric matrices, and $r \geq 6$ is an integer.  (See Section \ref{P_D hol}.)  Modular forms similar to (\ref{PDintro}) have been considered previously by many authors in different contexts.  Zagier \cite{zagier} appears to have first defined the analogous functions for modular forms ($\PGL_2 \simeq \SO_3$) and Hilbert modular forms over a real quadratic field ($\PGL^*_{2,\mathbf{Q}(\sqrt{D})} \simeq \SO_4$).

Finally, let us mention some other integrals for the Spin $L$-function on symplectic groups. There are integrals of Novodvorsky \cite{nov} and Piatetski-Shapiro \cite{ps} (following Andrianov \cite{a4}) on $\GSp_4$, which unfold to the Whittaker and Bessel models, respectively.  There are also integrals of Bump-Ginzburg \cite{bg} on $\GSp_6, \GSp_8$ and $\GSp_{10}$, which unfold to the Whittaker models on these groups.  The integral in this paper draws upon similar techniques to \cite{pollackShah}, in which we give a new Rankin-Selberg integral for the Spin $L$-function on $\GSp_6$. Although the integral in \cite{pollackShah} does not apply to holomorphic Siegel modular forms, it uses the same special function $P_D^\alpha$ on $\GSp_4$ used here, and also unfolds to a non-unique model. 

The contents of the various sections are as follows.  In Section \ref{grps etc}, we define the groups, the global Rankin-Selberg integrals, and the special functions that we use.  In Section \ref{KS:global}, we unfold the integrals, discuss the general technique of Piatetski-Shapiro and Rallis to analyze non-unique models, and state the main theorems.  In Section \ref{unramified} we give the unramified calculation, and in Section \ref{ramified} we control the behavior at ramified finite primes and the Archimedean place.  In Section \ref{hol} we choose data appropriate for Siegel modular forms and precisely calculate a corresponding Archimedean integral $I_\infty(\phi,s)$ in terms of $\Gamma$-functions.

$\textrm{ }$

\noindent \emph{Acknowledgments}:  We thank Christopher Skinner for bringing to our attention the paper \cite{ks} and also for many helpful conversations during the course of this research.  We would also like to thank the anonymous referee for many helpful suggestions that substantially improved the exposition of this paper.

\section{Global constructions}\label{grps etc} In this section we define the global Rankin-Selberg integrals considered in this paper.
\subsection{Groups and embeddings} \label{subsec:groupdefs} We define
\[J_4=\left(\begin{array}{cc} & \mathbf{1}_2 \\ -\mathbf{1}_2 & \end{array}\right)\]
and $\GSp_4 = \{g \in \GL_4| gJ_4{}^{t}g = \nu(g) J_4\},$ where $\nu: \GSp_4 \rightarrow \GL_1$ is the similitude and $\mathbf{1}_n$ denotes the $n\times n$ identity matrix.  The letter $Z$ is used to denote the diagonal center of $\GSp_4$.  Denote by $W_4 = \Q^4$ the defining $4$-dimensional representation of $\GSp_4$ and by $\langle \cdot , \cdot\rangle$ the symplectic pairing.  We let $\GSp_4$ act on $W_4$ on the right.  That is, we think of $W_4$ as row vectors, and the action of $\GSp_4$ is the usual right action of matrices on row vectors.  Denote the ordered basis of $W_4$ by $\{e_1,e_2,f_1,f_2\}$, so that $\langle e_i, f_j \rangle = \delta_{i,j}$.    

Let $L = \mathbf{Q}[x]/(x^2-D)$ be a quadratic etale $\mathbf{Q}$-algebra, where $D$ is square-free and nonzero.  Let $\GL_{2,L}$ denote the restriction of scalars $\mathrm{Res}^L_{\mathbf{Q}} \GL_2$, and denote by $\GL_{2,L}^*$ the subgroup of $\GL_{2,L}$ with determinant in $\mathbf{G}_m \subseteq \mathrm{Res}_\mathbf{Q}^L \mathbf{G}_m$.  (In particular, $\GL_{2,L}^*(\mathbf{Q})$ is the set of invertible $2 \times 2$ matrices with entries in $L$ and determinant in $\mathbf{Q}^\times$.)  Suppose first that $D \not \equiv 1 \pmod{4}$.  One can define an embedding $\GL_{2,L}^* \rightarrow \GSp_4$ as follows.  We consider the $L$-vector space $L^2$ and write $c_1$ and $c_2$ for its two standard basis elements.  There is an $L$-valued symplectic form $\langle \cdot , \cdot \rangle_L$ on $L^2$ given by
\[\langle u_1 c_1 + u_2 c_2 , v_1 c_1 + v_2 c_2 \rangle_L = u_1 v_2 - u_2 v_1.\]
Suppose that $\alpha$ denotes the image of $x$ in $L = \Q[x]/(x^2-D)$, so that $\alpha$ is a square root of $D$.  We write $\ol{\cdot}$ for the action of the nontrivial Galois automorphism on $L$.  Define a $\Q$-valued symplectic form $\langle \cdot,\cdot \rangle'$ on $L^2$ by $\langle u, v \rangle' = \tr_{L/\Q}\left(\alpha \langle u, v\rangle_L\right)/(2D)$.    Consider the $\Q$-basis
\begin{equation}\label{basis} e_1 = c_1, \; e_2 = \alpha c_1, \; f_1 = -\ol{\alpha} c_2, \; f_2 = c_2\end{equation}
of $L^2$.  Then $\langle e_i, f_j \rangle' = \delta_{ij}$, and $\langle e_i, e_j \rangle' = \langle f_i, f_j \rangle' = 0$.  Hence the basis (\ref{basis}) yields an identification
\begin{equation}\label{pairs} (L^2, \langle \cdot , \cdot \rangle') \simeq (W_4, \langle \cdot, \cdot \rangle).\end{equation}
If $g \in \GL_{2,L}$ and $u, v \in L^2$, then $\langle ug, vg \rangle_L = \det(g) \langle u, v \rangle_L$.  Hence if $g \in \GL_{2,L}^*$, then $\langle ug, vg \rangle' = \det(g) \langle u, v \rangle'$.  Thus (\ref{pairs}) yields the inclusion $\GL_{2,L}^* \rightarrow \GSp_4$.

Suppose that $g = \mm{u_1}{u_2}{u_3}{u_4} \in \GL_{2,L}^*$, where $u_i = \epsilon_i + \alpha \eta_i$ for $\epsilon_i, \eta_i$ in $\Q$.  Then under the mapping $\GL_{2,L}^* \rightarrow \GSp_4$, the image of $g$ is the matrix
\begin{equation}\label{GGmatrix2} \left(\begin{array}{cc|cc} \epsilon_1 & \eta_1 & \eta_2 & \epsilon_2 \\ D \eta_1 & \epsilon_1 & \epsilon_2 & D \eta_2 \\ \hline D \eta_3 & \epsilon_3 & \epsilon_4 & D \eta_4 \\ \epsilon_3 & \eta_3 & \eta_4 & \epsilon_4 \end{array}\right). \end{equation}
If $D \equiv 1 \pmod{4}$ and one uses the minimal polynomial $x^2+x+\frac{1-D}{4}$ of $\frac{-1+\sqrt{D}}{2}$ in place of $x^2-D$ and adjusts the definition of the pairing to $\langle u, v \rangle' = \tr_{L/\Q}\left((2 \alpha + 1) \langle u, v\rangle_L\right)/D$, the preceding discussion (with $\alpha$ the image of $x$) leads in exactly the same way to the mapping of $g=\mm{u_1}{u_2}{u_3}{u_4}$ to
\begin{equation}\label{GGmatrix3} \left(\begin{array}{cc|cc}\epsilon_1&\eta_1 &\eta_2&\epsilon_2-\eta_2 \\ \frac{D-1}{4}\eta_1&\epsilon_1-\eta_1&\epsilon_2-\eta_2&-\epsilon_2+\eta_2\frac{D+3}{4}\\ \hline \epsilon_3+\frac{D-1}{4}\eta_3&\epsilon_3&\epsilon_4&\frac{D-1}{4}\eta_4 \\ \epsilon_3 & \eta_3 & \eta_4& \epsilon_4-\eta_4\end{array}\right).\end{equation}
We remark that the image of $\GL_{2,L}^*$ has an alternative description as the centralizer of the matrix $\mm{\epsilon_D}{}{}{^t\epsilon_D}$ inside $\GSp_4$, where we take $\epsilon_D = \mm{}{1}{D}{}$ if $D \equiv 2, 3 \pmod{4}$ and $\epsilon_D = \mm{}{1}{\frac{D-1}{4}}{-1}$ if $D \equiv 1 \pmod{4}$.

If $D =1$, so that $L \cong \Q \times \Q$, then $\GL_{2,L}^*$ is isomorphic to the subgroup of $\GL_2 \times \GL_2$ consisting of the $(g_1, g_2)$ with $\det(g_1) = \det(g_2)$.  For later use, we record the following explicit realization of this identity.  Set $e_1' = e_1+e_2$, $e_2' = e_2$, $f_1' = f_1$, and $f_2' = f_1 - f_2$.  Then if $D=1$, the matrix in (\ref{GGmatrix3}) sends
\begin{align}\label{D=1} e_1' &\mapsto \epsilon_1e_1' + \epsilon_2f_1' \\ f_1' &\mapsto \epsilon_3e_1'+\epsilon_4f_1' \nonumber \\ e_2' &\mapsto (\epsilon_1 -\eta_1) e_2' + (\eta_2 - \epsilon_2) f_2' \nonumber \\ f_2' &\mapsto (\eta_3 - \epsilon_3) e_2' + (\epsilon_4 - \eta_4) f_2'. \nonumber \end{align}

\subsection{The special function $P_D^\alpha$}\label{P_D}
We now define the special function $P_D^\alpha$ to be used in the integral representations.  Recall that $\nu$ denotes the similitude character.  Define 
\[\wedge^2_0(W_4) = \ker\{\wedge^2 W_4 \stackrel{v\wedge w \mapsto \langle v,w\rangle}{\longrightarrow} 1 \otimes \nu\}\]
and define a 5-dimensional representation $V_5$ of $\GSp_4$ as the diagonal action on $V_5 = \wedge^2_0(W_4) \otimes \nu^{-1}$.  As $\wedge^4 W_4 \otimes \nu^{-2}$ is the trivial representation of $\GSp_4$, the exterior product $\wedge: V_5 \otimes V_5 \rightarrow \wedge^4 W_4 \otimes \nu^{-2}$ defines the invariant quadratic form $q$ on $V_5$.  More precisely, we first define $( \cdot, \cdot )$ via
\[v \wedge w = (v,w) e_1 \wedge e_2 \wedge f_1 \wedge f_2,\]
where $e_1 \wedge e_2 \wedge f_1 \wedge f_2$ is here considered as an element of $\wedge^4 W_4 \otimes \nu^{-2}$.  The triviality of $\wedge^4 W_4 \otimes \nu^{-2}$ shows that $(\cdot,\cdot)$ is invariant.  We then define the quadratic form $q(v)=\frac{1}{2}(v,v)$.  If $D \not\equiv 1 \pmod{4}$, we define an integral structure on $V_5$ by choosing the integral basis
\[\set{e_1\wedge f_1 - e_2\wedge f_2, e_1 \wedge e_2, e_1 \wedge f_2, e_2 \wedge f_1, f_1\wedge f_2}\]
and define
\begin{equation} \label{vdnot1mod4} v_D = De_1 \wedge f_2 + e_2 \wedge f_1. \end{equation}
Then $q(v_D) = (v_D,v_D) = D$, and one can check that the stabilizer of $v_D$ is ${\GL_{2,L}^*}$ in the embedding described by (\ref{GGmatrix2}).  If $D \equiv 1 \pmod{4}$, we replace $e_1\wedge f_1 - e_2\wedge f_2$ by $\frac{1}{2}(e_1\wedge f_1 - e_2\wedge f_2)$ in the integral basis and set
\begin{equation} \label{vd1mod4} v_D = \frac{D-1}{4}e_1 \wedge f_2+\frac{1}{2}(e_1\wedge f_1 - e_2\wedge f_2)+ e_2 \wedge f_1\end{equation}
so that $q(D)=\frac{D}{4}$.  Once again, the stabilizer is $\GL_{2,L}^*$ in its embedding from (\ref{GGmatrix3}).  To see that the chosen integral structure is $\GSp_4(\mathbf{Z})$-stable, we observe that since $e_1\wedge f_1 + e_2\wedge f_2$ is fixed by the action of $g$ (due to the twisting by $\nu^{-1}$ in the definition of $V_5$), we have
\begin{align*}
	\frac{1}{2}(e_1\wedge f_1 - e_2\wedge f_2)g &= \frac{1}{2}(e_1\wedge f_1 + e_2\wedge f_2-2e_2 \wedge f_2)g = \frac{1}{2}(e_1\wedge f_1 + e_2\wedge f_2)-(e_2\wedge f_2)g\\
		&=\frac{1}{2}(e_1\wedge f_1 - e_2\wedge f_2)+e_2 \wedge f_2 - (e_2\wedge f_2)g,
\end{align*}
which is in $V_5(\mathbf{Z})$.

Now take $\alpha = \prod_{v}{\alpha_v}$ to be factorizable in the Schwartz space $\mathcal{S}(V_5(\mathbf{A}))$.  For all but finitely many finite primes $p$, we require that $\alpha_p$ is the characteristic function of $V_5(\mathbf{Z}_p)$ (using the integral structure defined above).  We define
\[P_D^\alpha(g) = \sum_{\delta \in \GL_{2,L}^*(\mathbf{Q})\backslash \GSp_4(\mathbf{Q})}{\alpha(v_D\delta g)}.\]
In Section \ref{P_D hol} we will choose an $\alpha_\infty$ that is very convenient for computing with holomorphic Siegel modular forms, although this $\alpha_\infty$ will not be a Schwartz function.

\subsection{The global integrals} \label{subsec:globalintegrals}
We now define the two global Rankin-Selberg convolutions to be studied in this paper.  The integral representations use the Klingen Eisenstein series on $\GSp_4$.  We define the Klingen parabolic, denoted $Q$, to be the stabilizer of the line $\mathbf{Q} f_2$ via the right action.  Alternatively, $Q$ is the subgroup of $\GSp_4$ consisting of elements of the form
\begin{equation*} g= \left(\begin{array}{cccc} * & 0 & * & * \\ * & a & * & * \\ * & 0 & * & * \\ 0 & 0 & 0 & d \end{array}\right).\end{equation*}
For such a $g$, $\nu(g) = ad$, $f_2 g = d f_2$, and $\delta_Q(g) = |\frac{a}{d}|^2$.

If $\Phi = \prod_{v}{\Phi_v} \in \mathcal{S}(W_4(\mathbf{A}))$ is a factorizable Schwartz-Bruhat function, define
\begin{equation}\label{f_pdef}f_v^\Phi(g,s) = |\nu(g)|_v^{2s} \int_{\GL_1(\mathbf{\Q}_v)}{\Phi_v(tf_2g)|t|_v^{4s}\,dt}\end{equation}
and
\begin{equation}\label{fPhi}f^\Phi(g,s) = |\nu(g)|^{2s} \int_{\GL_1(\mathbf{A})}{\Phi(tf_2g)|t|^{4s}\,dt}= \prod_v{f^\Phi_v(g_v,s)}. \end{equation}
Then $f^\Phi(g,s) \in \ind_{Q(\A)}^{\GSp_4(\A)}(\delta_Q^s)$, where $\delta_Q$ is the modulus character of the Klingen parabolic.   The Eisenstein series is
\[E(g,s,\Phi) = \sum_{\gamma \in Q(\mathbf{Q})\backslash\GSp_4(\mathbf{Q})}{f^\Phi(\gamma g,s)}.\]
\begin{remark} If $\Phi_p$ is the characteristic function of $W_4(\Z_p)$ for $p < \infty$ and $\Phi_\infty(v)$ is the Gaussian $e^{-\pi ||v||^2}$, then $E(g,s, \Phi)$ is the normalized spherical Eisenstein series.  Indeed, denote by $f_v^0(g,s)$ the unique element of $\ind_{Q(\Q_v)}^{\GSp_4(\Q_v)}(\delta_Q^s)$ that satisfies $f_v^0(k,s) = 1$ for all $k \in K_v$.  Then for every $v$, with this choice of $\Phi_v$, $f_v^\Phi(k,s) = \zeta_v(4s)$ for $k\in K_v$, where $\zeta_\R(s) = \Gamma_\R(s) = \pi^{-s/2}\Gamma(s/2)$.  Hence by the Iwasawa decomposition, $f^\Phi_v(g,s) = \zeta_v(4s) f_v^0(g,s)$.  Langlands' theorem of the constant term and functional equation then imply, by an intertwining operator calculation, that this $E(g,s, \Phi)$ has finitely many poles and satisfies the functional equation $E(g,s,\Phi) = E(g,1-s,\Phi)$.  For general Schwartz-Bruhat functions $\Phi$, one can obtain a bound on the poles of $E(g,s,\Phi)$ and its functional equation by using Poisson summation. Since we will not use these facts, we omit the proof.\end{remark}

Suppose $\pi$ is a cuspidal automorphic representation on $\GSp_4$ with trivial central character and $\phi$ is a cusp form in the space of $\pi$.  We define our global integrals as follows:
\begin{equation} \label{eqn:intpd} I(\phi,s) = \int_{Z(\mathbf{A})\GSp_4(\mathbf{Q}) \backslash \GSp_4(\mathbf{A})}{E(g,s,\Phi)P_D^\alpha(g)\phi(g)\,dg}\end{equation}
and
\begin{equation} \label{eqn:intgl2star} J(\phi,s) = \int_{Z(\mathbf{A}){\GL_{2,L}^*}(\mathbf{Q}) \backslash {\GL_{2,L}^*}(\mathbf{A})}{E(g,s,\Phi)\phi(g)\,dg}.\end{equation}
The integrals $I(\phi,s)$ and $J(\phi,s)$ depend on $D$ and $\Phi$, but we omit this data from the notation.  The main theorems of this paper, spelled out in the next section, relate both of these integrals to the partial Spin $L$-function on $\GSp_4$, $L^S(\pi,\Spin,2s-\frac{1}{2})$.

The integrals $I(\phi,s)$ and $J(\phi,s)$ are very closely related. In the proof of Proposition \ref{unfoldProp} below, after unfolding the sum defining $P_D^\alpha$ in $I(\phi,s)$, we find that the integral $J(\phi,s)$ appears as an inner integral of this partially unfolded $I(\phi,s)$.  Note also that $J(\phi,s)$ is similar to the integrals of Andrianov \cite{a4} and Piatetski-Shapiro \cite{ps} for the Spin $L$-function on $\GSp_4$, both of which unfold to the Bessel model.  The main difference is that the integrals of \cite{a4} and \cite{ps} use an Eisenstein series for the group $\GL_{2,L}^*$ as opposed to one for $\GSp_4$.  The benefit of studying both integrals is that the integral $I(\phi,s)$ provides a more flexible construction than $J(\phi,s)$, while $J(\phi,s)$ makes the connection to the geometry of the Shimura variety (and with this the potential to answer motivic questions) more direct.
\section{Main theorems}\label{KS:global}
In this section we unfold the global integrals and state the main theorems of the paper.  The calculations proving these theorems are given in Sections \ref{unramified}, \ref{ramified}, and \ref{hol}.  As we mentioned above, the integral unfolds to a model that is not unique.  The general strategy for analyzing such integrals -- which we follow -- was developed by Piatetski-Shapiro and Rallis \cite{psr}, and is somewhat well-known \cite{bfg, gs}.  For the convenience of the reader, we briefly review this strategy.
\subsection{Unfolding}
Define the standard Siegel parabolic $P$ on $\GSp_4$ to be the stabilizer of the isotropic subspace of $W_4$ spanned by $f_1, f_2$.  Denote by $U$ its unipotent radical, and by $M$ the Levi consisting of elements of $\GSp_4$ whose lower left and upper right two-by-two blocks are zero.  We denote by $N$ the unipotent radical of the upper triangular Borel in $\GL_{2,L}^*$.  Then, under the embedding $\GL_{2,L}^* \rightarrow \GSp_4$, $N$ is contained in $U$.  This can be seen immediately from the matrices (\ref{GGmatrix2}) and (\ref{GGmatrix3}). 

In fact, define a unitary character $\chi: U(\mathbf{A}) \rightarrow \mathbf{C}^\times$ as follows.  As is usual, pick once and for all an additive character $\psi: \mathbf{Q}\backslash \mathbf{A} \rightarrow \mathbf{C}^\times$, with conductor equal to one at all finite places, and $\psi_\infty(x) = e^{2 \pi i x}$ for $x \in \R$.  If
\[u = \left(\begin{array}{cc|cc}1& &u_{11}&u_{12} \\ & 1&u_{12}&u_{22}\\ \hline & &1& \\ & & &1\end{array}\right),\]
then set
\[\chi(u) = \begin{cases} \psi(-Du_{11} + u_{22}) & D \not \equiv 1 \pmod{4} \\ \psi(\frac{1-D}{4}u_{11} + u_{12} + u_{22}) & D \equiv 1 \pmod{4} \end{cases}.\]
Note that $\chi$ is trivial on $N$.  With this definition of $\chi$, we define functions $\phi_\chi$ and $\alpha_\chi$ on $\GSp_4$ by the absolutely convergent integrals
\begin{equation} \label{eqn:phialphachidef} \phi_\chi(g) = \int_{U(\mathbf{Q})\backslash U(\mathbf{A})}{\chi^{-1}(u)\phi(ug) \, du} \quad \text{and}\quad \alpha_\chi(g) = \int_{N(\mathbf{A})\backslash U(\mathbf{A})}{\chi(u)\alpha(v_Dug) \, du}.\end{equation}
The second integral is factorizable since $\chi = \prod_p \chi_p$ and $\alpha$ are, so we may write $\alpha_\chi=\prod_p \alpha_{\chi,p}$, where
\begin{equation} \label{eqn:alphachivdef} \alpha_{\chi,p}(g_p) = \int_{N(\mathbf{Q}_p)\backslash U(\mathbf{Q}_p)}\chi_p(u)\alpha_p(v_{D,p}ug_p) \, du.\end{equation}
Here $p$ denotes either an infinite or finite place of $\mathbf{Q}$.

\begin{lemma}
The integral for $\alpha_\chi$ in (\ref{eqn:phialphachidef}) is absolutely convergent.
\end{lemma}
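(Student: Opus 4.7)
The plan is to parametrize the quotient $N(\mathbf{A}) \backslash U(\mathbf{A})$ by an affine line in $V_5$ through $v_D$, reducing the integral to the pairing of a Schwartz-Bruhat function on $\mathbf{A}$ against a unitary character; absolute convergence is then immediate. The main step is an exterior-algebra computation describing how $U$ moves $v_D$.

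First, a general element $u \in U$ is parametrized by the symmetric upper-right block $X = \mm{u_{11}}{u_{12}}{u_{12}}{u_{22}}$, with $f_1, f_2$ fixed and $e_i u = e_i + (\text{linear combination of } f_1, f_2)$. From this one computes the induced action of $u$ on $V_5$ in the integral basis of Section \ref{P_D} and finds, by direct expansion, that $(e_1\wedge f_2) u = e_1\wedge f_2 + u_{11}\, f_1\wedge f_2$, $(e_2\wedge f_1) u = e_2\wedge f_1 - u_{22}\, f_1\wedge f_2$, and $(e_1\wedge f_1 - e_2\wedge f_2) u = (e_1\wedge f_1 - e_2\wedge f_2) - 2 u_{12}\, f_1\wedge f_2$, while $w_5 := f_1\wedge f_2$ is fixed. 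Hence $v_D u - v_D$ is a scalar multiple of $w_5$ depending linearly on $X$, and direct substitution using the explicit formulas (\ref{vdnot1mod4}) and (\ref{vd1mod4}) for $v_D$ shows that in both parities of $D$ this scalar equals $-\ell(u)$, where $\chi(u) = \psi(\ell(u))$. In particular $N = \ker \ell|_U$ recovers $\mathrm{Stab}_U(v_D)$ (confirming the stabilizer description of Section \ref{P_D}), and $u \mapsto \ell(u)$ descends to an isomorphism $N(\mathbf{A}) \backslash U(\mathbf{A}) \cong \mathbf{A}$.

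Under this parameterization the integral defining $\alpha_\chi(g)$ becomes
\[\alpha_\chi(g) = c \int_{\mathbf{A}} \psi(-t) \, \alpha(v_D g + t \, w_5 g) \, dt\]
for an unimportant Jacobian constant $c$. The restriction of a Schwartz-Bruhat function on $V_5(\mathbf{A})$ along the affine line $t \mapsto v_D g + t w_5 g$ is again Schwartz-Bruhat on $\mathbf{A}$: at each place by the standard local fact, and with local factor equal to $\mathbf{1}_{\mathbf{Z}_v}$ at almost all finite $v$ because $v_D g_v$ and $w_5 g_v$ lie in $V_5(\mathbf{Z}_v)$ whenever $g_v \in \GSp_4(\mathbf{Z}_v)$ and $\alpha_v = \mathbf{1}_{V_5(\mathbf{Z}_v)}$. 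Hence the integrand is $L^1(\mathbf{A})$, and since $|\psi(-t)| = 1$, the original integral converges absolutely. The only nontrivial step is the exterior-algebra identification of $\ell$ with the $w_5$-coefficient of $v_D u - v_D$, a short linear-algebra computation that I expect to pose no real obstacle.
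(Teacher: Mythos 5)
Your proof is correct and follows essentially the same approach as the paper: the paper picks the explicit coset representatives $n_{24}(z)$ for $N(\mathbf{A})\backslash U(\mathbf{A})$ and computes $v_D n_{24}(z) = v_D + z\, f_2\wedge f_1$, whereas you work with a general $u\in U$ and identify $v_D u - v_D = -\ell(u)\, f_1\wedge f_2$ with $\chi = \psi\circ\ell$; after a sign-flip change of variables these are the same one-dimensional integral of a Schwartz--Bruhat function against a unitary character, so convergence follows identically. The extra observation that $N = \ker\ell|_U = \mathrm{Stab}_U(v_D)$ is a nice sanity check but not needed for the lemma.
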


\begin{proof}
Note that the function $\alpha': V_5(\mathbf{A})\rightarrow \mathbf{C}$ defined by $\alpha'(v) = \alpha(vg)$ is still a Schwartz-Bruhat function.  Set
\begin{equation*} n_{24}(z) = \left(\begin{array}{cc|cc} 1& & &\\ &1& &z\\ \hline & &1& \\ & & &1\end{array}\right).\end{equation*}
Then $v_D n_{24}(z) = v_D + z f_2 \wedge f_1$ for all $D$.  We obtain
\begin{align}\label{vDz} \alpha_{\chi}(g) &= \int_{N(\A) \backslash U(\A)}{\chi(u) \alpha(v_D u g)\, du} \nonumber \\ &= \int_{\mathbf{G}_a(\A)}{\psi(z) \alpha'(v_Dn_{24}(z))\, dz} \nonumber \\ &= \int_{\mathbf{G}_a(\A)}{\psi(z) \alpha'(v_D + z f_2 \wedge f_1)\, dz}. \end{align}
Since $\alpha'$ is a Schwartz-Bruhat function, it is easily seen that the function $z \mapsto \alpha'(v_D + z f_2 \wedge f_1)$ is a Schwartz-Bruhat function on $\A$.  Hence the integral (\ref{vDz}) is absolutely convergent.
\end{proof}

\begin{proposition}\label{unfoldProp}  The global integrals $I$ and $J$ unfold as
\begin{equation}\label{unfoldI}I(\phi,s) = \int_{U(\mathbf{A})Z(\mathbf{A})\backslash\GSp_4(\mathbf{A})}{f^\Phi(g,s)\phi_\chi(g)\alpha_\chi(g)\,dg}\end{equation}
and
\begin{equation}\label{unfoldJ}J(\phi,s) = \int_{N(\mathbf{A})Z(\mathbf{A})\backslash\GL_{2,L}^*(\mathbf{A})}{f^\Phi(g,s)\phi_\chi(g) \,dg}.\end{equation}
\end{proposition}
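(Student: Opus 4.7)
The proof follows the unfolding method. For $I(\phi,s)$, an initial unfolding of $P_D^\alpha$ is required; for $J(\phi,s)$, one only unfolds the Eisenstein series.

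For $I(\phi,s)$, first substitute $P_D^\alpha(g) = \sum_{\delta} \alpha(v_D \delta g)$ and collapse the sum against the outer $\GSp_4(\Q)$-quotient. Since $E$ and $\phi$ are $\GSp_4(\Q)$-invariant and $\alpha(v_D g)$ is $\GL_{2,L}^*(\Q) = \mathrm{Stab}(v_D)$-invariant, this yields
\[I(\phi,s) = \int_{Z(\A) \GL_{2,L}^*(\Q) \backslash \GSp_4(\A)} E(g,s,\Phi) \alpha(v_D g) \phi(g) \, dg.\]
Restricting the integrand to $\GL_{2,L}^*(\A) \subset \GSp_4(\A)$, where $\alpha(v_D g) = \alpha(v_D)$ is constant, recovers the integrand of $J(\phi,s)$ up to this constant; this realizes the relationship between $I$ and $J$ described in Section \ref{subsec:globalintegrals}.

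Now unfold the Eisenstein series $E(g,s,\Phi) = \sum_\gamma f^\Phi(\gamma g, s)$ against the $\GL_{2,L}^*(\Q)$-quotient. Combining the $\gamma$-sum with the quotient produces an integral whose natural stabilizer is $H := Q \cap \GL_{2,L}^*$. Using the explicit embedding (\ref{GGmatrix2}), one checks that $H = T_H \cdot N$, where $T_H$ is a two-dimensional diagonal torus and $N$ is the unipotent radical of the Borel of $\GL_{2,L}^*$, which coincides with $U \cap \GL_{2,L}^*$. Introduce an integration over $N(\Q) \backslash N(\A)$, legitimate since both $f^\Phi$ (as $N \subset U \subset Q$) and $\alpha(v_D \cdot)$ (as $N \subset \GL_{2,L}^* = \mathrm{Stab}(v_D)$) are $N$-invariant. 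Then Fourier-expand the resulting $N$-period of $\phi$ along the abelian Siegel unipotent $U$. The characters of $U(\Q) \backslash U(\A)$ that restrict trivially to $N(\A)$ are naturally indexed by $\Q$: the trivial-character term vanishes by the cuspidality of $\phi$, and the remaining characters form a single $T_H(\Q)$-orbit through $\chi$. The resulting sum over $T_H(\Q)$-translates combines with the outer $T_H(\Q)$-quotient from the unfolding to produce the Fourier coefficient $\phi_\chi$, changing the denominator to $N(\A) Z(\A)$. Finally, the transverse integration over $N(\A) \backslash U(\A)$ produces $\alpha_\chi$ via the identity $v_D n_{24}(z) = v_D + z f_2 \wedge f_1$ from equation (\ref{vDz}), upgrading the denominator to $U(\A) Z(\A)$. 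For $J(\phi,s)$, the same argument applied directly to the Eisenstein series (skipping the $P_D^\alpha$ step) produces $\phi_\chi$ with denominator $N(\A) Z(\A)$.

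The main obstacle is the Fourier-theoretic bookkeeping in the second step: one must carefully track the $\delta_Q^s$-transformation of $f^\Phi$ under $T_H(\Q)$, show that the sum over $T_H(\Q)$-translates of $\chi$ combines consistently with the unfolding measure to give a well-defined integral, and verify via cuspidality that the trivial-character term indeed vanishes. This is the Piatetski-Shapiro-Rallis technique adapted to a setting where the resulting non-unique model is captured by the pair $(\phi_\chi, \alpha_\chi)$.
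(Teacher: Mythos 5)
Your proposal follows the same route as the paper's proof: unfold $P_D^\alpha$ to obtain an integral over $\GL_{2,L}^*(\Q)Z(\A)\backslash\GSp_4(\A)$, unfold the Klingen Eisenstein series against the $\GL_{2,L}^*(\Q)$-quotient, Fourier-expand the $N$-period of $\phi$ along the abelian $U$, recombine the orbit of nontrivial characters with the residual torus quotient to produce $\phi_\chi$, and then fold a transverse $N(\A)\backslash U(\A)$-integration onto $\alpha$ to obtain $\alpha_\chi$. Your $T_H(\Q)$-bookkeeping matches the paper's use of $\GL_1(\Q)$, since $T_H/Z$ is one-dimensional and $Z$ acts trivially on $U$ and on its characters.

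However, there is a genuine gap. Your unfolding of the Eisenstein series silently assumes that $Q(\Q)\backslash\GSp_4(\Q)/\GL_{2,L}^*(\Q)$ is a single double coset, equivalently that $\GL_{2,L}^*(\Q)$ acts transitively on lines in $W_4(\Q)$. You never verify this even when $L$ is a field (the paper deduces it from the transitivity of $\SL_{2,L}(\Q)$ on $L^2\setminus\{0\}$), and, more seriously, it is \emph{false} when $D=1$ and $L\cong\Q\times\Q$. In the split case the proposition is still being asserted, but there are exactly three orbits of lines, represented by $\Q f_2$, $\Q f_1'$, and $\Q f_2'$, and the Eisenstein series unfolds into a sum $I_{f_2}+I_{f_1'}+I_{f_2'}$ (similarly for $J$). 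The extra terms $I_{f_1'}$ and $I_{f_2'}$ do not feed into $\phi_\chi$; the paper kills them by a further Fourier expansion along a unipotent subgroup and the cuspidality of $\phi$. Your proposal omits this entirely, so as written it does not prove the proposition for $D=1$. Incidentally, the ``main obstacle'' you flag, tracking the $\delta_Q^s$-transformation of $f^\Phi$ under $T_H(\Q)$, is in fact a non-issue by the product formula; the real additional work in the proof is the split-case orbit analysis and the cuspidality argument that handles it.
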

\begin{proof} We first unfold $I(\phi,s)$.  Assume for now that $L$ is a field.  Unfolding $P_D^\alpha$, one obtains
\begin{equation}\label{unfold0} I(\phi,s) = \int_{\GL_{2,L}^*(\Q)Z(\mathbf{A})\backslash\GSp_4(\mathbf{A})}{E(g,s,\Phi)\phi(g)\alpha(v_Dg)\,dg}. \end{equation}
Although it is not needed below, note that the integral $J(\phi,s)$ appears if one performs an inner integration in (\ref{unfold0}) over the group $\GL_{2,L}^*(\A)$, since $v_D$ is stabilized by $\GL_{2,L}^*(\A)$.

Next, we unfold the Eisenstein series.  Set $B'(\Q) = Q(\Q) \cap \GL_{2,L}^*(\Q)$.  We claim that the double coset space $Q(\Q) \backslash \GSp_4(\Q) \slash \GL_{2,L}^*(\Q)$ is a singleton, and thus
\begin{equation}\label{Eis_Sum} E(g,s,\Phi) = \sum_{\gamma \in B'(\Q) \backslash \GL_{2,L}^*(\Q)}{f^\Phi(\gamma g, s)}. \end{equation}
Assuming this fact for the moment, one obtains
\begin{equation}\label{unfold1}I(\phi,s) = \int_{B'(\mathbf{Q})Z(\mathbf{A}) \backslash \GSp_4(\mathbf{A})}{\alpha(v_Dg)f^\Phi(g,s)\phi(g)\,dg}\end{equation}
by inserting (\ref{Eis_Sum}) into (\ref{unfold0}) and unfolding the sum over $\GL_{2,L}^*(\Q)$.  To check the claim, note that the map $Q(\Q)g \mapsto \Q f_2 g$ defines a bijection between $Q(\Q) \backslash \GSp_4(\Q)$ and lines in $W_4$.  Moreover, ${\GL_{2,L}^*}$ acts transitively on the lines in $W_4$ since $\SL_{2,L}$ acts transitively on the nonzero elements of its two-dimensional representation $L^2$.  Hence $Q(\Q) \backslash \GSp_4(\Q) \slash \GL_{2,L}^*(\Q)$ is indeed a singleton.

Using the notation (\ref{GGmatrix2}) or (\ref{GGmatrix3}), $B'$ consists of the elements of $\GSp_4$ with $\epsilon_3 = \eta_1 = \eta_3 = \eta_4 =0$.  Now
\begin{equation}\label{unfold2}\int_{N(\mathbf{Q})\backslash N(\mathbf{A})}\phi(ng) \, dn = \sum_{\gamma \in \GL_1(\Q)}{\phi_\chi(\gamma g)},\end{equation}
where $\GL_1(\Q)$ denotes the matrices $\mm{\lambda}{}{}{1}$, $\lambda \in \Q^\times$.  Integrating the right-hand side of (\ref{unfold1}) over $N(\Q) \backslash N(\A)$, and then applying the Fourier expansion (\ref{unfold2}) into (\ref{unfold1}), we obtain
\[\int_{N(\A)Z(\A)\backslash \GSp_4(\A)}{f^\Phi(g,s) \phi_\chi(g) \alpha(v_D g)\,dg}.\]
The proposition follows in this case.

Now suppose that $D = 1$, so that $L \cong \mathbf{Q} \times \mathbf{Q}$ is split.  Then ${\GL_{2,L}^*}$ acts on the lines in $W_4$ with three orbits, represented by $\mathbf{Q} f_2, \mathbf{Q}(f_1'),$ and $\mathbf{Q}(f_2')$.  (Recall the notation (\ref{D=1}).)  Indeed, consider the action of $\GL_{2,L}^*$ on $W_4$ in terms of the basis (\ref{D=1}).  The action described in (\ref{D=1}) shows that $\mathbf{Q}f_1'$ and $\mathbf{Q}f_2'$ represent the orbits of lines in $\mathrm{span}(e_1',f_1')$ and $\mathrm{span}(e_2',f_2')$ respectively, while the fact that $\SL_2(\Q)$ acts transitively on $\Q^2\setminus \{0\}$ shows that $\Q (f_1'- f_2') = \Q f_2$ represents an orbit containing all other lines in $W_4$.  Fix $\gamma_{1}, \gamma_{2} \in \GSp_4(\Q)$ satisfying $f_2 \gamma_1 = f_1'$, $f_2 \gamma_2 = f_2'$.  We have just computed that $Q(\Q) \backslash \GSp_4(\Q) \slash \GL_{2,L}^* = \{1, \gamma_1, \gamma_2\}$.  Denote by $B_i'$ the subgroup of $\GL_{2,L}^*$ that stabilizes the line $\Q f_i'$.

Using these double coset representatives to unfold (\ref{unfold0}), we obtain
\[I(\phi,s) = I_{f_2}(\phi,s) + I_{f_1'}(\phi,s) + I_{f_2'}(\phi,s),\]
where $I_{f_2}(\phi,s)$ is the integral on the right-hand side of (\ref{unfold1}), and 
\[I_{f_i'}(\phi,s) = \int_{B_i'(\mathbf{Q})Z(\mathbf{A}) \backslash \GSp_4(\mathbf{A})}{\alpha(v_Dg)f^\Phi(\gamma_i g,s)\phi(g)\,dg}.\]

The unfolding of the integral $I_{f_2}(\phi,s)$ proceeds as above.  We check that the integrals $I_{f_i'}(\phi,s)$ vanish by the cuspidality of $\phi$.  Assume $i =1$; the case $i=2$ is identical.  Using the ordered symplectic basis $e_1', e_2', f_2', f_1'$ of $W_4$, $B_1'$ is the subgroup of $\GSp_4$ of matrices with the form
\begin{equation}\label{B_1'Shape} \left( \begin{array}{cccc} *&0&0&* \\ 0&*&*&0 \\ 0&*&*&0 \\ 0&0&0&* \end{array}\right). \end{equation}
Denote by $U_0$ the one-dimensional unipotent subgroup
\begin{equation*} \left( \begin{array}{cccc} 1&0&0&* \\ 0&1&0&0 \\ 0&0&1&0 \\ 0&0&0&1 \end{array}\right)\end{equation*}
of $\GSp_4$.  Let $\overline{U}$ be the abelian group $U/U_0$, where $U$ is the unipotent subgroup of $\GSp_4$ consisting of the matrices
\begin{equation*} u(x_1,x_2) \triangleq \left( \begin{array}{cccc} 1&x_1&x_2&* \\ 0&1&0&x_2 \\ 0&0&1&-x_1 \\ 0&0&0&1 \end{array}\right). \end{equation*}
Finally, let $B_1'' \subseteq B_1'$ denote the matrices in $\GSp_4$ that are of the form
\begin{equation*} \left( \begin{array}{cccc} a&0&0&* \\ 0&a&*&0 \\ 0&0&d&0 \\ 0&0&0&d \end{array}\right). \end{equation*}
We integrate $\phi$ over $U_0(\mathbf{Q}) \backslash U_0(\mathbf{A})$ and Fourier expand along $\overline{U}$ to obtain the identity
\begin{equation}\label{FE split} \phi_{U_0}(g) \triangleq \int_{U_0(\Q)\backslash U_0(\A)}{\phi(ug)\,du} = \sum_{\gamma \in B_1''(\Q) \backslash B_1'(\Q)} {\phi_{(1,0,0)}(\gamma g)}, \end{equation}
where
\[\phi_{(1,0,0)}(g) \triangleq \int_{\overline{U}(\Q)\backslash \overline{U}(\A)}{\psi(x_1) \phi_{U_0}(u(x_1,x_2) g)\, du}.\]

Next, we perform an inner integration over $U_0(\Q)\backslash U_0(\A)$ in $I_{f_1'}(\phi,s)$ and apply the Fourier expansion (\ref{FE split}) to obtain
\begin{equation}\label{splitUnfold1} I_{f_1'}(\phi,s) = \int_{B_1''(\mathbf{Q})U_0(\A)Z(\mathbf{A}) \backslash \GSp_4(\mathbf{A})}{\alpha(v_Dg)f^{\Phi}(\gamma_1 g,s)\phi_{(1,0,0)}(g)\,dg}.\end{equation}
Denote by $N_0$ the one-dimensional unipotent subgroup of $\GSp_4$ consisting of the matrices
\begin{equation*} \left( \begin{array}{cccc} 1&0&0&0 \\ 0&1&*&0 \\ 0&0&1&0 \\ 0&0&0&1 \end{array}\right).\end{equation*}
We perform an inner integration in (\ref{splitUnfold1}) over the group $N_0(\Q) \backslash N_0(\A)$, which shows that $I_{f_1'}(\phi,s)$ vanishes by the cuspidality of $\phi$ along the unipotent group
\begin{equation*} \left( \begin{array}{cccc} 1&0&*&* \\ 0&1&*&* \\ 0&0&1&0 \\ 0&0&0&1 \end{array}\right).\end{equation*}

Now we unfold $J(\phi,s)$.  The unfolding proceeds identically to the unfolding of $I(\phi,s)$ starting from the step (\ref{unfold0}).  For example, assume $L$ is a field.  The coset and stabilizer computations for $Q(\Q) \backslash \GSp_4(\Q) \slash \GL_{2,L}^*(\Q)$ above show that
\begin{equation}\label{Junfold2} J(\phi,s) = \int_{B'(\Q)Z(\A) \backslash \GSp_4(\A)}{f^\Phi(g,s)\phi(g)\,dg}. \end{equation}
We integrate over $N(\Q) \backslash N(\A)$ and use the Fourier expansion (\ref{unfold2}) to obtain
\begin{equation*} J(\phi,s) = \int_{N(\A)Z(\A) \backslash \GL_{2,L}^*(\A)}{f^\Phi(g,s)\phi_\chi(g)\,dg}.\end{equation*}

If instead $D=1$, we first use the coset computation $Q(\Q) \backslash \GSp_4(\Q) \slash \GL_{2,L}^*(\Q) = \{1, \gamma_1, \gamma_2\}$ above to obtain
\begin{equation*} J(\phi,s) = J_{f_2}(\phi,s) + J_{f_1'}(\phi,s) + J_{f_2'}(\phi,s),\end{equation*}
where $J_{f_2}(\phi,s)$ is the integral on the right-hand side of (\ref{Junfold2}), and
\begin{equation*} J_{f_i'}(\phi,s) = \int_{B_i'(\mathbf{Q})Z(\mathbf{A}) \backslash \GL_{2,L}^*(\mathbf{A})}{f^{\Phi}(\gamma_i g,s)\phi(g)\,dg}.\]
The same manipulations used to check the vanishing of $I_{f_i'}(\phi,s)$ show that the integrals $J_{f_1'}(\phi,s)$ and $J_{f_2'}(\phi,s)$ vanish by the cuspidality of $\phi$.  The unfolding of $J_{f,2}(\phi,s)$ is the same as when $L$ is a field, so the proposition follows.
\end{proof}

\subsection{Non-unique models} \label{subsec:nonuniquemodels}
The Fourier coefficient $\phi_\chi$ does not factorize for general cusp forms $\phi$, so it does not follow from the unfolding calculation that the integrals $I$ and $J$ have the structure of an Euler product.  However, Piatetski-Shapiro and Rallis in \cite{psr} introduced a technique that can sometimes factorize the global integral in this situation.

Suppose $(\pi_v,V)$ is an irreducible admissible representation of $\GSp_4(\mathbf{Q}_v)$.  Define a $(U,\chi)$-functional to be a linear map $\ell: V \rightarrow \mathbf{C}$ that satisfies $\ell(\pi(u) v) = \chi(u) \ell(v)$ for all $u \in U(\mathbf{Q}_v)$ and $v \in V$.  If one replaced $U$ in this definition by $U_B$, the unipotent radical of a Borel of $\GSp_4$, and $\chi$ by a nondegenerate character of $U_B$, then one would get the definition of the Whittaker functional.  However, unlike the Whittaker functional, the space of $(U,\chi)$-functionals will usually be infinite dimensional.  Consequently, the Fourier coefficient $\phi_\chi$ will not factorize.  However, the following theorem allows one to recover the Euler product nature of the global integral.

\begin{theorem}\label{KS:unram} Suppose that $p$ is finite, $\pi_p$ is unramified, $\Phi_p$ is the characteristic function of $W_4(\Z_p)$, and $\alpha_p$ is the characteristic function of $V_5(\mathbf{Z}_p)$.  Let $v_0$ denote a nonzero spherical vector in $\pi_p$.  Then, for all $(U,\chi)$-functionals $\ell$,
\[I_{p}(\ell,s) \triangleq \int_{U(\mathbf{Q}_p)Z(\mathbf{Q}_p)\backslash \GSp_4(\mathbf{Q}_p)}{f_p^{\Phi_p}(g,s)\alpha_{\chi,p}(g) \ell(\pi(g)v_0) \,dg} = \ell(v_0)L(\pi_p,\Spin,2s-\frac{1}{2})\]
for $\mathrm{Re}(s)$ sufficiently large.  Similarly,
\[J_{p}(\ell,s) \triangleq \int_{N(\mathbf{Q}_p)Z(\mathbf{Q}_p)\backslash\GL_{2,L}^*(\mathbf{Q}_p)}{f_p^{\Phi_p}(g,s)\ell(\pi(g) v_0) \,dg} = \ell(v_0)L(\pi_p,\Spin,2s-\frac{1}{2})\]
for $\mathrm{Re}(s)$ sufficiently large.
\end{theorem}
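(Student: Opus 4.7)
Proof proposal.

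The argument follows the Piatetski-Shapiro--Rallis strategy \cite{psr} for Rankin-Selberg integrals that unfold to non-unique models.

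The first step is to rewrite $I_p$ in a form parallel to $J_p$. Substituting the definition (\ref{eqn:alphachivdef}) of $\alpha_{\chi,p}$ into $I_p$ and using the identities $f_p^{\Phi_p}(ug,s) = f_p^{\Phi_p}(g,s)$ (from (\ref{f_pdef}) together with $f_2 u = f_2$ for $u\in U$) and $\ell(\pi(ug)v_0) = \chi(u)\ell(\pi(g)v_0)$, the characters $\chi$ cancel under the iterated-integration bijection $(N(\mathbf{Q}_p)\backslash U(\mathbf{Q}_p))\times(U(\mathbf{Q}_p)Z(\mathbf{Q}_p)\backslash \GSp_4(\mathbf{Q}_p))\leftrightarrow N(\mathbf{Q}_p)Z(\mathbf{Q}_p)\backslash \GSp_4(\mathbf{Q}_p)$, $(u,g)\mapsto ug$, yielding
\[I_p(\ell,s) = \int_{N(\mathbf{Q}_p)Z(\mathbf{Q}_p)\backslash \GSp_4(\mathbf{Q}_p)} f_p^{\Phi_p}(g,s)\,\alpha_p(v_Dg)\,\ell(\pi(g)v_0)\,dg.\]
Now $I_p$ and $J_p$ are integrals of the same basic integrand $f_p^{\Phi_p}(g,s)\ell(\pi(g)v_0)$ over $NZ$-cosets, with $I_p$ ranging over all of $\GSp_4$ weighted by the support-cutting factor $\alpha_p(v_Dg)$, and $J_p$ restricted to $\GL_{2,L}^* = \mathrm{Stab}(v_D)$ (on which $\alpha_p(v_Dg)$ is identically $1$ at the points that contribute).

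The second step is to reduce each integral to a sum over the torus. Apply the Iwasawa decompositions $\GSp_4(\mathbf{Q}_p) = B(\mathbf{Q}_p)K_p$ with respect to the standard Borel $B = TU_B$ (for $I_p$), and $\GL_{2,L}^*(\mathbf{Q}_p) = B_L(\mathbf{Q}_p)(\GL_{2,L}^*(\mathbf{Q}_p) \cap K_p)$ with $B_L = NT_L$ (for $J_p$). Since $v_0$ is $K_p$-spherical and $f_p^{\Phi_p}$, $\alpha_p$ are right $K_p$-invariant (the latter using $K_p$-stability of $V_5(\mathbf{Z}_p)$), the integrand is right $K_p$-invariant; thus the integral reduces to a discrete sum over dominant cocharacters of $Z(\mathbf{Q}_p)\backslash T(\mathbf{Q}_p)$ (respectively $Z(\mathbf{Q}_p)\backslash T_L(\mathbf{Q}_p)$), weighted by the inverse modulus, with the characteristic function $\alpha_p(v_Dt)$ restricting further to a sublattice in the $I_p$ case.

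The third step is the evaluation on the torus. The section $f_p^{\Phi_p}(t,s)$ is computed directly from (\ref{f_pdef}) as an elementary Tate-type integral in terms of torus coordinates. The pairing $\ell(\pi(t)v_0)$ is where the non-uniqueness of the $(U,\chi)$-model must be addressed. Since $\chi|_N$ is trivial, $\ell$ factors through the $N$-coinvariants of $\pi_p$; for $t$ in the dominant cone, the image of $\pi(t)v_0$ in the full Jacquet module of $\pi_p$ is controlled by the Satake parameter, and a Hecke-algebra argument forces the "spherical" component of $\ell$ on this image to be a fixed scalar multiple of $\ell(v_0)$. Summing the resulting geometric series over the dominant cone and applying a character identity (essentially the Cauchy identity) for the Spin representation of $\GSp_4(\mathbf{C})$ then identifies the sum as $\ell(v_0)\,L(\pi_p,\mathrm{Spin},2s-\tfrac{1}{2})$; convergence for $\mathrm{Re}(s)$ large is automatic from the geometric convergence.

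The main obstacle is the third step. In the unique-model case (Whittaker or Bessel), the Casselman-Shalika formula makes $\ell(\pi(t)v_0)$ explicit; here the non-uniqueness of the $(U,\chi)$-model means one must combine a Jacquet-module / Hecke-algebra rigidity argument with the specific support properties of $\alpha_p(v_Dt)$ to extract a universal proportionality to $\ell(v_0)$. The final identification of the resulting rational function as $L(\pi_p,\mathrm{Spin},2s-\tfrac{1}{2})$ via character identities on $\GSp_4(\mathbf{C})$ is a delicate bookkeeping step, especially in the $L$-split case $D = 1$ where the coset analysis of Proposition \ref{unfoldProp} may interact nontrivially with the unramified computation.
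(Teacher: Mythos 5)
Your step 1 (collapsing the $N(\mathbf{Q}_p)\backslash U(\mathbf{Q}_p)$-integral defining $\alpha_{\chi,p}$ with the outer $U Z\backslash\GSp_4$-integral to put $I_p$ in the same shape as $J_p$, with $\alpha_p(v_D g)$ as a cutoff) is correct and is implicit in the paper's manipulation. The proposal goes wrong at step 3, and the error is precisely at the point you flag as "the main obstacle."

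You assert that a Hecke-algebra rigidity argument forces $\ell(\pi(t) v_0)$, for $t$ in the dominant cone, to equal a fixed (in $\ell$) scalar multiple of $\ell(v_0)$. This is false, and in fact cannot be repaired: since the $(U,\chi)$-model is non-unique, there exist nonzero $(U,\chi)$-functionals $\ell$ with $\ell(v_0)=0$. For such an $\ell$ your claim would give $\ell(\pi(t)v_0)=0$ for all dominant $t$; combined with the $(U,\chi)$-equivariance, the right $K_p$-invariance of $v_0$, and the Iwasawa decomposition, this would force $\ell(\pi(g)v_0)=0$ for all $g$, hence $\ell=0$ by irreducibility — a contradiction. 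So there is no Casselman--Shalika-type formula for $\ell(\pi(t)v_0)$ here, and your planned reduction to a torus sum followed by a character/Cauchy identity cannot get off the ground. The subtlety is exactly that the theorem asserts vanishing of the \emph{integral} when $\ell(v_0)=0$, not vanishing of the individual integrand values $\ell(\pi(t)v_0)$.

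What the Piatetski-Shapiro--Rallis rigidity actually gives (and what the paper uses as identity (\ref{ell_Omega})) is the $K$-averaged statement
\[
\int_{\GSp_4(\mathbf{Z}_p)}\ell(\pi(k g) v_0)\,dk = \ell(v_0)\,\Omega_p(g),
\]
where $\Omega_p$ is the Macdonald spherical function: the $K$-average of the generalized matrix coefficient is pinned down, not the unaveraged torus values. To exploit this one must \emph{re-fold}: rather than descending to a torus sum, the paper shows (Lemmas \ref{lem:nonvanishingconds}--\ref{lem:finalIcalc}, together with Proposition \ref{KS:alphachi}) that the unfolded local integral $(1-\omega_\pi(p)p^{2-2s}) I_p(\ell,\tfrac{s-1}{2})$ equals $\int_{\GSp_4(\mathbf{Q}_p)}\Delta^s(g)\,\ell(\pi(g)v_0)\,dg$, with $\Delta^s(g)=|\nu(g)|^s\charf(g\in M_4(\mathbf{Z}_p))$ a \emph{bi-$K_p$-invariant} kernel. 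The left $K_p$-invariance of $\Delta^s$ then permits inserting the $K$-average, after which $\ell$ enters only through $\ell(v_0)$, and Proposition \ref{omega_p} (the classical Shimura computation relating $\int\Delta^s\Omega_p$ to the Spin $L$-function) finishes the job. The technical heart of the proof is therefore the combinatorial matching of the integrality conditions appearing in $\Delta_0$ (coming from $\alpha_{\chi,p}$) with those of $\Delta^s$ after the $U$-integration — this is absent from your proposal, and the step 3 you propose in its place does not work.

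Your aside about the split case $D=1$ is a non-issue at an unramified prime: the unramified calculation is purely local and the splitting of $L$ at $p$ only changes the explicit solutions of the congruences in $\Delta_0'$, which the paper's proof handles uniformly.
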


We will prove this theorem in the next section.  Note that when $\ell(v_0) = 0$, Theorem \ref{KS:unram} says that the integrals $I_p(\ell,s)$ and $J_p(\ell,s)$ appearing in the theorem are $0$.

For a finite set of places $\Omega$, define
\begin{equation}\label{IOmega} I_{\Omega}(\phi,s) \triangleq \int_{U(\mathbf{A}_\Omega)Z(\mathbf{A}_\Omega)\backslash \GSp_4(\mathbf{A}_\Omega)}{f_\Omega^{\Phi}(g,s)\alpha_{\chi,\Omega}(g) \phi_\chi(g) \,dg},\end{equation}
where for a linear algebraic group $H$, we write $H(\mathbf{A}_\Omega) = \prod_{v \in \Omega}{H(\mathbf{Q}_v)}$, and where $f_\Omega$ and $\alpha_{\chi,\Omega}$ respectively denote the product of the factors of $f^\Phi$ and $\alpha_\chi$ at the places in $\Omega$.  We now apply the argument of the Basic Lemma of \cite[pg. 117]{psr} to deduce the following corollary.  Write $H: \pi \rightarrow \mathbf{C}$ for the functional $\phi \mapsto \phi_\chi(1)$. 
\begin{corollary}\label{partialL} Suppose that the cusp form $\phi$ corresponds to a factorizable element under some choice of isomorphism $\pi \cong \otimes_p \pi_p$.  Suppose that $S$ is a finite set of places of $\Q$ containing all the places excluded in the statement of Theorem \ref{KS:unram}.  That is, if $p \notin S$, then $p$ is finite, $\Phi_p$ is the characteristic function of $W_4(\Z_p)$, $\alpha_p$ is the characteristic function of $V_5(\mathbf{Z}_p)$, and $\pi_p$ is spherical.  Then if $\phi$ is invariant under $\GSp_4(\Z_p)$ for all $p \notin S$, $I(\phi,s) = I_S(\phi, s)L^S(\pi,\Spin,2s-\frac{1}{2})$.\end{corollary}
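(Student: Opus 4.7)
The plan is to apply the Basic Lemma of Piatetski-Shapiro and Rallis \cite[pg. 117]{psr} that was outlined in Section \ref{subsec:nonuniquemodels}. Starting from the unfolded expression
\[I(\phi,s) = \int_{U(\A)Z(\A)\backslash \GSp_4(\A)} f^{\Phi}(g,s)\,\alpha_\chi(g)\,\phi_\chi(g)\,dg\]
of Proposition \ref{unfoldProp}, observe that the measure together with $f^\Phi$ and $\alpha_\chi$ all factor as products over places. The obstacle is that $\phi_\chi$ itself does not factorize, since the $(U,\chi)$-model is not unique. The point is to exchange this non-unique global functional for the family of local $(U,\chi_p)$-functionals that Theorem \ref{KS:unram} can be applied to.

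First, use Fubini (valid in a right half-plane, as one checks from the rapid decay of $\phi_\chi$ and standard majorization of the Eisenstein-series data) to write $I(\phi,s)$ as an outer integral over $\GSp_4(\A_S)$ of $f_S^\Phi(g_S,s)\alpha_{\chi,S}(g_S)$ against an inner integral over $\GSp_4(\A^S)$. Using the factorization $\pi \cong \pi_S \otimes \pi^S$ and $\phi = \phi_S \otimes \phi^S$ with $\phi^S = \otimes_{p\notin S} v_{0,p}$ (this tensor decomposition being forced by the hypothesis that $\phi$ is $\GSp_4(\Z_p)$-invariant for $p \notin S$), fix $g_S \in \GSp_4(\A_S)$ and introduce the linear functional
\[\ell_{g_S} : \pi^S \to \C, \qquad \ell_{g_S}(v^S) = H\bigl(\pi_S(g_S)\phi_S \otimes v^S\bigr).\]
The equivariance $H(\pi(u)\cdot) = \chi(u) H(\cdot)$ for $u \in U(\A)$ implies that for each $p \notin S$, if we fix all tensor factors other than the $p$-th as spherical vectors and let only the $p$-component vary, we obtain a $(U,\chi_p)$-functional on $\pi_p$.

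Next, peel off unramified places one at a time. For each $p \notin S$, apply Theorem \ref{KS:unram} to the $p$-component of the inner integral using the $(U,\chi_p)$-functional just constructed: the integral over $\GSp_4(\Q_p)$ produces a factor of $L(\pi_p,\Spin,2s-\tfrac{1}{2})$ and replaces the $p$-th tensor factor in the argument of $\ell_{g_S}$ by the spherical vector $v_{0,p}$. Iterating over all $p \notin S$, the inner integral collapses to $L^S(\pi,\Spin,2s-\tfrac{1}{2})\cdot H(\pi_S(g_S)\phi) = L^S(\pi,\Spin,2s-\tfrac{1}{2})\cdot \phi_\chi(g_S)$. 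Substituting back into the outer integral recovers $I_S(\phi,s)$ multiplied by $L^S(\pi,\Spin,2s-\tfrac{1}{2})$, which is the desired identity.

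The main obstacle is bookkeeping the convergence during the peeling procedure: each intermediate partial integral must converge absolutely in a right half-plane so that the interchange with the remaining unramified integrations is legitimate. This is standard but requires care, since we are iterating Theorem \ref{KS:unram} over an infinite set of unramified primes and collecting a product of local $L$-factors. A secondary subtlety is to verify that the global factorizations of $f^\Phi$ and $\alpha_\chi$ line up correctly with the chosen isomorphism $\pi \cong \otimes_p \pi_p$, so that the local identity of Theorem \ref{KS:unram} applies verbatim at each $p \notin S$ with precisely the test functions $\Phi_p$ and $\alpha_p$ prescribed there.
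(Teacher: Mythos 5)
Your proposal is correct and follows essentially the same route as the paper: both reduce the global integral to an Euler product by exchanging the non-unique global functional $H$ for local $(U,\chi_p)$-functionals and then applying Theorem~\ref{KS:unram} place by place. The only cosmetic difference is bookkeeping — the paper organizes the peeling as a direct limit $I(\phi,s)=\varinjlim_{\Omega\supseteq S}I_\Omega(\phi,s)$ and proves the single inductive step $I_{\Omega\cup\{p\}}=I_\Omega\cdot L(\pi_p,\Spin,2s-\tfrac12)$ using the functional $\ell(v)=H(\pi(g)\iota_p(v))$ on a single local factor $V_{\pi_p}$, whereas you split $\A$ into $\A_S\times\A^S$ up front and iterate over $p\notin S$; the two formulations are logically equivalent and justified by the same absolute-convergence estimate in a right half-plane.
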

\begin{proof} The argument we use is well-known: see \cite[Theorem 1.2]{bfg} or \cite[pg.\ 157]{gs}.  For the reader's convenience, we sketch the proof. 

Let $\Omega$ denote a finite set of places of $\Q$ containing $S$.  Then it is the definition of the adelic integral on the right-hand side of (\ref{unfoldI}) that
\[I(\phi,s) = \varinjlim_{\Omega \supseteq S} I_\Omega(\phi,s)\]
in the range of absolute convergence of the integral.  Now, 
\begin{align}\label{BLarg} I_{\Omega \cup \{p\}}(\phi,s) &= \int_{U(\mathbf{A}_{\Omega \cup \{p\}})Z(\mathbf{A}_{\Omega \cup \{p\}})\backslash \GSp_4(\mathbf{A}_{\Omega \cup \{p\}})}{f_{\Omega \cup \{p\}}^{\Phi}(g,s)\alpha_{\chi,{\Omega \cup \{p\}}}(g) \phi_\chi(g) \,dg} \nonumber \\ &= \int_{U(\mathbf{A}_\Omega)Z(\mathbf{A}_\Omega)\backslash \GSp_4(\mathbf{A}_\Omega)}{f_\Omega^{\Phi}(g,s)\alpha_{\chi,\Omega}(g)} \nonumber \\ & \times \left( \int_{U(\Q_p)Z(\Q_p)\backslash \GSp_4(\Q_p)}{f_p^{\Phi_p}(g_p,s)\alpha_{\chi,p}(g_p)\phi_\chi(g_p g)\,dg_p}\right)\,dg \end{align}
since $f^\Phi$ and $\alpha_\chi$ are factorizable.

Fix $g$ in $\GSp_4(\A_\Omega)$ and denote by $V_{\pi_p}$ the space of the representation $\pi_p$.  Since $\phi$ corresponds to a factorizable element of the space of $\pi$ under some choice of isomorphism $\pi \cong \otimes_p \pi_p$ and is $\GSp_4(\mathbf{Z}_p)$-invariant, there is a spherical vector $v_0$ of $V_{\pi_p}$ and a $\pi_p$-equivariant map $\iota_p$ from $V_{\pi_p}$ into the space of cusp forms on $\GSp_4(\A)$ that sends $v_0$ to $\phi$. 

Consider the functional $\ell: V_{\pi_p} \rightarrow \C$ given by $\ell(v) = H(\pi(g) \iota_p(v))$.  Since the elements in $\GSp_4(\A_\Omega)$ and $\GSp_4(\Q_p)$ commute, $\ell$ is a $(U,\chi)$-functional on $V_{\pi_p}$.  Furthermore, $\phi_\chi(g_p g) = \ell(\pi(g_p) v_0)$.  Hence by Theorem \ref{KS:unram}, the inner integral in (\ref{BLarg}) is equal to $L(\pi_p,\Spin,2s -\frac{1}{2}) \ell(v_0)$.  But $\ell(v_0) = \phi_\chi(g)$, so $I_{\Omega \cup \{p\}}(\phi,s) = I_{\Omega}(\phi,s) L(\pi_p,\Spin, 2s-\frac{1}{2})$.  Taking the limit over $\Omega \supseteq S$ yields
\[I(\phi,s)= I_S(\phi,s) L^S(\pi,\Spin, 2s -\frac{1}{2})\]
as desired.\end{proof}

For a finite prime $p$, denote by $\mathcal{H}_p$ the Hecke algebra of locally constant, compactly supported functions on $\GSp_4(\Q_p)$.  The integration over the bad finite places in $I_S$ may be controlled using the following proposition.
\begin{proposition}\label{KS:ram} For any $v_p$ in $V_{\pi_p}$, there exists a Schwartz function $\Phi_p$ on $W_4(\Q_p)$, a Schwartz function $\alpha_p$ on $V_5(\mathbf{Q}_p)$, and an element $\beta_p$ in $\mathcal{H}_p$ such that for all $(U,\chi)$-functionals $\ell$,
\[\int_{U(\mathbf{Q}_p)Z(\mathbf{Q}_p)\backslash \GSp_4(\mathbf{Q}_p)}{f_p^{\Phi_p}(g,s)\alpha_{\chi,p}(g) \ell(\pi(g)(\beta_p * v_p)) \,dg} = \ell(v_p).\]
\end{proposition}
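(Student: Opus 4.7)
The plan is to construct $\Phi_p$, $\alpha_p$, and $\beta_p$ that concentrate the integrand $F(g)\ell(\pi(g)(\beta_p * v_p))$, with $F(g) := f_p^{\Phi_p}(g,s)\alpha_{\chi,p}(g)$, as a ``delta function at the identity coset'' of $U(\mathbf{Q}_p)Z(\mathbf{Q}_p)\backslash\GSp_4(\mathbf{Q}_p)$, so that the integral reads off $\ell(v_p)$.

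First, using smoothness of $\pi_p$, pick an open compact subgroup $K' \subseteq \GSp_4(\mathbf{Q}_p)$ fixing $v_p$ and set $\beta_p = \mathrm{vol}(K')^{-1}\mathbf{1}_{K'}$; then $\beta_p * v_p = v_p$ and $\pi(k)(\beta_p * v_p) = v_p$ for all $k \in K'$. For any choice of $\Phi_p$ and $\alpha_p$, the product $F$ automatically satisfies $F(uzg) = \chi^{-1}(u)F(g)$ for $u \in U(\mathbf{Q}_p),\ z \in Z(\mathbf{Q}_p)$: the $UZ$-invariance of $f_p^{\Phi_p}$ follows from $U \subseteq Q$ with $\delta_Q$ trivial on $U$ and $Z$, while the transformation of $\alpha_{\chi,p}$ under $U$ is its defining property (and its $Z$-invariance comes from the twist by $\nu^{-1}$ in the definition of $V_5$, which makes $Z$ act trivially). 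Combined with the $(U,\chi)$-equivariance of $\ell$ and the trivial central character of $\pi$, the integrand descends to a function on $U(\mathbf{Q}_p)Z(\mathbf{Q}_p)\backslash\GSp_4(\mathbf{Q}_p)$.

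The main step is to choose $\Phi_p$ and $\alpha_p$ so that $F$ is a ``bump'' supported in the image of $K'$ with total mass one. Take $\Phi_p$ to be the characteristic function of a narrow neighborhood of $f_2 \in W_4(\mathbf{Q}_p) \setminus \{0\}$; this makes $f_p^{\Phi_p}(g,s)$ supported in a small neighborhood $Q(\mathbf{Q}_p)K_0$ of $Q(\mathbf{Q}_p)$. Using the formula $\alpha_{\chi,p}(g) = \int_{\mathbf{Q}_p}\psi(z)\alpha_p((v_D + zf_2 \wedge f_1)g)\,dz$ from the preceding lemma, take $\alpha_p$ supported in a narrow neighborhood of $v_D$; this concentrates $\alpha_{\chi,p}$ near $\GL_{2,L}^*(\mathbf{Q}_p) N_{24}(\mathbf{Q}_p)$, where $N_{24} \subseteq U$ is the $1$-parameter subgroup $\{n_{24}(z)\}$ along which $v_D$ translates by $f_2 \wedge f_1$. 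The product $F$ is then supported near $B'(\mathbf{Q}_p) N_{24}(\mathbf{Q}_p) K_0$, where $B' = Q \cap \GL_{2,L}^*$, and modulo $U(\mathbf{Q}_p)Z(\mathbf{Q}_p)$ this collapses to a $1$-dimensional thickening of the identity coset along the torus $B'/(B' \cap UZ)$. To kill this residual torus direction, exploit the partial Fourier transform structure of the $z$-integral: take $\alpha_p$ additionally band-limited in the dual coordinate so that $\alpha_{\chi,p}$ becomes compactly supported in the scaling parameter $\epsilon_1/\epsilon_4$. A final rescaling of $\Phi_p$ and $\alpha_p$ gives $F$ with the prescribed support and unit mass.

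With such $F$, for $g = uzk \in U(\mathbf{Q}_p)Z(\mathbf{Q}_p)K'$ the integrand equals $\chi^{-1}(u)F(k) \cdot \chi(u)\ell(v_p) = F(k)\ell(v_p)$, and integrating over the image of $K'$ gives $\ell(v_p)$. The main obstacle is the construction in the third paragraph: simultaneously cutting off the ``modulo $Q$'' direction with $\Phi_p$ and both the stabilizer direction and the residual torus direction with $\alpha_p$ requires a careful, but essentially standard, analysis of the Godement map $\Phi \mapsto f^\Phi$ together with the partial Fourier transform $\alpha \mapsto \alpha_\chi$.
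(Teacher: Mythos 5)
Your strategy --- concentrate the integrand near the identity coset and read off $\ell(v_p)$ --- is broadly the right one, but the proposal as written has a genuine gap in the third paragraph, and it stems from choosing the wrong object to carry the Fourier smoothing. You set $\beta_p$ to be the projector onto $K'$-invariants, so $\beta_p * v_p = v_p$ and the Hecke operator does no work; all the concentration must then be achieved by $\alpha_p$ alone. You then ask for $\alpha_p$ to be ``supported in a narrow neighborhood of $v_D$'' \emph{and} ``band-limited in the dual coordinate'' along the $f_2 \wedge f_1$ direction. Over $\mathbf{Q}_p$ these two requirements are incompatible by the uncertainty principle: a function supported on $p^N\mathbf{Z}_p$ in the $z$-coordinate has $p^{-N}\mathbf{Z}_p$-invariant Fourier transform, which cannot be compactly supported. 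Concretely, one computes $\alpha_{\chi,p}(t_y) = |y|\,\hat\gamma(y)$ for $t_y = \mathrm{diag}(y,y,1,1)$, where $\gamma(z) = \alpha_p(v_D + z f_2\wedge f_1)$; making $\hat\gamma$ compactly supported near $y=1$ forces $\gamma$ (hence $\alpha_p$) to be \emph{wide} in the $f_2\wedge f_1$ direction, not narrow. The sentence ``a final rescaling of $\Phi_p$ and $\alpha_p$ gives $F$ with the prescribed support and unit mass'' papers over this, and in fact it is precisely here that the main content of the proof lies.

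The paper resolves this by putting the Fourier smoothing into the Hecke operator rather than into $\alpha_p$. One first takes $\alpha_p$ so that $\alpha_p(v_D g) = \mathbf{1}(g \in \GL_{2,L}^* K_N)$, which collapses the $I_p$ integral to the $J_p$ integral over $N(\mathbf{Q}_p)Z(\mathbf{Q}_p)\backslash\GL_{2,L}^*(\mathbf{Q}_p)$ (Proposition~\ref{KS:ram1}). Then, rather than leaving $v_p$ fixed, one forms the smoothed vector $v_1 = \int \eta(z)\,\pi(u(z))\,v_p\,dz$ with $\eta(z) = \psi(-z)|p|^N\,\mathbf{1}(p^N z \in \mathbf{Z}_p)$, so that $\ell(\pi(t_y) v_1) = \hat\eta(y)\,\ell(\pi(t_y) v_p)$ with $\hat\eta = \mathbf{1}(1 + p^N\mathbf{Z}_p)$. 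The factor $\hat\eta(y)$ kills the residual $T'$-direction exactly as you want, but there is no tension because $\eta$ lives on the group, not on $V_5$. Finally, $\beta_p$ is chosen (nontrivially) so that $\beta_p * v_p = C^{-1} v_1$. If you want to salvage your direct approach, you would need to abandon the ``narrow neighborhood of $v_D$'' requirement in the $f_2\wedge f_1$ direction, replace it with a phase-twisted wide cutoff there, and re-do the support analysis of $\alpha_{\chi,p}$ in that regime; this is possible but amounts to the same computation in different clothes, and the paper's device of moving the smoothing to $\beta_p$ is cleaner.
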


The proposition is proved in Section \ref{ramified}. One also has the analogous statement for the integral $J$.  As in Corollary \ref{partialL}, one can use this proposition to control $I_S(\phi,s)$.

\begin{corollary}\label{badCor} Suppose the cusp form $\phi$ and the finite set of places $S$ are as in Corollary \ref{partialL}.  Then there exists $\beta = \otimes_p{\beta_p}$ in $\otimes_{p \in S\setminus \{\infty\}}\mathcal{H}_p$ together with $\Phi_p$ and $\alpha_p$ for all $p \in S \setminus \{\infty\}$ such that $I_S(\beta*\phi,s) = I_\infty(\phi,s)$.\end{corollary}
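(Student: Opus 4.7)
The plan is to peel off the finite places in $S_f := S \setminus \{\infty\}$ one at a time, applying Proposition \ref{KS:ram} at each step in the same spirit as the proof of Corollary \ref{partialL}.

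To set up, fix an isomorphism $\pi \cong \otimes_q \pi_q$ and a factorizable vector $v = \otimes_q v_q$ corresponding to $\phi$. For each $p \in S_f$, apply Proposition \ref{KS:ram} to $v_p$ to obtain $\beta_p \in \mathcal{H}_p$, together with $\Phi_p \in \mathcal{S}(W_4(\mathbf{Q}_p))$ and $\alpha_p \in \mathcal{S}(V_5(\mathbf{Q}_p))$, satisfying its conclusion, and set $\beta = \otimes_{p \in S_f} \beta_p$. Enumerate $S_f = \{p_1, \ldots, p_k\}$, put $S_j = S \setminus \{p_1, \ldots, p_j\}$, and let $\phi^{(j)}$ be the cusp form corresponding to the pure tensor having $v_{p_i}$ in the $p_i$-slot for $i \leq j$, $\beta_{p_i} * v_{p_i}$ in the $p_i$-slot for $i > j$, and $v_q$ at each place $q \notin S_f$, so that $\phi^{(0)} = \beta * \phi$ and $\phi^{(k)} = \phi$. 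The plan is to prove by induction on $j$ that $I_S(\beta * \phi, s) = I_{S_j}(\phi^{(j)}, s)$; the case $j = k$ is the corollary.

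For the inductive step from $j$ to $j+1$, set $p = p_{j+1}$ and use the factorizations of $f^\Phi$ and $\alpha_\chi$ to peel off the $\GSp_4(\mathbf{Q}_p)$-integration as the innermost integral inside $I_{S_j}(\phi^{(j)}, s)$. With $g$ fixed in the outer variable ranging over $U(\mathbf{A}_{S_{j+1}}) Z(\mathbf{A}_{S_{j+1}}) \backslash \GSp_4(\mathbf{A}_{S_{j+1}})$, consider the functional $\ell_g: V_{\pi_p} \to \mathbf{C}$ given by $\ell_g(u) = H(\pi(g) \iota_p(u))$, where $\iota_p: V_{\pi_p} \to \pi$ is the embedding coming from the tensor product decomposition $\pi \cong \pi_p \otimes \pi^{(p)}$, with the vector in $\pi^{(p)}$ chosen so that $\iota_p(\beta_p * v_p) = \phi^{(j)}$. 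Exactly as in the proof of Corollary \ref{partialL}, the commutativity of $\GSp_4(\mathbf{Q}_p)$ and $\GSp_4(\mathbf{A}_{S_{j+1}})$ inside $\GSp_4(\mathbf{A})$ shows that $\ell_g$ is a $(U, \chi)$-functional on $V_{\pi_p}$ and that $\phi^{(j)}_\chi(g_p g) = \ell_g(\pi_p(g_p)(\beta_p * v_p))$. Applying Proposition \ref{KS:ram} to $\ell_g$ and $v_p$ then collapses the inner integral to $\ell_g(v_p) = H(\pi(g) \iota_p(v_p)) = \phi^{(j+1)}_\chi(g)$, using that $\iota_p(v_p) = \phi^{(j+1)}$ by construction. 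This completes the induction.

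The main content has been packaged into Proposition \ref{KS:ram}; the chief subtlety in the argument above is bookkeeping, i.e., arranging the embeddings $\iota_p$ so that the intermediate cusp forms $\phi^{(j)}$ match up correctly at each stage. One also needs absolute convergence of each intermediate integral, which is inherited from that of $I_S(\beta * \phi, s)$ in the relevant half-plane of $s$.
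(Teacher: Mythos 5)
Your proof is correct and follows the same strategy as the paper: peel off the finite places of $S$ one at a time, at each step using a $(U,\chi)$-functional built from $H$ and the tensor decomposition to reduce the inner $\GSp_4(\mathbf{Q}_p)$-integral to a single evaluation via Proposition \ref{KS:ram}. The only (harmless) cosmetic difference is that you absorb the already-applied Hecke operators into the choice of embedding $\iota_p$ and the intermediate forms $\phi^{(j)}$, whereas the paper keeps $\iota_p(v_p)=\phi$ fixed and places $\beta_\Omega$ inside the functional $\ell(v) = H(\pi(g)\beta_\Omega * \iota_p(v))$.
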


\begin{proof} Since $\phi$ corresponds to a factorizable element under some choice of isomorphism $\pi \cong \otimes_p\pi_p$, for each $p$ in $S \setminus\{\infty\}$ there exists a $\pi_p$-equivariant map from $V_{\pi_p}$ into the space of cusp forms on $\GSp_4(\A)$ and an element $v_p$ in $V_{\pi_p}$ such that $\iota_p(v_p) = \phi$.  Let $\beta_p$, $\Phi_p$, and $\alpha_p$ be functions that satisfy the conclusion of Proposition \ref{KS:ram} for this $v_p$.

Suppose that $\Omega$ is a finite set of places contained in $S$ and containing $\infty$, and set $\beta_\Omega = \prod_{v \in \Omega \setminus \{\infty\}}{\beta_v}$.  We will show $I_{\Omega \cup \{p\}}(\beta_{\Omega \cup \{p\}} *\phi,s) = I_{\Omega}(\beta_{\Omega} *\phi,s)$.  Granted this equality, the corollary follows by descending induction on $\Omega$.

As in the proof of Corollary \ref{partialL}, one has
\begin{align}\label{BLarg:ram} I_{\Omega \cup \{p\}}(\beta_{\Omega \cup \{p\}} *\phi,s) &= \int_{U(\mathbf{A}_\Omega)Z(\mathbf{A}_\Omega)\backslash \GSp_4(\mathbf{A}_\Omega)}{f_\Omega^{\Phi}(g,s)\alpha_{\chi,\Omega}(g)} \nonumber \\ & \times \left( \int_{U(\Q_p)Z(\Q_p)\backslash \GSp_4(\Q_p)}{f_p^{\Phi_p}(g_p,s)\alpha_{\chi,p}(g_p)(\beta_{\Omega \cup \{p\}} *\phi)_\chi(g_p g)\,dg_p}\right)\,dg.\end{align}

Fix $g \in \GSp_4(\A_\Omega)$.  Define a functional $\ell: V_{\pi_p} \rightarrow \C$ by $\ell(v) = H(\pi(g) \beta_\Omega * \iota_p(v))$.  Then $\ell$ is a $(U,\chi)$-functional on $V_{\pi_p}$ and $(\beta_{\Omega \cup \{p\}} *\phi)_\chi(g_p g) = \ell( \pi_p(g_p) \beta_p * v_p)$.  Since $\beta_p$, $\Phi_p$, and $\alpha_p$ satisfy the conclusion of Proposition \ref{KS:ram}, the inner integral in (\ref{BLarg:ram}) is $\ell(v_p) = (\beta_\Omega * \phi)_\chi(g)$.  Hence $I_{\Omega \cup \{p\}}(\beta_{\Omega \cup \{p\}} *\phi,s) = I_{\Omega}(\beta_{\Omega} *\phi,s)$, completing the proof.\end{proof}

It follows immediately from Corollary \ref{partialL} and Corollary \ref{badCor} that the data $(\Phi,\phi', \alpha)$ for the integral may be chosen so that
\[I(\phi',s) = I_\infty(\phi,s)L^S(\pi,\Spin,2s-\frac{1}{2}).\]

We also verify in Section \ref{ramified} that the integral $J(\phi,s)$ can be made non-vanishing for any cuspidal automorphic representation.  Namely, we prove the following proposition.
\begin{proposition}\label{KS:arch}
Suppose that $\phi = v_\infty \otimes v_f$ is factorizable under some choice of factorization $\pi \cong \pi_\infty \otimes \pi_f$ and that $H(\phi) \neq 0$.  Then there is a smooth vector $v_\infty'$ in the space of $\pi_\infty$ and a Schwartz function $\Phi_\infty$ in $\mathcal{S}(W_4(\R))$ such that if $\phi' = v_\infty' \otimes v_f$, then the Archimedean integral
\begin{equation}\label{eqn:archint} J_\infty(\phi',s) = \int_{N(\mathbf{R})Z(\mathbf{R})\backslash\GL_{2,L}^*(\mathbf{R})} H(\pi(g)\phi')f_\infty^{\Phi_\infty}(g,s)dg \end{equation}
is not identically zero.
\end{proposition}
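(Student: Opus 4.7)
The plan is a bump-function argument. Define $\ell_\infty : \pi_\infty \to \C$ by $\ell_\infty(v) := H(v \otimes v_f)$; since $H$ is a $(U,\chi)$-functional and the $U_\infty$-action preserves the tensor factor $v_f$, $\ell_\infty$ is a $(U_\infty,\chi_\infty)$-functional with $\ell_\infty(v_\infty) = H(\phi) \neq 0$. Using $\chi|_N = 1$ and the trivial central character, the matrix coefficient $F(g) := \ell_\infty(\pi_\infty(g)v_\infty')$ descends to $N(\R)Z(\R)\backslash \GL_{2,L}^*(\R)$, giving
\[J_\infty(\phi',s) = \int_{N(\R)Z(\R)\backslash \GL_{2,L}^*(\R)} F(g)\, f_\infty^{\Phi_\infty}(g,s)\, dg.\]
I first smooth $v_\infty$ by setting $v_\infty' := \pi_\infty(\beta)v_\infty$ for a sufficiently concentrated bump $\beta \in C_c^\infty(\GL_{2,L}^*(\R))$ with $\int\beta = 1$, so that $F$ is smooth and $F(1)\neq 0$. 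Rescaling $v_\infty'$ by a complex scalar, I may assume $\operatorname{Re} F > 0$ on an open neighborhood $V$ of the identity coset.

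Next, I choose $\Phi_\infty \geq 0$ in $\mathcal{S}(W_4(\R))$ supported in a thin spherical shell about $f_2 \in W_4(\R)$ that is angularly concentrated near the line $\R f_2$. Using the Iwasawa decomposition $\GL_{2,L}^*(\R) = N(\R)T'(\R)K_L$, with $T'$ the diagonal torus, together with $f_2\cdot ntk = t_2 f_2 k$ for $t = \diag(t_1,t_2)$ (since $N$ fixes $f_2$ and $T'$ acts on $f_2$ by $t_2$), the angular concentration of $\Phi_\infty$ forces the $K_L$-component of $g$ to lie near the compact $K_L$-stabilizer of the line $\R f_2$; this restricts the support of $f_\infty^{\Phi_\infty}|_{\GL_{2,L}^*(\R)}$, modulo $N(\R)Z(\R)$, to a small neighborhood of the identity in the compact $K_L$-direction, which may be taken inside $V$. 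For real $s$, moreover, $f_\infty^{\Phi_\infty}(g,s)\geq 0$ because it is an integral of nonnegative quantities.

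The main obstacle is the noncompact direction $Z(\R)\backslash T'(\R) \cong \R^\times$, which cannot be shrunk: writing $\tau = t_2/t_1$ for its coordinate and using (\ref{f_pdef}), one finds $f_\infty^{\Phi_\infty}(g,s)|_{Z\backslash T'} = |\tau|^{-2s}\,C(k,s)$ for a smooth nonnegative function $C(k,s)$. The $\tau$-integral of $F(\tau,k)|\tau|^{-2s}$ is thus a Mellin transform of the smooth function $\tau\mapsto F(\tau,k)$; cuspidality of $\pi$ gives rapid decay of $F$ along the torus, so this transform converges absolutely in a vertical strip and defines a holomorphic function of $s$ there. Since $F(\cdot, k)$ is smooth and nonzero at $\tau = 1$ for $k$ near the identity, Mellin uniqueness forces its transform not to vanish identically in $s$. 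By concentrating the support of $\Phi_\infty$ further, the kernel $C(\cdot,s)$ becomes concentrated at $k=1$, so integrating against it reduces the $K_L$-integral, to leading order, to a nonzero multiple of the Mellin transform of $F(\cdot,1)$, forcing $J_\infty(\phi',s)\not\equiv 0$ and completing the argument.
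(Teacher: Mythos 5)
Your approach differs genuinely from the paper's and has a gap in the treatment of the noncompact torus direction.

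The paper's key device is a \emph{unipotent} smoothing: setting $v_1 = \int_{\R}\eta(z)\pi(u(z))v\,dz$ for $u(z) = n_{24}(z)$, which by the identity $\ell(\pi(t_y)v_1) = \hat\eta(y)\,\ell(\pi(t_y)v)$ turns the inner integral over $T'(\R)$ into $\int_{\GL_1(\R)}|y|^{2s-2}\hat\eta(y)\ell(\pi(t_y)v)\,dy$. Taking $\hat\eta$ compactly supported near $y=1$ makes this integral have compact support in $y$, so convergence is trivial and nonvanishing follows from continuity and $\ell(v)\neq 0$. A bump choice of $\Phi_\infty$ on $N(\R)\backslash\SL_{2,L}(\R)$ then finishes. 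This argument is purely local and works for any $(U,\chi)$-functional $\ell$, which is exactly the form of the lemma the paper proves.

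Your proof instead smooths $v_\infty$ by a bump on the \emph{group} and then needs the full Mellin transform of $F(\tau,k)$ in the $T'$-direction to converge. The sentence ``cuspidality of $\pi$ gives rapid decay of $F$ along the torus'' carries the whole argument, and it is not justified. Cuspidality gives rapid decay of $\phi$ on a Siegel domain, which controls one end of $Z(\R)\backslash T'(\R)\cong\R^\times$; the decay at the opposite end (the end moving \emph{away} from the Siegel cusp) is not a direct consequence of cuspidality and must instead be extracted from the nondegeneracy of $\chi$ via an archimedean stationary-phase or conjugation argument, which you do not carry out. Until that decay is established, the Mellin transform may fail to converge in any vertical strip, and the ``Mellin uniqueness'' step has no purchase. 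By contrast, the paper's unipotent smoothing produces compact support in $y$ \emph{by construction} and so never meets this issue; notice also that your global decay input means your argument applies only to $\ell = H$, whereas the paper's lemma holds for arbitrary local $(U,\chi)$-functionals.

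Two smaller remarks. First, your formula ``$T'$ acts on $f_2$ by $t_2$'' conflates the one-parameter similitude torus $T' = \{\diag(y,y,1,1)\}$ of the paper (which actually fixes $f_2$) with the full diagonal torus of $\GL_{2,L}^*$; for the latter, $f_2 t$ is not a real scalar multiple of $f_2$ in general, so the angular-localization bookkeeping needs to be redone. Second, the ``thin spherical shell'' description of $\Phi_\infty$ is inessential since $f_\infty^{\Phi_\infty}$ already integrates out the radial $\GL_1$-scaling; only the angular localization in $N(\R)\backslash\SL_{2,L}(\R)$ matters, which is precisely the bump the paper takes directly. If you want to salvage your approach, the cleanest repair is to replace the group-convolution smoothing by the unipotent smoothing $v_1 = \int\eta(z)\pi(u(z))v\,dz$, at which point the Mellin machinery becomes unnecessary.
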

One can prove the analogous result for the integral $I(\phi,s)$.
\subsection{Main theorems}
We summarize the findings above in the first main theorem.
\begin{theorem} \label{thm:mainthm1} Suppose $\pi$ is an automorphic cuspidal representation of $\PGSp_4(\mathbf{A})$, and $\phi$ is a cusp form in the space of $\pi$.  Then the data $\alpha_f$, $\Phi$, and $\phi' \in V_\pi$ may be chosen so that 
\[I(\phi',s) = I_{\infty}(\phi,s)L^S(\pi,\Spin,2s-\frac{1}{2})\]
for a sufficiently large set of finite primes $S$.  Similarly, the data may be chosen so that 
\[J(\phi',s) = J_{\infty}(\phi,s)L^S(\pi,\Spin,2s-\frac{1}{2})\]
for a sufficiently large set of finite primes $S$.
\end{theorem}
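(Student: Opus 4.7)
The plan is to assemble the theorem directly from Corollary \ref{partialL} and Corollary \ref{badCor}; no new ideas beyond the machinery already in place are required. First I would reduce to the case where $\phi$ is factorizable under some isomorphism $\pi \simeq \otimes_v \pi_v$, which is harmless since both integrals are linear in $\phi$ and any cusp form in the space of $\pi$ is a finite sum of factorizable ones. I would then enlarge the finite set $S$ so that it contains the Archimedean place together with every finite prime at which either $\pi_p$ is ramified or the chosen $\phi$ fails to be fixed by $\GSp_4(\Z_p)$; with this choice, the hypotheses of Corollary \ref{partialL} are satisfied at all primes outside $S$.

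Next, I would apply Corollary \ref{badCor} to the pair $(\phi,S)$ to produce a factorizable Hecke element $\beta = \otimes_{p \in S \setminus \{\infty\}} \beta_p$ in $\otimes_{p \in S \setminus \{\infty\}} \mathcal{H}_p$, together with Schwartz data $\Phi_p \in \mathcal{S}(W_4(\Q_p))$ and $\alpha_p \in \mathcal{S}(V_5(\Q_p))$ for $p \in S \setminus \{\infty\}$, satisfying $I_S(\beta * \phi,s) = I_\infty(\phi,s)$. Setting $\phi' := \beta * \phi$ and taking $\Phi_p, \alpha_p$ to be the standard characteristic functions at the primes $p \notin S$, the convolution by $\beta$ preserves factorizability and, since $\beta$ acts trivially outside $S \setminus \{\infty\}$, the vector $\phi'$ is still $\GSp_4(\Z_p)$-fixed for every $p \notin S$. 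Hence Corollary \ref{partialL} applies to $\phi'$ with this choice of data and gives
\[ I(\phi',s) = I_S(\phi',s) L^S(\pi, \Spin, 2s - \tfrac{1}{2}) = I_\infty(\phi,s) L^S(\pi, \Spin, 2s - \tfrac{1}{2}), \]
which is the first assertion of the theorem.

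The statement for $J(\phi',s)$ is obtained by the identical argument, with Corollary \ref{partialL} and Corollary \ref{badCor} replaced by their $J$-analogues. These analogues are formally the same: the unfolded expression \eqref{unfoldJ}, the second half of Theorem \ref{KS:unram}, and the $J$-version of Proposition \ref{KS:ram} combine via the Basic Lemma of \cite{psr} in exactly the same way as in the proofs of Corollary \ref{partialL} and Corollary \ref{badCor}. Since all of the substantive content of the theorem is absorbed into the preceding results (unfolding, the unramified calculation, and the ramified vector control), there is no real obstacle at this stage; the only item to verify is the bookkeeping point above that convolving by $\beta$ does not spoil the spherical behavior at primes outside $S$, which is immediate from the support of $\beta$.
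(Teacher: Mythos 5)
Your proof takes exactly the paper's approach: the theorem is assembled from Corollary~\ref{partialL} and Corollary~\ref{badCor}, which is precisely what the paper intends when it writes ``It follows immediately from Corollary~\ref{partialL} and Corollary~\ref{badCor}'' just before stating the theorem. The bookkeeping you carry out---choosing $S$ to absorb all ramification, setting $\phi' = \beta * \phi$, checking that convolution by $\beta$ preserves sphericality at primes outside $S$, and then chaining the two corollaries---is correct and complete for a factorizable $\phi$.

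One caveat: the opening reduction ``harmless since both integrals are linear in $\phi$'' does not actually work as stated. If $\phi = \sum_i c_i \phi_i$ with $\phi_i$ factorizable, then applying Corollary~\ref{badCor} to each $\phi_i$ produces possibly \emph{different} local data $(\Phi_p^{(i)}, \alpha_p^{(i)}, \beta_p^{(i)})$ at the bad places (since Proposition~\ref{KS:ram} chooses $\Phi_p$, $\alpha_p$ depending on the local vector $v_p$). The resulting global integrals $I(\,\cdot\,,s)$ therefore have different data for different $i$ and cannot simply be summed. So linearity does not reduce the general case to the factorizable one. That said, Corollaries~\ref{partialL} and \ref{badCor} both explicitly assume $\phi$ factorizable, and the paper's own ``proof'' of the theorem is just the assembly you describe; the natural reading is that the theorem is meant for factorizable $\phi$ (or, equivalently, that $\phi$ may be replaced by a factorizable cusp form generating the same representation). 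Your proof is correct under that reading; the linearity remark should simply be dropped or replaced by the observation that one may without loss of generality take $\phi$ factorizable.
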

In Theorem \ref{thm:mainthm1}, the Archimedean integral $I_{\infty}(\phi,s)$ is given by (\ref{IOmega}) with $\Omega = \{\infty\}$, and similarly for $J_{\infty}(\phi,s)$.  Observe that these integrals involve the actual cusp form $\phi$ rather than an arbitrary $(U,\chi)$-functional $\ell$.

If $\phi$ comes from a level one holomorphic Siegel modular form, then $\pi_p$ is spherical for all finite primes $p$.  (See Section \ref{hol} for a precise statement.)  Thus we can select $\Phi_p$ and $\alpha_p$ for all finite primes $p$ so that the set of bad places $S$ in Theorem \ref{thm:mainthm1} is empty.  Furthermore, we can even choose $\alpha_\infty$ and $\Phi_\infty$ in such a way that we can calculate $I_{\infty}(\phi,s)$ explicitly in terms of $\Gamma$-functions.
\begin{theorem} \label{archimedean}Suppose $\phi$ comes from a level one holomorphic Siegel modular form $f_\phi$ of weight\footnote{In the presence of our level one assumption, the condition $r \geq 6$ is redundant, since the smallest weight of a level one holomorphic Siegel modular cusp form on $\GSp_4$ is $10$.} $r \geq 6$, as made precise in Section \ref{hol}.  Set $T =   \mm{-D}{}{}{1}$ if $D \not \equiv 1 \pmod{4}$ and $T = \mm{\frac{1-D}{4}}{\frac{1}{2}}{\frac{1}{2}}{1}$ if $D \equiv 1 \pmod{4}$.  Denote by $a(T)$ the Fourier coefficient of $f_\phi$ corresponding to the two-by-two symmetric matrix $T$.  Then $\alpha_\infty, \Phi_\infty$ may be chosen so that
\[I_{\infty}(\phi,s) = a(T)\pi^{-2s}(4\pi)^{-(2s+r-2)}\Gamma(2s)\Gamma(2s+r-2).\]
Consequently, with this choice of data, 
\[I(\phi,s) = a(T)\pi^{-2s}(4\pi)^{-(2s+r-2)}\Gamma(2s)\Gamma(2s+r-2)L(\pi,\Spin,2s-\frac{1}{2}).\]
\end{theorem}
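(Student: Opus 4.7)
The plan is to translate the adelic Archimedean integral
\[I_\infty(\phi,s) = \int_{U(\R)Z(\R)\backslash \GSp_4(\R)} f_\infty^{\Phi_\infty}(g,s)\,\alpha_{\chi,\infty}(g)\,\phi_\chi(g)\,dg\]
into a classical Mellin-type integral over the Levi of the Siegel parabolic, and then to extract the two Gamma functions from a (pair of) $\Gamma$-integrals. The first step is to pick $\Phi_\infty$ so that $f_\infty^{\Phi_\infty}(g,s)$ is a clean function on $K_\infty\cap \GSp_4(\R)$-cosets — the natural choice being the Gaussian $\Phi_\infty(v)=e^{-\pi\|v\|^2}$ (or a weight-$r$ modification of it) so that, via \eqref{f_pdef} and the Iwasawa decomposition, $f_\infty^{\Phi_\infty}$ descends to an explicit function on $U(\R)\backslash P(\R)/K_\infty$ — and to pick $\alpha_\infty$ in the spirit of the polynomially decaying choice alluded to after (\ref{PDintro}) so that $\alpha_{\chi,\infty}$ is (up to a constant) the function $Z \mapsto Q_{v_D}(Z)^{-r}$, or equivalently transforms under the maximal compact via the holomorphic weight-$r$ character.

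Next, for $\phi$ coming from a level one holomorphic Siegel cusp form $f_\phi$ of weight $r$, the $(U,\chi)$-coefficient $\phi_\chi$ is, by the classical Fourier expansion $f_\phi(Z)=\sum_{T>0}a(T)e^{2\pi i\,\tr(TZ)}$ together with the bijection between characters of $U(\R)$ and symmetric matrices $T$, equal to
\[\phi_\chi(g) = a(T)\,j(g,iI_2)^{-r}\,e^{2\pi i\,\tr(T\cdot g\langle iI_2\rangle)}\]
for $g$ acting on $iI_2$ by the usual action on the Siegel upper half space, where $T$ is the symmetric matrix read off from $\chi$ (for either parity of $D\pmod 4$ this is exactly the matrix in the theorem). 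This identifies $\phi_\chi$ up to the explicit $a(T)$ prefactor.

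With both $\phi_\chi$ and $\alpha_{\chi,\infty}$ transforming under $K_\infty\cap \GSp_4(\R)$ by the same (holomorphic) character, I would use the Iwasawa decomposition $\GSp_4(\R) = U(\R)M(\R)K_\infty$ to collapse $I_\infty(\phi,s)$ to an integral over $Z(\R)\backslash M(\R)$. Writing elements of $M\simeq \GL_2 \times \GL_1$ modulo center in terms of a diagonal matrix $\diag(y_1,y_2)$ and an upper-triangular unipotent factor, the character $\chi$ together with $T$ forces the unipotent integral to collapse, and the remaining integral factorizes as a product of two one-dimensional Mellin transforms of the form
\[\int_0^\infty y^{a}e^{-4\pi y}\frac{dy}{y}\qquad\text{and}\qquad \int_0^\infty y^{b}e^{-4\pi y}\frac{dy}{y},\]
with $a=2s$ and $b=2s+r-2$. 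Evaluating each as $(4\pi)^{-a}\Gamma(a)$ respectively $(4\pi)^{-b}\Gamma(b)$ yields, after collecting powers of $\pi$ from the normalization of $\Phi_\infty$ and $\alpha_\infty$, the expression
\[a(T)\,\pi^{-2s}(4\pi)^{-(2s+r-2)}\Gamma(2s)\Gamma(2s+r-2);\]
the second formula in the theorem then follows immediately from Theorem~\ref{thm:mainthm1} and the fact that $\pi_p$ is spherical at every finite $p$ in the level-one setting (so $S$ can be taken empty).

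The main technical obstacle I anticipate is the precise bookkeeping in the second step: making sure the weight-$r$ factors of automorphy coming from $\phi_\chi$ and from $\alpha_{\chi,\infty}$ match so that the $K_\infty$-integral contributes cleanly, and that the modulus character appearing in the Iwasawa measure combines correctly with $\delta_Q^s$ hidden inside $f_\infty^{\Phi_\infty}$ to produce exactly the exponents $2s$ and $2s+r-2$ in the two Mellin integrals. In particular, picking the right non-Schwartz $\alpha_\infty$ (decaying polynomially, transforming by the weight-$r$ character of $K_\infty$) — and verifying that with this choice the integral defining $\alpha_{\chi,\infty}$ in \eqref{eqn:alphachivdef} still converges and gives the expected function — is the most delicate point and the one to handle first.
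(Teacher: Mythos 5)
Your overall strategy---Gaussian $\Phi_\infty$, a polynomially decaying $\alpha_\infty$ of weight $r$, the explicit formula for $\phi_\chi$ via the classical Fourier expansion, Iwasawa reduction to the Levi---is the same route the paper takes.  There are, however, two concrete problems with the final stretch of the argument.

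First, your formula for $\phi_\chi$ is missing the similitude factor: for $g_\infty\in\GSp_4^+(\R)$ one has $\phi_\chi(g_\infty)=a(T)\,\nu(g_\infty)^r j(g_\infty,i)^{-r}e^{2\pi i \tr(T\,g_\infty(i))}$, not $a(T)\,j(g_\infty,i)^{-r}e^{2\pi i\tr(T\,g_\infty(i))}$.  The $\nu^r$ factor matters because it contributes a power of $\det(Y)$ once everything is re-expressed on the upper half space, and getting that exponent wrong changes $\Gamma(2s+r-2)$ into something else.  (One also needs the fact that $\phi_\chi$ vanishes on the negative-similitude component, which lets you work on $\GSp_4^+(\R)$ at all.)

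Second, and more structurally, the claim that the reduced integral over $Z(\R)\backslash M(\R)$ ``factorizes as a product of two one-dimensional Mellin transforms of the form $\int_0^\infty y^a e^{-4\pi y}\,dy/y$'' with $a=2s$ and $b=2s+r-2$ is not how the calculation goes, and it would not give the right constants.  In fact $\Gamma(2s)$ arises before any Mellin transform on the Levi: with $\Phi_\infty(v)=e^{-\pi\|v\|^2}$, the $\GL_1$-integral defining $f_\infty^\Phi$ already yields $f_\infty^\Phi(g,s)=\pi^{-2s}\Gamma(2s)\,|y_{11}|^{-2s}|\det Y|^{2s}$; note the prefactor is $\pi^{-2s}$, not $(4\pi)^{-2s}$, so this factor cannot be one of your two $(4\pi)^{-\bullet}\Gamma(\bullet)$ Mellins.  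After substituting the explicit formulas for $f_\infty^\Phi$, $\alpha_{\chi,\infty}$ (computed as in Lemma~\ref{lem:alphainfcalc}: $\overline{\alpha_{\chi,\infty}(g)}$ is a constant times $\nu(g)^r j(g,i)^{-r}e^{-2\pi\tr(TY)}$) and $\phi_\chi$, the remaining integral is a \emph{single} integral over positive-definite symmetric $Y$, and it does not split into two $\GL_1$-integrals.  The key device is the Kohnen--Skoruppa substitution $t=\det(Y)/y_{11}=y_{22}-y_{12}^2/y_{11}$, which isolates \emph{one} Mellin integral $\int_0^\infty t^{2s+r-2}e^{-4\pi t}\,dt/t=(4\pi)^{-(2s+r-2)}\Gamma(2s+r-2)$ in the $t$-variable, while the remaining $(y_{11},y_{12})$-integral $\int e^{-4\pi(y_{12}^2/y_{11}+|D|y_{11})}\,dy_{11}\,dy_{12}$ is merely a nonzero constant (depending on $|D|$) that is absorbed into the normalization of $\alpha_\infty$.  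So you obtain $\pi^{-2s}\Gamma(2s)$ from the section and $(4\pi)^{-(2s+r-2)}\Gamma(2s+r-2)$ from the $t$-Mellin, not two Mellins of identical shape.  You should fix the final reduction along these lines before the ``bookkeeping'' you flagged will actually close.
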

This theorem will be proved in Section \ref{hol}.
\section{Unramified calculation}\label{unramified}
In this section, we carry out a purely local non-Archimedean calculation to prove Theorem \ref{KS:unram}.  Thus $p$ in this section always denotes a fixed finite place.  Both here as well as in Section \ref{ramified}, we will find it convenient to have a notation for the characteristic functions of sets.  If $\mathcal{P}$ is a condition on, say, the set of four-by-four matrices, then $\charf(\mathcal{P})$ denotes the characteristic function of the set where $\mathcal{P}$ is satisfied.  For instance, $\charf(\det(g) \neq 0)$ denotes the characteristic function of $\GL_4$ inside $M_4$.  Since we will use $\charf(g \in M_4(\mathbf{Z}_p))$ very frequently, we write just $\charf(g)$ for this characteristic function of $M_4(\Z_p)$.  The exact domain of definition of the functions $\charf(\mathcal{P})$ will always be clear from the context. 

The general strategy for proving results such as Theorem \ref{KS:unram} goes back to Piatetski-Shapiro and Rallis \cite{psr}.  Define
\begin{equation}\label{delta_Defn}\Delta^s(g) = |\nu(g)|^s \charf(g), \end{equation}
where $s \in \mathbf{C}$ and $g\in \GSp_4(\mathbf{Q}_p)$.  One starts with the explicit determination of $L(\pi_p,\Spin,s)$ in terms of Hecke operators, which is codified in the following statement that relates this local $L$-function to $\Delta^s$.
\begin{proposition}\label{omega_p} Suppose $\Omega_p(g)$ is the Macdonald spherical function for the unramified representation $\pi_p$, normalized so that $\Omega_p(1) = 1$.  Then
\[\int_{\GSp_4(\mathbf{Q}_p)}{\Delta^s(g) \Omega_p(g) \,dg} = (1 - \omega_\pi(p)p^{2-2s})L(\pi_p,\Spin,s-\frac{3}{2}),\]
where $\omega_\pi$ is the central character of $\pi_p$.
\end{proposition}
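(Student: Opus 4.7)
The plan is to reduce the integral via the Cartan decomposition of $\GSp_4(\Q_p)$ with respect to $K_p = \GSp_4(\Z_p)$. Both factors in the integrand are bi-$K_p$-invariant: $\Omega_p$ by its defining property, and $\Delta^s$ because the inclusion $K_p \subseteq M_4(\Z_p)$ makes $\charf(g)$ bi-$K_p$-invariant. The integral therefore collapses to a sum
\[\sum_{\lambda}\, p^{-s\,\ord_p(\nu(t^\lambda))}\,\mathrm{vol}(K_p t^\lambda K_p)\,\Omega_p(t^\lambda),\]
indexed by dominant cocharacters $\lambda$ of the diagonal torus for which $t^\lambda \in M_4(\Z_p)$. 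For $\GSp_4$ these are the elements $t^\lambda = \diag(p^{a_1},p^{a_2},p^{b_1},p^{b_2})$ with $a_1+b_1 = a_2+b_2$, all four exponents nonnegative, and the usual dominance inequalities.

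Next I would apply Macdonald's formula to express $\Omega_p(t^\lambda)$ as an explicit function of the Satake parameters $(\alpha_0,\alpha_1,\alpha_2)$ of $\pi_p$, and combine it with the standard formula for $\mathrm{vol}(K_p t^\lambda K_p)$. The product $\mathrm{vol}(K_p t^\lambda K_p)\,\Omega_p(t^\lambda)$ is then the value at the Satake parameters of the Satake transform of the characteristic function of $K_p t^\lambda K_p$, so the whole integral becomes a single generating function in $p^{-s}$ and the Satake parameters.

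To evaluate this generating function, I would separate off the contribution of the center $Z(\Q_p) \cong \Q_p^\times$. A central element $p^k I_4$ has $\nu(p^k I_4) = p^{2k}$ and is acted on by $\Omega_p$ via $\omega_\pi(p)^k$, producing a geometric series in $\omega_\pi(p) p^{-2s}$; combined with the Macdonald normalization factors this will be responsible for the explicit $(1-\omega_\pi(p)p^{2-2s})$ term in the stated identity. The remaining sum runs over dominant cocharacters modulo the center and can be evaluated through a Weyl-denominator / Gindikin--Karpelevich style identity, yielding the Euler product $L(\pi_p,\Spin,s-\tfrac{3}{2}) = \prod_{i=1}^{4}\bigl(1-\beta_i\,p^{3/2-s}\bigr)^{-1}$ with $\beta_i$ the four Spin Satake parameters built from $\alpha_0,\alpha_1,\alpha_2$.

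The main obstacle will be this last summation: organizing the output of Macdonald's formula so that exactly four geometric factors emerge, and tracking the shift by $s-\tfrac{3}{2}$ together with the half-sum-of-positive-roots normalizations that relate $\Omega_p(t^\lambda)$ to the Weyl-character numerator. This is in essence the computation Andrianov carried out classically for the spinor zeta function of a genus-two Siegel modular form, so my plan is to transport that calculation to the present adelic setting rather than re-derive it from first principles.
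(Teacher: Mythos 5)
Your plan via the Cartan decomposition, Macdonald's formula, and the volume formula for $K_p t^\lambda K_p$ is a legitimate and classical route (it is essentially Andrianov's and Shimura's original approach), but the paper's proof in Appendix A goes a genuinely different and more elementary way. The paper exploits the identity
\[\int_{\GSp_4(\Q_p)}\phi_\alpha(g)\,\Delta^s(g)\,dg = \int_{\GSp_4(\Q_p)}\Omega_\alpha(g)\,\Delta^s(g)\,dg,\]
valid because $\Delta^s$ is left $\GSp_4(\Z_p)$-invariant, and then applies the \emph{Iwasawa} decomposition (not Cartan) to the left-hand side. This collapses the problem to computing the simple function $I_B(t) = \int_{N(\Q_p)}\charf(nt)\,dn$, which the authors evaluate directly by first handling the Siegel unipotent $U$ (Lemma \ref{lemma:ipformula}), getting a clean closed form $I_B(t)=\delta_B(t)^{1/2}|\nu(t)|^{-3/2}(\val_p(pt)-|p|\val_p(t))$. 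The remaining torus integral is finished with a purely combinatorial count (Lemma \ref{lemma2}) of the number of ways to express a dominant torus element as a word in $t_A,t_B,t_C,t_D$, which immediately factors the sum into the four geometric series giving the Spin Euler factor, and the factor $(1-\omega_\pi(p)p^{2-2s})$ drops out of a telescoping manipulation with $\val_p(pt)$. The advantage of the Iwasawa route is that one never needs Macdonald's explicit spherical-function formula nor the $\mathrm{vol}(K_p t^\lambda K_p)$ formula; the one computation the student flags as the ``main obstacle'' (organizing the Macdonald output into four Euler factors) is exactly what the paper sidesteps. Your approach would work, but as written it defers the central identity to Andrianov rather than proving it; to make the argument self-contained you would still need to carry out the Weyl-denominator summation, which is at least as much work as the paper's direct Iwasawa calculation.
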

This is a classical result of Shimura \cite[Theorem 2]{shimura} in the theory of the rank two symplectic group.  A proof can be found in \cite[Proposition 3.53]{a2}.  Since the statement and proof in \cite{a2} is in a semi-classical framework, we give a self-contained argument in Appendix \ref{localSpin} as well as definitions of $\Omega_p$ and the local Spin $L$-function.

Suppose now that $\ell$ is a $(U,\chi)$-functional as defined in Section \ref{subsec:nonuniquemodels}.  If $v_0$ is a spherical vector for $\pi_p$, it follows from Proposition \ref{omega_p} that
\begin{align*}\int_{\GSp_4(\mathbf{Q}_p)}{\Delta^s(g) \ell(\pi(g) v_0) \,dg} &= \int_{\GSp_4(\mathbf{Z}_p) \backslash \GSp_4(\mathbf{Q}_p)}{\Delta^s(g) \int_{\GSp_4(\mathbf{Z}_p)} \ell(\pi(kg) v_0)\,dk \,dg}\\ &= \ell(v_0)\int_{\GSp_4(\mathbf{Q}_p)}{\Delta^s(g) \Omega_p(g) \,dg} \\ &= \ell(v_0)(1 - \omega_\pi(p)p^{2-2s})L(\pi_p,\Spin,s-\frac{3}{2}),\end{align*}
where we have used the invariance of $\Delta$ under left $\GSp_4(\mathbf{Z}_p)$-multiplication in the first equality and, in the second, the fact that 
\begin{equation}\label{ell_Omega}\ell(v_0)\Omega_p(g) = \int_{\GSp_4(\mathbf{Z}_p)}{\ell(\pi(kg) v_0)\,dk}. \end{equation}
To see the validity of (\ref{ell_Omega}), note that the right-hand side of this equation defines a bi-$K$-invariant function on $\GSp_4(\Q_p)$, whose value at $g = 1$ is $\ell(v_0)$.  Since the eigenvalues of this function under the action of the spherical Hecke algebra of $\GSp_4(\Q_p)$ agree with those of a nonzero spherical vector in the space of $\pi_p$, the equality follows.  (When $\ell(v_0) = 0$, this argument shows that both sides of (\ref{ell_Omega}) are $0$.)

Thus, to prove $I_p(\ell,s) = \ell(v_0)L(\pi_p,\Spin,2s-\frac{1}{2})$, it suffices to prove 
\begin{equation}\label{eqn:mainunrameq} (1 - \omega_\pi(p)p^{2-2s})I_p\left(\ell,\frac{s-1}{2}\right) = \int_{\GSp_4(\mathbf{Q}_p)}{\Delta^s(g) \ell(\pi(g) v_0) \,dg}.\end{equation}
We will prove this by explicitly calculating both sides.

Recall the definition of $\alpha_{\chi,p}$ from (\ref{eqn:alphachivdef}).  We now calculate $\alpha_{\chi,p}$ when $\alpha_p$ is the characteristic function of $V_5(\Z_p)$. To succinctly express the result, we first define functions $\Delta_0$ and $\Delta_0'$ as follows.  First, by the Iwasawa decomposition, any $m \in \GL_2(\mathbf{Q}_p)$ can be transformed by right $\GL_2(\mathbf{Z}_p)$-multiplication to a matrix of the form $\mm{m_{11}}{m_{12}}{}{m_{22}}$.  We then set
\begin{equation} \label{delta_0prime} \Delta_0'(m)=\begin{cases} 1 & \textrm{if }m_{22} | m_{11}, m_{22} | m_{12},\textrm{ and }(\frac{m_{12}}{m_{22}})^2 \equiv D \pmod{\frac{m_{11}}{m_{22}}}\\ 0 & \textrm{otherwise}\end{cases}\end{equation}
if $D \not\equiv 1 \pmod{4}$ and
\begin{equation} \label{delta_0prime1mod4} \Delta_0'(m)=\begin{cases} 1 & \textrm{if }m_{22} | m_{11}, m_{22} | 2m_{12},\textrm{ and }(\frac{m_{12}}{m_{22}})^2-\frac{m_{12}}{m_{22}} \equiv \frac{D-1}{4} \pmod{\frac{m_{11}}{m_{22}}}\\ 0 & \textrm{otherwise}\end{cases}\end{equation}
if $D \equiv 1 \pmod{4}$.  This definition is independent of the choice of $m_{11},m_{12}$, and $m_{22}$.  For a matrix
\[m = \mb{m_t}{}{}{m_b}\]
in the Levi of the Siegel parabolic of $\GSp_4$, so that $m_t, m_b$ are $2\times 2$ matrices, we define 
\begin{equation} \label{delta_0} \Delta_0(m) = \charf\left(\left|\frac{\det(m_t)}{\det(m_b)}\right| \leq 1\right)\left|\frac{\det(m_t)}{\det(m_b)}\right|^{1/2}\Delta_0'(m_b).\end{equation}

\begin{proposition}\label{KS:alphachi} Suppose that $\alpha_p$ is the characteristic function of $V_5(\mathbf Z_p)$.  Then if $m$ is in the Levi of the Siegel parabolic, $\alpha_{\chi,p}(m) = \Delta_0(m)$.
\end{proposition}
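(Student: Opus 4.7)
The plan is to compute $\alpha_{\chi,p}(m)$ by unfolding \eqref{eqn:alphachivdef} and matching the result coefficient-by-coefficient against the definition \eqref{delta_0} of $\Delta_0$.

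First I would reduce to a convenient representative for $m$. The function $\alpha_p$ is right $\GSp_4(\Z_p)$-invariant, since $V_5(\Z_p)$ is $\GSp_4(\Z_p)$-stable by construction, and feeding this into \eqref{eqn:alphachivdef} immediately gives the right $(K \cap M)(\Z_p)$-invariance $\alpha_{\chi,p}(mk) = \alpha_{\chi,p}(m)$. In parallel, $\Delta_0'$ is constructed in \eqref{delta_0prime} so as to be invariant under right $\GL_2(\Z_p)$-multiplication on $m_b$, and right-multiplying $m$ by $\mm{k_t}{0}{0}{(k_t^t)^{-1}} \in K \cap M$ right-multiplies $m_b$ by $(k_t^t)^{-1} \in \GL_2(\Z_p)$. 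Using the $\GL_2$-Iwasawa decomposition, I would therefore take $m_t = \mm{a}{0}{c}{d}$ lower triangular; this makes $m_b = \frac{\nu}{ad}\mm{d}{-c}{0}{a}$ upper triangular and directly in the form required by \eqref{delta_0prime}.

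Next I would rewrite the integral. The proof of the lemma preceding Proposition \ref{unfoldProp} parameterizes $N(\Q_p) \backslash U(\Q_p)$ by $\{n_{24}(z) : z \in \Q_p\}$ with $v_D n_{24}(z) = v_D + z\, f_2 \wedge f_1$, and a direct check from the formula for $\chi$ gives $\chi_p(n_{24}(z)) = \psi_p(z)$ in both parity cases, so
\[\alpha_{\chi,p}(m) = \int_{\Q_p} \psi_p(z)\, \alpha_p\bigl((v_D + z\, f_2 \wedge f_1)m\bigr)\,dz.\]
The core calculation is to expand $(v_D + z\, f_2 \wedge f_1)m$ in the $V_5(\Z_p)$-basis, using $e_i m = \sum_j m_{t,ij}\, e_j$, $f_i m = \sum_j m_{b,ij}\, f_j$, and the $\nu^{-1}$-twist built into the definition of $V_5$. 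For $D \not\equiv 1 \pmod 4$ and the lower-triangular $m_t$ above, this expansion will read
\[\tfrac{c}{a}\,(e_1 \wedge f_1 - e_2 \wedge f_2) + \tfrac{Da^2 - c^2}{ad}\,e_1 \wedge f_2 + \tfrac{d}{a}\,e_2 \wedge f_1 - \tfrac{z\nu}{ad}\,f_1 \wedge f_2,\]
with no $e_1 \wedge e_2$ component. The first three coefficients are independent of $z$ and have $\Z_p$-integrality equivalent to $a \mid c$, $ad \mid c^2 - Da^2$, and $a \mid d$, which by inspection of \eqref{delta_0prime} is exactly the condition $\Delta_0'(m_b) = 1$; the last coefficient is purely linear in $z$, so its integrality cuts out the subgroup $z \in (ad/\nu)\, \Z_p$.

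Finally, the $z$-integration is routine: $\int_{(ad/\nu)\Z_p} \psi_p(z)\, dz$ equals $|ad/\nu|_p$ precisely when $ad/\nu \in \Z_p$ (so that $\psi_p$ is trivial on this subgroup) and vanishes otherwise. Using $|ad/\nu|_p = |\det(m_t)/\det(m_b)|_p^{1/2}$ and the equivalence $ad/\nu \in \Z_p \iff |\det(m_t)/\det(m_b)|_p \leq 1$, this combined with the three $z$-free conditions above reproduces \eqref{delta_0} exactly. For $D \equiv 1 \pmod 4$ the same template applies with the altered $v_D$ from \eqref{vd1mod4} and corresponding $\chi$; the replacement of $e_1 \wedge f_1 - e_2 \wedge f_2$ by $\tfrac{1}{2}(e_1 \wedge f_1 - e_2 \wedge f_2)$ in the $V_5(\Z_p)$-basis shifts the integrality condition on that coefficient in exactly the way needed to reproduce \eqref{delta_0prime1mod4}. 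The main technical obstacle will be the explicit wedge-product expansion and the careful coefficient-by-coefficient matching of the resulting divisibility conditions with the defining conditions of $\Delta_0'$; the $\nu^{-1}$-twist and the half-integer basis element in the $D \equiv 1 \pmod 4$ case both require some care.
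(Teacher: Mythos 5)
Your proposal is correct and follows essentially the same approach as the paper's proof: reduce to a triangular representative of $m$ via right $\GSp_4(\Z_p)$-invariance of $\alpha_{\chi,p}$, parameterize $N(\Q_p)\backslash U(\Q_p)$ by $n_{24}(z)$, expand $v_D n_{24}(z)m$ in the chosen $V_5(\Z_p)$-basis, match the three $z$-free integrality conditions to the divisibility and congruence conditions defining $\Delta_0'(m_b)$, and evaluate the residual $z$-integral by orthogonality of characters to produce the factor $|\det(m_t)/\det(m_b)|^{1/2}$ cut off by the condition $|\det(m_t)/\det(m_b)|\le 1$. The only cosmetic difference is that the paper parameterizes $m$ by first extracting a scalar $\diag(\lambda,\lambda,1,1)$ and writing the $f_1\wedge f_2$ coefficient as $-z/\lambda$, whereas you absorb that scalar into $m_t$ directly and write $-z\nu/(ad)$; these are identical under the substitution $\lambda = ad/\nu$, and all your stated coefficient identities agree with the paper's.
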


\begin{proof} Suppose
\begin{equation} \label{eqn:mn24eq} m =\left(\begin{array}{cc|cc} \lambda& & &\\ &\lambda& & \\ \hline & &1& \\ & & &1\end{array}\right)\left(\begin{array}{cc|cc} a&b & &\\ c&d& & \\ \hline & &d&-c \\ & & -b&a\end{array}\right), \quad n_{24}(z) = \left(\begin{array}{cc|cc} 1& & &\\ &1& &z\\ \hline & &1& \\ & & &1\end{array}\right).\end{equation}
Then if $D \not\equiv 1 \pmod{4}$, we have
\begin{align*}
	v_Dn_{24}(z)m =& \delta^{-1}(cd-Dab)(e_1\wedge f_1 - e_2 \wedge f_2) + \delta^{-1}(Da^2-c^2) e_1 \wedge f_2 + \delta^{-1}(d^2-Db^2)e_2 \wedge f_1\\
	&- \frac{z}{\lambda} f_1 \wedge f_2,
\end{align*}
where $\delta = ad-bc$.  Since $\alpha_{\chi,p}(m)$ is invariant under right $\GSp_4(\mathbf{Z}_p)$-multiplication, we may assume $b = 0$.  Then
\[v_Dn_{24}(z)m = \frac{c}{a}(e_1\wedge f_1 - e_2 \wedge f_2) + \frac{Da^2-c^2}{ad} e_1 \wedge f_2 + \frac{d}{a} e_2 \wedge f_1- \frac{z}{\lambda} f_1 \wedge f_2.\]
Thus, under our assumption on $\alpha_p$, for the integral
\[\int_{\mathbf{Q}_p}{\psi(z)\alpha_p(v_Dn_{24}(z)m)\,dz} = \alpha_{\chi,p}(m)\]
to be nonzero, it is necessary to have $\frac{c}{a}, \frac{Da^2-c^2}{ad}$, and $\frac{d}{a}$ in $\mathbf{Z}_p$.  If these hold, by orthogonality of characters on the compact subgroup of $\mathbf{Q}_p$ where $\alpha_p(v_Dn_{24}(z)m)$ is supported, the integral is nonzero exactly when $|\lambda|\le 1$ and in this case equal to $|\lambda|$.  The conditions for the first three coefficients are exactly the conditions defining $\Delta_0'(m_b)$.  Since $|\lambda| = \left|\frac{\det(m_t)}{\det(m_b)}\right|^{1/2}$, the proposition follows.

If $D \equiv 1 \pmod{4}$, we have
\begin{align*}
	v_Dn_{24}(z)m =& \(\delta^{-1}cd-\frac{D-1}{4}\delta^{-1}ab+\frac{1}{2}\)(e_1\wedge f_1 - e_2 \wedge f_2) + \delta^{-1}\(\frac{D-1}{4}a^2-ac-c^2\) e_1 \wedge f_2\\
	&+ \delta^{-1}\(d^2+db-\frac{D-1}{4}b^2\)e_2 \wedge f_1- \frac{z}{\lambda} f_1 \wedge f_2.
\end{align*}
When $b=0$, this is
\[\(\frac{2c}{a}+1\)\frac{1}{2}(e_1\wedge f_1 - e_2 \wedge f_2)+\frac{\frac{D-1}{4}-\frac{c}{a}-(\frac{c}{a})^2}{\frac{d}{a}} e_1 \wedge f_2 + \frac{d}{a}e_2 \wedge f_1- \frac{z}{\lambda} f_1 \wedge f_2.\]
Reasoning as above and using the different integral structure on $V_5(\mathbf{Z}_p)$ in this case yields the proposition.
\end{proof}

Recall the formula (\ref{f_pdef}) for the section $f_p^\Phi(g,s)$ appearing in the definition of the Eisenstein series.  Applying (\ref{f_pdef}), we now compute
\begin{align*} I_p(\ell,s) &= \int_{U(\mathbf{Q}_p)Z(\mathbf{Q}_p)\backslash \GSp_4(\mathbf{Q}_p)}{f_p^\Phi(g,s,\Phi)\alpha_{\chi}(g) \ell(\pi(g)v_0) \,dg} \\
	&= \int_{M(\mathbf{Q}_p)}{\delta_P^{-1}(g)|\nu(g)|^{2s}\Phi(f_2 g)\alpha_\chi(g)\ell(\pi(g) v_0) \,dg} \\
	&= \int_{M(\mathbf{Q}_p)}{\delta_P^{-1}(g)|\nu(g)|^{2s}\Delta_0(g)\charf(g)\ell(\pi(g) v_0) \,dg}.
\end{align*}
For the last equality, it follows easily from the definition in (\ref{delta_0}) that if $\Delta_0(g) \neq 0$, then $\Phi(f_2g) \neq 0$ if and only if $\charf(g) \neq 0$. For an element $g$ of $\GL_4(\mathbf{Q}_p)$, we define $\val_p(g)$ to be the unique integer such that $g = p^{\val_p(g)}g_0$ with $g_0$ in $M_4(\mathbf{Z}_p) \setminus pM_4(\mathbf{Z}_p)$.  Then
\begin{align} I_p(\ell,s)&= \sum_{k \geq 0}{(|p|^{4s}\omega_\pi(p))^k}\int_{M(\mathbf{Q}_p)}{\delta_P^{-1}(g)|\nu(g)|^{2s}\Delta_0(g)\charf(\val_p(g) = 0)\ell(\pi(g) v_0) \,dg} \nonumber \\
	&= \left(1-\omega_\pi(p)p^{-4s}\right)^{-1}\int_{M(\mathbf{Q}_p)}{\delta_P^{-1}(g)|\nu(g)|^{2s}\Delta_0(g)\charf(\val_p(g) = 0)\ell(\pi(g) v_0) \,dg}.\label{eqn:ipsum} \end{align}
Shifting the $s$ parameter, we deduce that
\begin{equation}\label{eqn:unramcalclhsI}(1-\omega_\pi(p)p^{2-2s})I_p\(\ell,\frac{s-1}{2}\)=\int_{M(\mathbf{Q}_p)}{\delta_P^{-1}(g)|\nu(g)|^{s-1}\Delta_0(g)\charf(\val_p(g) = 0)\ell(\pi(g) v_0) \,dg}.\end{equation}

We now consider the right-hand side of (\ref{eqn:mainunrameq}), which is $\int_{\GSp_4(\mathbf{Q}_p)}{\Delta^s(g) \ell(\pi(g) v_0) \,dg}$.  By the Iwasawa decomposition, we may rewrite this integral as
\[\int_{M(\mathbf{Q}_p)}\delta_P^{-1}(g)|\nu(g)|^s\ell(\pi(g)v_0)\left\{\int_{U(\mathbf{Q}_p)}\chi(u)\charf(ug) \,du\right\}\,dg.\]
We study the conditions under which the inner integral can be nonvanishing.
\begin{lemma} \label{lem:nonvanishingconds} Define $K_M = \GSp_4(\mathbf{Z}_p) \cap M(\mathbf{Q}_p)$, let $g$ be in $M(\mathbf{Q}_p)$, and suppose that $g$ is equivalent by right $K_M$-multiplication to a matrix with bottom right $2 \times 2$ block of the form $\mm{p^\alpha}{b}{}{p^\delta}$.  Then
\begin{equation}\label{eqn:innerint}\int_{U(\mathbf{Q}_p)}{\chi(u)\charf(ug) \,du}\end{equation}
is $0$ unless $\delta = 0$, $\alpha \ge 0$, and $b^2 \equiv D \pmod{p^\alpha}$ or $b^2 - b \equiv \frac{D-1}{4} \pmod{p^\alpha}$ in the respective cases that $D \not\equiv 1 \pmod{4}$ or $D \equiv 1 \pmod{4}$.  When these conditions hold, the integral is $p^\alpha$.
\end{lemma}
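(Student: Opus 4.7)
The plan is to parametrize $u \in U$ by the symmetric matrix $X = \mm{u_{11}}{u_{12}}{u_{12}}{u_{22}}$, so that $u = \mm{I_2}{X}{0}{I_2}$, and then integrate iteratively in the order $u_{22}$, $u_{12}$, $u_{11}$, using orthogonality of $\psi$ at each stage. Writing $g = \mm{A}{0}{0}{B}$ (where the symplectic condition forces $A = \nu(g)({}^tB)^{-1}$), we have $ug = \mm{A}{XB}{0}{B}$. Since $u$ does not touch the diagonal blocks, $\charf(ug)$ vanishes identically unless $A, B \in M_2(\mathbf{Z}_p)$; in particular the integral is automatically zero unless $\alpha \geq 0$, $\delta \geq 0$, and $b \in \mathbf{Z}_p$. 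Assuming this, the problem reduces to computing $\int_X \chi(u(X))\,\charf(XB \in M_2(\mathbf{Z}_p))\,dX$, where the entries of $XB$ are $(p^\alpha u_{11},\, bu_{11}+p^\delta u_{12},\, p^\alpha u_{12},\, bu_{12}+p^\delta u_{22})$.

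For the $u_{22}$-integration, the $\chi$-dependence is the factor $\psi(u_{22})$, and the only constraint involving $u_{22}$ is $bu_{12}+p^\delta u_{22} \in \mathbf{Z}_p$, which places $u_{22}$ in the coset $p^{-\delta}\mathbf{Z}_p - p^{-\delta}bu_{12}$. Since $\psi$ has conductor $\mathbf{Z}_p$, orthogonality of characters forces the integration to vanish unless $p^{-\delta}\mathbf{Z}_p \subseteq \mathbf{Z}_p$, i.e., $\delta = 0$; in that case the coset shift yields the factor $\psi(-bu_{12})$.

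Having set $\delta = 0$, I then turn to the $u_{12}$-integration. The effective character factor is $\psi(-bu_{12})$ in the case $D \not\equiv 1 \pmod{4}$, and $\psi((1-b)u_{12})$ in the case $D \equiv 1 \pmod{4}$ (accounting for the extra $u_{12}$ in $\chi$). The constraints on $u_{12}$ are $p^\alpha u_{12} \in \mathbf{Z}_p$ and $bu_{11}+u_{12} \in \mathbf{Z}_p$; since $\alpha \geq 0$ and $bu_{11} \in p^{-\alpha}\mathbf{Z}_p$ (from the constraint on $u_{11}$ still to come), the binding constraint is $u_{12} \in \mathbf{Z}_p - bu_{11}$. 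After the substitution $u_{12}' = u_{12}+bu_{11}$ and using that $\psi$ is trivial on $\mathbf{Z}_p$, the integral over $u_{12}'$ contributes $1$ and the shift yields a factor $\psi((b^2-cb)u_{11})$ where $c = 0$ or $c = 1$ in the two cases.

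Finally, the remaining $u_{11}$-integration has the form $\int_{u_{11} \in p^{-\alpha}\mathbf{Z}_p} \psi(C u_{11})\,du_{11}$, where $C = b^2 - D$ or $C = b^2 - b - \tfrac{D-1}{4}$ respectively; this evaluates to $\mathrm{vol}(p^{-\alpha}\mathbf{Z}_p) = p^\alpha$ when $C \in p^\alpha \mathbf{Z}_p$ and to $0$ otherwise, giving precisely the congruence condition stated. The main obstacle is bookkeeping: one has to track the coset shifts propagating from the $u_{22}$ and $u_{12}$ integrations through to the $u_{11}$-character, and verify that the constraints among the four integrality conditions on $XB$ collapse in the claimed way; once this is in hand, the two cases of $D \pmod{4}$ are handled uniformly by the same argument with the character modified in the $u_{12}$-step.
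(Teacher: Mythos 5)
Your proof is correct and takes a genuinely different, more computational route than the paper's. The paper observes that $U_g = \{u \in U(\mathbf{Q}_p) : \charf(ug) \neq 0\}$ is a compact abelian group (assuming $g \in M_4(\mathbf{Z}_p)$), so by orthogonality the integral is either $0$ or equals $\mathrm{vol}(U_g)$ according to whether $\chi$ is nontrivial or trivial on $U_g$; the vanishing conditions are then read off by exhibiting specific test elements of $U_g$ --- the matrix with upper-right block $\mm{0}{0}{0}{p^{-1}}$ to rule out $\delta > 0$, and $\mm{p^{-\alpha}}{-bp^{-\alpha}}{-bp^{-\alpha}}{b^2 p^{-\alpha}}$ to derive the congruence on $b$. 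Your argument instead parametrizes $U$ by $(u_{11},u_{12},u_{22})$ and performs a direct iterated integration, using orthogonality of $\psi$ on a coset at each stage and propagating the resulting phase factors (first $\psi(-bu_{12})$, then $\psi((b^2-cb)u_{11})$ with $c \in \{0,1\}$) into the final $u_{11}$-integral, which yields the same congruence and the volume factor $p^\alpha$. The paper's version is shorter and conceptually cleaner because it sidesteps the bookkeeping of coset shifts; yours is more mechanical, has the advantage that one does not need to guess the right test elements of $U_g$, and makes transparent why the two cases of $D \bmod 4$ differ only in a single phase factor at the $u_{12}$-step. Both arguments depend on the observation (which you make explicitly and the paper makes implicitly) that $\charf(ug)$ vanishes identically unless the diagonal blocks of $g$ are $p$-integral.
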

\begin{proof} The set $U_g$ defined by $U_g=\{u \in U(\mathbf Q_p) |\charf(ug) \neq 0\}$ forms a compact abelian group.  Hence by orthogonality of characters, the integral vanishes unless $\chi$ is identically 1 on $U_g$.  If $\delta > 0$ then the matrix with upper right block $\mm{0}{0}{0}{p^{-1}}$, whose image is nontrivial under $\chi$, is in $U_g$, so the integral vanishes.  So we must have $\delta = 0$, and $\alpha \ge 0$ is forced by the integrality of $ug$.  The matrix $u \in U(\mathbf{Q}_p)$ with upper right block $\mm{p^{-\alpha}}{-bp^{-\alpha}}{-bp^{-\alpha}}{b^2p^{-\alpha}}$ is in $U_g$.  As $\chi(u)$ is $\psi(\frac{b^2-D}{p^\alpha})$ or $\psi(\frac{b^2-b+ \frac{1-D}{4}}{p^\alpha})$ if $D \not\equiv 1 \pmod{4}$ or $D \equiv 1 \pmod{4}$ respectively, we obtain the third condition.  Finally, when all the conditions are satisfied, suppose $u$ has upper right block $\mm{u_{11}}{u_{12}}{u_{12}}{u_{22}}$.  Then one sees that for $u$ to be in $U_g$ we must have $u_{11} \in p^{-\alpha}\mathbf{Z}_p$, and then that $u_{12}$, $u_{22}$ are determined modulo $\mathbf{Z}_p$.  The third condition forces $\chi$ to be trivial on $U_g$, so the integral is the measure of $U_g$, which is $p^\alpha$.
\end{proof}
Although Lemma \ref{lem:nonvanishingconds} treats $g$ of a special form, the integral (\ref{eqn:innerint}) is invariant by right $K_M$-multiplication, so there is in fact no loss of generality.  Note also that when $p$ is inert in $L$, the only $2 \times 2$ matrix satisfying the conditions given in Lemma \ref{lem:nonvanishingconds} is the identity.  

We next need to understand the relationship of the model with the various integrality conditions that appeared in (\ref{eqn:unramcalclhsI}).

\begin{lemma} \label{lem:lambdaint}
Suppose that $g =\mm{m_t}{}{}{m_b} \in M(\mathbf{Q}_p)$, where $m_t,m_b \in M_2(\mathbf{Q}_p)$.  If $\ell(\pi(g)v_0)\neq 0$, then $|\frac{\det(m_t)}{\det(m_b)}| \leq 1$.
\end{lemma}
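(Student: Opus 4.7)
The plan is to leverage the $(U,\chi)$-equivariance of $\ell$ against the $K$-invariance of $v_0$, where $K := \GSp_4(\mathbf{Z}_p)$, in order to constrain which $g \in M(\mathbf{Q}_p)$ can satisfy $\ell(\pi(g)v_0) \neq 0$. Specifically, for any $u \in U(\mathbf{Q}_p)$ with $g^{-1}ug \in K$, the computation
\[
\chi(u)\ell(\pi(g)v_0) = \ell(\pi(ug)v_0) = \ell(\pi(g)\pi(g^{-1}ug)v_0) = \ell(\pi(g)v_0)
\]
forces $\chi(u) = 1$ whenever $\ell(\pi(g)v_0) \neq 0$. Thus the lemma reduces to showing that if $|\det(m_t)/\det(m_b)|_p > 1$, then $\chi$ is nontrivial on the compact open subgroup $L_g := U(\mathbf{Q}_p) \cap gKg^{-1}$.

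Parameterizing $u \in U(\mathbf{Q}_p)$ by its upper-right symmetric $2 \times 2$ block $X \in \Sym_2(\mathbf{Q}_p)$, a direct block-matrix computation shows $g^{-1}ug \in U$ has block $m_t^{-1}Xm_b$, so $L_g$ corresponds bijectively to $\{X \in \Sym_2(\mathbf{Q}_p) : m_t^{-1}Xm_b \in \Sym_2(\mathbf{Z}_p)\}$. The definition of $\chi$ from Section \ref{subsec:nonuniquemodels} rewrites as $\chi(u) = \psi(\tr(TX))$, where $T = \mm{-D}{0}{0}{1}$ or $T = \mm{(1-D)/4}{1/2}{1/2}{1}$ according to the class of $D$ modulo $4$. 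Using the relation $m_b = \nu(g)\,{}^tm_t^{-1}$ forced by $g \in M$, the substitution $Y = m_t^{-1}Xm_b$ converts the condition $\chi|_{L_g} \equiv 1$ into
\[
\tr(T' Y) \in \nu(g)\mathbf{Z}_p\quad\text{for all }Y \in \Sym_2(\mathbf{Z}_p),\qquad T' := {}^tm_t\,T\,m_t.
\]
Testing against $Y = E_{11}, E_{22}, E_{12}+E_{21}$ produces the three integrality conditions $T'_{11},\,T'_{22},\,2T'_{12} \in \nu(g)\mathbf{Z}_p$.

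The finishing step combines these constraints via the identity $4\det(T') = 4T'_{11}T'_{22} - (2T'_{12})^2$ together with the multiplicativity $\det(T') = \det(T)\det(m_t)^2$ to obtain $4\det(T)\det(m_t)^2 \in \nu(g)^2\mathbf{Z}_p$. A case-by-case check of the two choices of $T$ shows $|4\det(T)|_p = |D|_p$, so the inclusion rewrites as $|\det(m_t)|_p^2 \leq |\nu(g)|_p^2/|D|_p$. Because $D$ is squarefree, $\val_p(D) \in \{0,1\}$, so $\val_p(D)/2 \leq 1/2$; since $\val_p$ is integer-valued, this forces the sharp integer inequality $\val_p(\det(m_t)) \geq \val_p(\nu(g))$, i.e., $|\det(m_t)|_p \leq |\nu(g)|_p$. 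Finally, $\det(m_b) = \nu(g)^2/\det(m_t)$ yields
\[
\bigl|\det(m_t)/\det(m_b)\bigr|_p = |\det(m_t)|_p^2/|\nu(g)|_p^2 \leq 1,
\]
as claimed.

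The main delicate point is ensuring that the discriminant-and-rounding argument remains sharp across all residue classes and both parities of $p$; in particular, the half-integer entries of $T$ in the $D \equiv 1\pmod 4$ case are designed exactly so that $|4\det(T)|_p = |D|_p$ still holds at $p = 2$, absorbing the factor of $|4|_p^{-1}$ that the discriminant identity introduces.
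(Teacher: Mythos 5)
Your overall strategy — conjugating $U(\mathbf{Z}_p)$ into $U(\mathbf{Q}_p)$ via $g$, passing to the half-integrality of ${}^tm_t\,T\,m_t/\nu(g)$, then using a determinant bound — is a genuinely different route from the paper, which instead computes the entries of $Y = \lambda^{-1}m_b^{-1}Xm_t$ explicitly and shows by a case-by-case congruence analysis that $Y \notin p\Sym_2(\Z_p)^\vee$. However, your determinant argument has a gap at $p=2$ in the case $D \not\equiv 1 \pmod 4$, exactly where you assert it is sharp.

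The culprit is the claim that $|4\det(T)|_p = |D|_p$ holds for both choices of $T$. For $T = \mm{(1-D)/4}{1/2}{1/2}{1}$ one has $4\det(T) = -D$ and the identity is correct. But for $T = \mm{-D}{}{}{1}$ one has $4\det(T) = -4D$, and at $p = 2$ this gives $|4\det(T)|_2 = |D|_2/4 \neq |D|_2$. Tracing this through, your bound only yields $4D\det(m_t)^2 \in \nu(g)^2\mathbf{Z}_2$, i.e.\ $\val_2\bigl(\det(m_t)/\det(m_b)\bigr) \geq -2 - \val_2(D)$, and after the parity-of-valuation argument this is $\val_2 \geq -2$, which does not preclude $|\lambda|_2 = 2$. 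The issue is structural: the determinant collapses the three lattice constraints $T'_{11}, T'_{22}, 2T'_{12} \in \nu(g)\mathbf{Z}_p$ into a single scalar congruence and thereby discards information. A concrete illustration of the lost precision: with $D = 3$, $p = 2$, taking $\lambda = 1/2$, $4\det(\lambda Y) = -12\lambda^2 = -3 \in \mathbf{Z}_2$ passes the determinant test, yet the actual condition $\lambda Y \in \Sym_2(\mathbf{Z}_2)^\vee$ forces $y^2 \equiv 3 \pmod 4$, which has no solution. You need the entry-by-entry analysis (as in the paper, using that $D$ is squarefree and $D \not\equiv 1 \pmod 4$ makes $y^2 \equiv D \pmod{p^2}$ insoluble when $p \mid 2D$) to close this case; the determinant alone cannot.

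Away from $p = 2$ your argument is correct and pleasantly coordinate-free; the rounding step via the evenness of $\val_p(\det(m_t)/\det(m_b))$ together with $\val_p(D) \in \{0,1\}$ handles the odd ramified primes cleanly. The repair at $p = 2$, $D \not\equiv 1 \pmod 4$ requires replacing the discriminant inequality with the paper's direct congruence analysis of $Y_{11} = (y^2-D)/x$, $Y_{22} = x$, $2Y_{12} = 2y$.
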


\begin{proof}
For $u \in U(\mathbf{Z}_p)$, we have
\[ \ell(\pi(g) v_0) = \ell(\pi(g) u v_0) = \chi(gug^{-1}) \ell(\pi(g) v_0).\]
Thus if $\ell(\pi(g) v_0) \neq 0$, we must have $\chi(gug^{-1}) = 1$ for all $u \in U(\mathbf{Z}_p)$.  Define $X = \mm{-D}{}{}{1}$ or $\mm{\frac{1-D}{4}}{\frac{1}{2}}{\frac{1}{2}}{1}$ if $D \not\equiv 1 \pmod{4}$ or $D \equiv 1 \pmod{4}$, respectively.  Then if $u = \mm{\mathbf{1}_2}{n_u}{}{\mathbf{1}_2}$ for $n_u \in M_2(\mathbf{Z}_p)$, we have $\chi(gug^{-1}) = \psi(\tr(m_tn_um_b^{-1}X)) = \psi(\tr(n_um_b^{-1}Xm_t))$. Hence $\ell(\pi(g) v_0) \neq 0$ implies $m_b^{-1}Xm_t$ is half-integral.  We will use this to show that $|\frac{\det(m_t)}{\det(m_b)}| \leq 1$.

Assume that $g$ is written in the form specified by (\ref{eqn:mn24eq}).  We may assume without loss of generality that $m_b$ is upper-triangular, i.e. $b=0$, since the hypotheses and results are insensitive to right-multiplication by an element of $M(\mathbf{Z}_p)$.  Then we define $Y$ to be the matrix
\begin{equation}\label{Yeqn}Y=\lambda^{-1} m_b^{-1}Xm_t = \mm{\frac{(\frac{c}{a})^2-D}{\frac{d}{a}}}{\frac{c}{a}}{\frac{c}{a}}{\frac{d}{a}} \textrm{ or }\mm{\frac{\frac{1-D}{4}+\frac{c}{a}+(\frac{c}{a})^2}{\frac{d}{a}}}{\frac{1}{2}+\frac{c}{a}}{\frac{1}{2}+\frac{c}{a}}{\frac{d}{a}}\end{equation}
in the respective cases $D \not\equiv 1 \pmod{4}$ or $D\equiv 1 \pmod{4}$.  We write $x = \frac{d}{a}$ and $y = \frac{c}{a}$ so that
\begin{equation}\label{Ydef}Y = \mm{\frac{y^2-D}{x}}{y}{y}{x} \textrm{ or } \mm{\frac{y^2+y+\frac{1-D}{4}}{x}}{\frac{1}{2}+y}{\frac{1}{2}+y}{x}.\end{equation}

We denote the set of half-integral $2 \times 2$ symmetric matrices by $\Sym_2(\Z_p)^\vee$.  Above we showed that $m_b^{-1}Xm_t=\lambda Y \in \Sym_2(\Z_p)^\vee$.  Since $|\frac{\det(m_t)}{\det(m_b)}| = |\lambda|^2$, we need $|\lambda| \leq 1$.  Thus it suffices to check that $Y \notin p \Sym_2(\Z_p)^\vee$.

First assume $D \not\equiv 1 \pmod{4}$.  If $\frac{y^2-D}{x}$ and $x\in p\mathbf{Z}_p$, then $y^2-D \equiv 0 \pmod{p^2}$.  It follows that if $p\nmid 2D$, $y$ is a unit. Hence $Y \notin p\Sym_2(\Z_p)^\vee$.  If $p|2D$, the congruence $y^2 -D \equiv 0 \pmod{p^2}$ has no solution since $D$ is square-free and $D \not\equiv 1 \pmod{4}$.  We again obtain $Y \notin p\Sym_2(\Z_p)^\vee$.

Now assume $D \equiv 1 \pmod{4}$.  As above, if $\frac{y^2+y+\frac{1-D}{4}}{x},x \in p\mathbf{Z}_p$, we have $y^2+y \equiv \frac{D-1}{4} \pmod{p^2}$.  In particular, $y$ is $p$-integral.  Thus if $p = 2$, the entry $\frac{1}{2} +y$ of $Y$ shows that $Y \notin p\Sym_2(\Z_p)^\vee$.  If $p >2$, we rearrange to obtain $(y + \frac{1}{2})^2 \equiv D \pmod{p^2}$.  Just as in the $D \not\equiv 1 \pmod{4}$ case, if $p |D$ then this congruence has no solution since $D$ is square free, and if $p\nmid D$ then $y + \frac{1}{2}$ is a $p$-adic unit.  So $Y \notin p\Sym_2(\Z_p)^\vee$ in all cases, finishing the proof.
\end{proof}

We can now prove the following.
\begin{lemma} \label{lem:finalIcalc} If $g \in M(\mathbf{Q}_p)$ and $\ell(\pi(g) v_0) \neq 0$, then
\[\int_{U(\mathbf{Q}_p)}{\chi(u)\charf(ug) \,du} = |\nu(g)|^{-1}\Delta_0(g) \charf(\val_p(g) = 0)\]
\end{lemma}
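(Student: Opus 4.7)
The plan is to use right $K_M$-invariance to put $g$ into a convenient normal form, apply Lemma~\ref{lem:nonvanishingconds} to pin down the left-hand side, and then carry out a case analysis in which Lemma~\ref{lem:lambdaint} handles the subtle support conditions. Both sides of the identity are invariant under right multiplication by $K_M = \GSp_4(\mathbf{Z}_p) \cap M(\mathbf{Q}_p)$: the left-hand side because $K_M \subset \GL_4(\mathbf{Z}_p)$ preserves $M_4(\mathbf{Z}_p)$, and the right-hand side because $|\nu|$, $\charf(\val_p(\cdot) = 0)$, and $\Delta_0'$ are all manifestly $K_M$-invariant. I may therefore assume $m_b$ has the upper triangular form $\mm{p^\alpha}{b}{0}{p^\delta}$; the $\GSp_4$-condition $m_t m_b^t = \nu(g) \mathbf{1}_2$ then determines $m_t = \nu(g) (m_b^t)^{-1}$ explicitly, and one reads off $\det(m_t)/\det(m_b) = \nu(g)^2/p^{2(\alpha+\delta)}$.

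By Lemma~\ref{lem:nonvanishingconds}, the left-hand side vanishes unless $\delta = 0$, $\alpha \geq 0$, and the quadratic congruence on $b$ modulo $p^\alpha$ from the definitions \eqref{delta_0prime}--\eqref{delta_0prime1mod4} holds, in which case it equals $p^\alpha$. In this ``good'' case, comparing with \eqref{delta_0prime}--\eqref{delta_0prime1mod4} gives $\Delta_0'(m_b) = 1$. The hypothesis $\ell(\pi(g)v_0) \neq 0$ together with Lemma~\ref{lem:lambdaint} then yields $|\nu(g)|^2 p^{2\alpha} = |\det(m_t)/\det(m_b)| \leq 1$, i.e., $\val_p(\nu(g)) \geq \alpha$. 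This integrality makes the explicit $m_t$ integral, so $g \in M_4(\mathbf{Z}_p)$, and since $m_b$ has the unit entry $1$ at position $(2,2)$, $\val_p(g) = 0$. Assembling, $|\nu(g)|^{-1}\Delta_0(g)\charf(\val_p(g) = 0) = |\nu|^{-1} \cdot |\nu|p^\alpha \cdot 1 = p^\alpha$, matching the left-hand side.

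It remains to show that the right-hand side vanishes in all of the remaining ``bad'' cases. If $\alpha < 0$ or $\delta < 0$, then $m_b \notin M_2(\mathbf{Z}_p)$, so the indicator $\charf(\val_p(g) = 0)$ kills it. If the divisibility or congruence conditions defining $\Delta_0'(m_b)$ fail, then $\Delta_0(g) = 0$ directly. The only delicate subcase is $\delta > 0$ with $\Delta_0'(m_b) \neq 0$: here Lemma~\ref{lem:lambdaint} gives $\val_p(\nu(g)) \geq \alpha + \delta$, and a direct computation of the valuations of the entries of $m_t$ and $m_b$ (using that $\alpha \geq \delta$ and $\val_p(b) \geq \delta$ are forced by $\Delta_0'(m_b) \neq 0$) shows $\val_p(g) = \min(\val_p(\nu(g)) - \alpha,\, \delta) = \delta > 0$, again killing the support indicator. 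This last subcase is the principal obstacle: it is the only place in the vanishing argument where the hypothesis $\ell(\pi(g)v_0) \neq 0$ is genuinely used, and the lemma is at bottom a statement about how the integrality input from Lemma~\ref{lem:lambdaint} conspires with the support of $\Delta_0$ to cancel out what might otherwise look like a discrepancy.
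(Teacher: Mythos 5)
Your proof is correct and follows essentially the same approach as the paper: reduce to the normal form $m_b = \left(\begin{smallmatrix} p^\alpha & b \\ & p^\delta \end{smallmatrix}\right)$ by right $K_M$-invariance, apply Lemma~\ref{lem:nonvanishingconds} to determine the left-hand side, and use Lemma~\ref{lem:lambdaint} to tie the integrality of $m_t$ (and hence $\val_p(g)$) to the support of $\Delta_0$. The only small quibble is your closing remark: the hypothesis $\ell(\pi(g)v_0)\neq 0$ is in fact \emph{also} essential in the good case (without it, $\val_p(\nu(g))$ could be less than $\alpha$, making $m_t$ non-integral and the right side zero while the left side equals $p^\alpha$), and in subcase $\delta>0$ with $\Delta_0'(m_b)\neq 0$ one could alternatively dispense with Lemma~\ref{lem:lambdaint} by splitting on whether the factor $\charf\!\left(\left|\det(m_t)/\det(m_b)\right|\le 1\right)$ already vanishes; neither observation affects the validity of your argument.
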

\begin{proof}  Write $g = \mm{m_t}{}{}{m_b}$, and assume without loss of generality that $m_b = \mm{p^\alpha}{b}{}{p^\delta}$.  Then when both sides are non-zero, Lemma \ref{lem:nonvanishingconds} shows that they are equal.  It remains to check that the vanishing conditions on the two sides are the same.  If the right-hand side is non-zero, $m_b \in M_2(\mathbf{Z}_p)$.  Since $\ell(\pi(g) v_0)\neq 0$, Lemma \ref{lem:lambdaint} verifies that $|\frac{\det(m_t)}{\det(m_b)}| \leq 1$.  It follows from this inequality and the condition $\val_p(g) = 0$ that some entry of $m_b$ is a $p$-adic unit.  The conditions defining $\Delta_0(g)$ then force $\delta=0$ and either the congruence $b^2 \equiv D \pmod{p^\alpha}$ or $b^2+b \equiv \frac{D-1}{4} \pmod{p^\alpha}$.  Conversely, these conditions verify that $m_b \in M_2(\mathbf{Z}_p)$ and provide a unital entry in $m_b$.  Using Lemma \ref{lem:lambdaint}, we have $|\frac{\det(m_t)}{\det(m_b)}| \leq 1$, so $m_t$ is integral.  It follows that $\val_p(g) = 0$ and $\Delta_0(g)\ne 0$. \end{proof}

We may now compute
\begin{align*} \int_{\GSp_4(\mathbf{Q}_p)}{\Delta^s(g) \ell(\pi(g) v_0) \,dg}&= \int_{M(\mathbf{Q}_p)}\delta_P^{-1}(g)|\nu(g)|^s\ell(\pi(g)v_0)\left\{\int_{U(\mathbf{Q}_p)}\chi(u)\charf(ug) \,du\right\}\,dg \\
&= \int_{M(\mathbf{Q}_p)}\delta_P^{-1}(g)|\nu(g)|^s|\nu(g)|^{-1}\Delta_0(g)\charf(\val_p(g) = 0)\ell(\pi(g)v_0)\,dg.\end{align*}
This last line is what we obtained in (\ref{eqn:unramcalclhsI}) for $(1-\omega_\pi(p)p^{2-2s})I_p(\ell,\frac{s-1}{2})$.  Thus
\[\int_{\GSp_4(\mathbf{Q}_p)}{\Delta^s(g) \ell(\pi(g) v_0) \,dg} = (1-\omega_\pi(p)p^{2-2s})I_p\left(\ell,\frac{s-1}{2}\right),\]
completing the unramified calculation for the integral $I$.

We now treat the integral $J$ by reducing the unramified calculation to the one completed above.  We have already computed the right-hand side of (\ref{eqn:mainunrameq}).  We thus need to compute
\[J_p\(\ell, \frac{s-1}{2}\) = \int_{N(\mathbf{Q}_p)Z(\mathbf{Q}_p)\backslash \GL_{2,L}^*(\mathbf{Q}_p)} f_p^\Phi\(g,\frac{s-1}{2}\) \ell(\pi(g)v_0)\,dg.\]
Substituting the definition (\ref{f_pdef}), applying the Iwasawa decomposition, and rearranging, we obtain
\[J_p\(\ell,\frac{s-1}{2}\) = \int_{T_L^*(\mathbf{Q}_p)} \delta_{B_L^*}^{-1}(g)|\nu(g)|^{s-1}\Phi(f_2g)\ell(\pi(g)v_0)\,dg.\]
Here $T_L^*$ and $B_L^*$ denote the torus and Borel of $\GL_{2,L}^*$.  The explicit description of $g$ in (\ref{GGmatrix2}) or (\ref{GGmatrix3}) shows that if $\Phi(f_2g) = 1$, the bottom right block of $g$ is integral (and conversely).  If $\ell(\pi(g)v_0) \ne 0$, Lemma \ref{lem:lambdaint} ensures that $|\frac{\det(m_t)}{\det(m_b)}| \leq 1$ and thus $g \in M_4(\mathbf{Z}_p)$.  We may reduce to the case that $\val_p(g)=0$ by the same process used in (\ref{eqn:ipsum}), which gives
\[J_p\(\ell,\frac{s-1}{2}\) = \left(1-\omega_\pi(p)p^{2-2s}\right)^{-1}\int_{T_L^*(\mathbf{Q}_p)} \delta_P^{-1}(g)|\nu(g)|^{s-1}\charf(\val_p(g)=0)\ell(\pi(g)v_0)\,dg.\]
The unramified calculation now reduces to checking the following proposition.
\begin{proposition}
We have
\begin{align}&\int_{M(\mathbf{Q}_p)}\delta_P^{-1}(g)|\nu(g)|^{s-1}\Delta_0(g)\charf(\val_p(g)=0)\ell(\pi(g) v_0) \,dg \label{eqn:jinteqL}\\ = &\int_{T_L^*(\mathbf{Q}_p)} \delta_{B_L^*}^{-1}(g)|\nu(g)|^{s-1}\charf(\val_p(g)=0)\ell(\pi(g)v_0)\,dg.\label{eqn:jinteq}\end{align}
\end{proposition}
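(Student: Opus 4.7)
The strategy is to reduce both sides to a common explicit discrete sum via the Iwasawa decomposition and the sphericity of $v_0$, then match them via an explicit bijection.

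For the LHS, right-invariance of $\ell(\pi(g)v_0)$ under $K_M = M(\Z_p)$ (which follows from $v_0$ being spherical) combined with the Iwasawa decomposition of $\GL_2(\Q_p)$ applied to $m_b$ reduces the integral over $M(\Q_p)$ to a sum over representatives with $m_b = \mm{p^\alpha}{b}{0}{p^\delta}$. Using the $Z$-action to renormalize, I would take $\delta = 0$. The condition $\Delta_0'(m_b) \neq 0$ restricts to $\alpha \geq 0$ with $b^2 \equiv D \pmod{p^\alpha}$ (or the $D \equiv 1 \pmod 4$ analog), while $\val_p(g)=0$ and Lemma~\ref{lem:lambdaint} control $m_t$. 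The RHS is treated analogously: parametrize $T_L^*(\Q_p)$ by pairs $(u_1,u_4) \in L_p^\times \times L_p^\times$ with $u_1 u_4 \in \Q_p^\times$, and use right-invariance under $K_{L,p}^* := T_L^*(\Q_p) \cap \GSp_4(\Z_p)$ to pass to a discrete sum.

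The key step is the explicit bijection between the two sets of representatives. Given $(\alpha,b)$ with $b^2 \equiv D \pmod{p^\alpha}$, I would associate $u_4 = b + \sqrt{D} \in L_p$ (with the appropriate analog when $D \equiv 1 \pmod 4$). The multiplication-by-$u_4$ matrix $L(u_4) = \mm{b}{D}{1}{b}$ is right $\GL_2(\Z_p)$-equivalent to $\mm{p^\alpha}{b}{0}{1}$ precisely when $(b^2 - D)/p^\alpha$ is a unit, so $\alpha = \val_p(\nm u_4)$ and the correspondence is faithful. A direct computation on $T_L^* \subset M$ gives $\delta_P(g) = |\nu(g)/\nm(u_4)|^3$ and $\delta_{B_L^*}(g) = |\nu(g)/\nm(u_4)|^2$. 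Their ratio combined with the factor $|\det(m_t)/\det(m_b)|^{1/2}$ appearing in $\Delta_0(g)$, together with the Jacobian of the inclusion $T_L^* \hookrightarrow M$ modulo $K_{L,p}^*$ versus $K_M$, should yield the claimed equality term by term.

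The principal obstacle is the case analysis. The bijection is essentially trivial when $p$ is inert in $L$ (where $b^2 \equiv D \pmod p$ forces $\alpha = 0$ and only the trivial term contributes), more intricate when $p$ splits in $L$ (where $L_p \cong \Q_p \times \Q_p$ and the multiple congruence roots must be matched with the two independent valuations in $L_p$), and subtler still when $p \mid 2D$ is ramified in $L$. The alternative integral structure from (\ref{vd1mod4}) and the alternative congruence $b^2 - b \equiv \tfrac{D-1}{4} \pmod{p^\alpha}$ must be used throughout in the case $D \equiv 1 \pmod 4$, requiring a parallel argument with slightly different bookkeeping.
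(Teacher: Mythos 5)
Your overall strategy -- reduce both sides to discrete coset sums via Iwasawa, then exhibit a bijection of cosets induced by $T_L^*\hookrightarrow M$ and match modulus characters -- is the right one and is essentially the paper's. The paper first proves an auxiliary claim (that for $h\in\GL_2(\Q_p)$ the matrix $h^{-1}X(\det(h)\,{}^th^{-1})$ is half-integral iff $\Delta_0'(h)\neq 0$) to establish injectivity, and then proves surjectivity by hand; your $\delta_P^{-1}(m)|\det(m_t)/\det(m_b)|^{1/2}=\delta_{B_L^*}^{-1}(m)$ and the coset--sum framing are all present in the paper's argument.

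However, there is a genuine gap in your description of the bijection, and it is precisely the nontrivial point of the proof. You write that $L(u_4)=\mm{b}{D}{1}{b}$ is right $\GL_2(\Z_p)$-equivalent to $\mm{p^\alpha}{b}{0}{1}$ ``precisely when $(b^2-D)/p^\alpha$ is a unit,'' and then set $u_4 = b+\sqrt D$ and conclude $\alpha = \val_p(\nm u_4)$. But the relevant cosets on the $M$-side are parametrized by $(\alpha,b,\lambda)$ with $b^2\equiv D\pmod{p^\alpha}$ -- this only says $\val_p(b^2-D)\geq\alpha$, not $=\alpha$. For such a coset, $(b^2-D)/p^\alpha$ can be a non-unit, in which case $L(b+\sqrt D)$ lands in the \emph{wrong} coset. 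The fix, which is the actual content of the paper's proof, is to observe that $b$ is only well-defined modulo $p^\alpha$, and to replace $b$ by $b+p^\alpha\gamma$ for a suitably chosen $\gamma\in\Z_p$ so that $D-(b+p^\alpha\gamma)^2=(\gamma'-\gamma^2 p^\alpha+2b\gamma)p^\alpha$ has valuation exactly $\alpha$. When $p\nmid 2D$ the factor $2b$ is a unit (since $b^2\equiv D\pmod{p^\alpha}$ with $\alpha\geq 1$ forces $b$ to be a unit), so such a $\gamma$ exists; the remaining cases $p\mid D$, $p=2$, and the $D\equiv 1\pmod 4$ variant each require a short separate argument, which the paper carries out. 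Your proposal flags this case analysis as an ``obstacle'' but does not resolve it, and the specific map you propose fails before the case split even begins.
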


\begin{proof}
We claim that if $h \in \GL_2(\Q_p)$, then the matrix $Y$ defined by $Y=h^{-1}X(\det(h)\,^th^{-1})$ is half-integral if and only if $\Delta_0'(h) \neq 0$, where $X$ is the matrix defined in Lemma \ref{lem:lambdaint}.  To check this, it suffices to assume that $h = \mm{d}{-c}{}{a}$ is upper-triangular.  Then $Y$ was computed in (\ref{Yeqn}), and an inspection of the matrix on the right-hand side of (\ref{Yeqn}) verifies the claim.

We next use the claim to verify that if the integrand in (\ref{eqn:jinteq}) is non-zero, then so is the integrand in (\ref{eqn:jinteqL}).  This amounts to checking that if $\mm{m_t}{}{}{m_b}$ is in $T_L^*(\Q_p)$, then $\Delta_0'(m_b) \neq 0$.  Using the explicit form of the matrix $m_b$ specified in (\ref{GGmatrix2}) or (\ref{GGmatrix3}), one checks immediately that $m_b X (\det(m_b)\,^tm_b^{-1}) = X$ is half-integral, which implies by the claim that $\Delta_0'(m_b) \neq 0$.

We claim that in fact there is a bijection, induced by the embedding $T_L^* \subseteq M$, from the elements of $T_L^*(\mathbf{Q}_p)/T_L^*(\mathbf{Z}_p)$ for which the integrand on the right is nonzero to elements of $M(\mathbf{Q}_p)/M(\mathbf{Z}_p)$ for which the integrand on the left is nonzero.  We have just observed the existence of an injective map and need only check the surjectivity.  Maintain the notation of (\ref{eqn:mn24eq}).  By Lemma \ref{lem:lambdaint}, if $g \in M(\mathbf{Q}_p)$ satisfies $\ell(\pi(g)v_0) \ne 0$ and $\val_p(g)=0$, we have $|\frac{\det(m_t)}{\det(m_b)}| \leq 1$, so some entry of the lower right $2\times 2$ block of $g$ is a unit.  As in the proof of Lemma \ref{lem:finalIcalc}, $\Delta_0(g) \ne 0$ forces $g$ to be equivalent by right $M(\mathbf{Z}_p)$-multiplication to an element of $M_4(\mathbf{Z}_p) \cap M(\mathbf{Q}_p)$ with $m_b = \mm{p^\alpha}{b}{}{1}$, where $b^2 \equiv D \pmod{p^\alpha}$ or $b^2-b\equiv \frac{D-1}{4}\pmod{p^\alpha}$ depending on $D \pmod{4}$ as before.  The coset in $M(\mathbf{Q}_p)/M(\mathbf{Z}_p)$ is determined by $\alpha,b$, and $\lambda$.  (Note that changing $b$ by a multiple of $p^\alpha$ does not change the coset.)  Given choices of $\alpha,b$, and $\lambda$ meeting the aforementioned conditions, we will produce an element of $T_L^*(\mathbf{Q}_p)$ equivalent to $m$ by right $M(\mathbf{Z}_p)$-multiplication.

Assume that $D \not\equiv 1 \pmod{4}$.  We will choose an element $m_b' \in M_2(\mathbf{Z}_p)$ such that if one sets $m_t'=\lambda \det(m_b') {}^t(m_b')^{-1}$, the matrix $m' = \mm{m_t'}{}{}{m_b'}$ is in $T_L^*(\mathbf{Q}_p)\cap mM(\mathbf{Z}_p)$.  If $p|D$, the relation $b^2 \equiv D \pmod{p^\alpha}$ forces $\alpha = 0$ or $\alpha = 1$ and (possibly after changing $m$ within its coset) $b=0$.  Then $m_b' = \mm{D}{}{}{1}$ puts $m'$ in the same coset as $m$.  If $p=2$ and $D$ is odd, the condition $D \not\equiv 1 \pmod{4}$ again forces $\alpha=0$ or $\alpha =1$.  The case $\alpha=0$ is trivial, and if $\alpha = 1$ we may assume $b=1$.  Setting $m_b' = \mm{1}{D}{1}{1}$ again puts $m'$ in the same coset as $m$.  Finally, we assume $p\nmid 2D$.  We define an element $m'(\gamma) \in T_L^*(\mathbf{Q}_p)$, where $\gamma \in \mathbf{Z}_p$, by defining the lower right block to be $m_b'(\gamma)=\mm{b+p^\alpha\gamma}{D}{1}{b+p^\alpha\gamma}$ and the upper left to be $m_t'(\gamma)=\lambda \det(m_b'(\gamma)) {}^t(m_b'(\gamma))^{-1}$.  Then after applying column operations, $m_b$ transforms to $\mm{D-(b+\gamma p^\alpha)^2}{b+\gamma p^\alpha}{}{1}$.  We have
\[D-(b+\gamma p^\alpha)^2 = D-b^2-\gamma^2p^{2\alpha}+2b\gamma p^\alpha = (\gamma'-\gamma^2p^\alpha+2b\gamma)p^\alpha,\]
where $\gamma'p^\alpha = D-b^2$.  Since $p \nmid 2D$, $2b$ is a unit.  Then for any $\gamma'$, one can choose $\gamma$ so that $\gamma'+2b\gamma$ and thus $\gamma'-\gamma^2p^\alpha+2b\gamma$ is a unit.  Multiplying the first column of $\mm{D-(b+\gamma p^\alpha)^2}{b+\gamma p^\alpha}{}{1}$ by the inverse of this unit and then subtracting $\gamma$ times the first column from the second yields $\mm{p^\alpha}{b}{}{1}$ as needed.  

Now assume that $D \equiv 1 \pmod{4}$.  Examining (\ref{GGmatrix3}), the matrix $m_b'$ must now be of the form $\mm{\epsilon}{\frac{D-1}{4}\eta}{\eta}{\epsilon-\eta}$.  Consider the matrix $m_b'(\gamma) \in M_2(\mathbf{Z}_p)$ defined for $\gamma \in \mathbf{Z}_p$ by $m_b'(\gamma)=\mm{b+\gamma p^\alpha}{\frac{D-1}{4}}{1}{b+\gamma p^\alpha-1}$.  By applying column operations one can transform this into $\mm{(b+\gamma p^\alpha)^2-(b+\gamma p^\alpha)+\frac{1-D}{4}}{b}{}{1}$.  We have
\[(b+\gamma p^\alpha)^2-(b+\gamma p^\alpha)+\frac{1-D}{4} = b^2-b+\frac{1-D}{4}+(2b-1)\gamma p^\alpha+\gamma^2p^{2\alpha} =(\gamma'+(2b-1)\gamma+\gamma^2p^\alpha)p^\alpha,\]
where $\gamma'p^\alpha = b^2-b+\frac{1-D}{4}$.  If $2b-1$ is a unit, then $\gamma$ may be chosen so that $\gamma'+(2b-1)\gamma+\gamma^2p^\alpha$ is a unit and the argument proceeds as above.  If $p|(2b-1)$ (which means $p\ne 2$), the transformed relation $4b^2-4b+1 = (2b-1)^2 \equiv D \pmod{p^\alpha}$ forces either $\alpha=0$ or $p|D$ and $\alpha=1$.  The former case is trivial, and in the latter case we claim that we may use $\mm{b}{\frac{D-1}{4}}{1}{b-1}$.  Indeed, if we subtract $b-1$ times the left column from the right, switch columns, and multiply by the left column by $-1$, we obtain $\mm{b^2-b+\frac{1-D}{4}}{b}{}{1}$.  We have $\ord_p(b^2-b+\frac{1-D}{4})=\ord_p(4b^2-4b+1-D)=1$ since $p^2|(2b-1)^2$ and $p$ exactly divides $D$.

It remains to check that for each $m \in T_L^*(\mathbf{Q}_p)$ for which the right-hand side integrand is non-zero, the two integrands are identical when evaluated at $m$.  In other words, we need $\delta_P^{-1}(m)\left|\frac{\det(m_t)}{\det(m_b)}\right|^{1/2} = \delta_{B_L^*}^{-1}(m)$, which is clear by direct calculation.

\end{proof}

\section{Ramified calculation}\label{ramified}
In this section we will prove Propositions \ref{KS:ram} and \ref{KS:arch}.  Let $p$ be a place of $\Q$, either finite or infinite.  Suppose that $\ell$ is a $(U,\chi)$-functional on $V_{\pi_p}$ and $\Phi_p$ is a Schwartz function on $W_4(\Q_p)$.  (We use the notation $\mathbf{Q}_\infty=\R$.)  For a smooth vector $v_1$ in $V_{\pi_p}$, consider the integral
\begin{equation}\label{eqn:unfoldfinfty}J_{p}(\ell, v_1, s) = \int_{N(\mathbf{Q}_p)Z(\mathbf{Q}_p)\backslash {\GL_{2,L}^*}(\mathbf{Q}_p)}{f_p^{\Phi_p}(g,s)\ell(\pi(g)v_1) \,dg}.\end{equation}

We decompose the domain of the right-hand side integral in (\ref{eqn:unfoldfinfty}) as follows.  Let
\[T' = \set{t \in \GSp_4| t = \diag(y,y,1,1), y \in \mathbf{G}_m}.\]
We have
\begin{align}\label{badfte} J_p(\ell, v_1, s) &= \int_{N(\Q_p) \backslash \GL_{2,L}^*(\Q_p)}{|\nu(g)|^{2s} \Phi_p(f_2 g) \ell(\pi(g) v_1) \,dg} \nonumber \\ &= \int_{T'(\Q_p)}|\nu(t)|^{2s} \int_{N(\Q_p) \backslash \SL_{2,L}(\Q_p)}{\Phi_p(f_2 xt) \ell(\pi(xt) v_1) \,dx}\,dt \\ \label{badfte2} &= \int_{T'(\Q_p)}|\nu(t)|^{2s-2} \int_{N(\Q_p) \backslash \SL_{2,L}(\Q_p)}{\Phi_p(f_2 x) \ell(\pi(tx) v_1) \,dx}\,dt \\ \label{badfte3} &= \int_{N(\Q_p) \backslash \SL_{2,L}(\Q_p)}\Phi_p(f_2 x)\int_{T'(\Q_p)}{|\nu(t)|^{2s-2}  \ell(\pi(tx) v_1) \,dt}\,dx. \end{align}
Here, to go from (\ref{badfte}) to (\ref{badfte2}), we applied the change of variables $x \mapsto txt^{-1}$ and used $f_2 t = f_2$.

Before proving Propositions \ref{KS:ram} and \ref{KS:arch}, we consider one more preparatory fact.  Let $v$ be a smooth vector in $V_{\pi_p}$, let $\eta$ be a Schwartz function on $\Q_p$, and set
\[u(z) = \left(\begin{array}{cc|cc}1& &0&0 \\ & 1&0&z\\ \hline & &1& \\ & & &1\end{array}\right).\]
Suppose
\begin{equation}\label{v1Def} v_1 = \int_{\mathbf G_a(\mathbf{Q}_p)}{\eta(z) \pi(u(z)) v \,dz}. \end{equation}
Then if $t = \diag(y,y,1,1)$,
\begin{align}\label{FTeqn} \ell( \pi(t) v_1 ) &= \left\{\int_{\mathbf G_a(\mathbf{Q}_p)}{\eta(z)\chi(tu(z)t^{-1})\, dz}\right\} \ell(t v) \nonumber \\ &= \left\{\int_{\mathbf G_a(\mathbf{Q}_p)}{\eta(z)\psi(yz)\, dz}\right\} \ell(t v) \\ &= \hat{\eta}(y) \ell(t v). \nonumber \end{align}
Here, $\hat{\eta}$ is the Fourier transform of $\eta$, defined to be the expression in braces in (\ref{FTeqn}).

The following lemma gives the analogue of Proposition \ref{KS:ram} for the integral $J$.  Recall that $\mathcal{H}_p$ denotes the Hecke algebra of locally constant compactly supported functions on $\GSp_4(\Q_p)$.
\begin{proposition}\label{KS:ram1} Suppose $p$ is a finite place.  For any $v$ in $V_{\pi_p}$, there exists a Schwartz function $\Phi_p$ on $W_4(\Q_p)$ and an element $\beta_p$ of $\mathcal{H}_p$ such that for all $(U,\chi)$ functionals $\ell$
\[J_{p}(\ell,\beta_p * v,s) = \int_{N(\mathbf{Q}_p)Z(\mathbf{Q}_p)\backslash {\GL_{2,L}^*}(\mathbf{Q}_p)}{f_p^{\Phi_p}(g,s)\ell(\pi(g)(\beta_p *v)) \,dg} = \ell(v).\]
\end{proposition}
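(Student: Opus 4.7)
The plan is to exploit the decomposition (\ref{badfte3}), which writes $J_p(\ell, v_1, s)$ as a double integral over $T'(\Q_p)$ and $N(\Q_p)\backslash\SL_{2,L}(\Q_p)$, together with the preparatory identity (\ref{FTeqn}). The idea is to choose $\beta_p$ so that $v_1 = \beta_p * v$ has the form (\ref{v1Def}) with $\hat\eta$ localized near $y = 1$, and to choose $\Phi_p$ so that the $x$-integral is localized near the identity modulo $N$. The smoothness of $v$, the triviality of $\chi$ on $N$, and the fact that $T'$ normalizes $N$ will then force the integral to collapse to a constant multiple of $\ell(v)$, which can be normalized to $1$ by rescaling $\beta_p$.

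Concretely, fix a compact open subgroup $K_v \subseteq \GSp_4(\Z_p)$ fixing $v$, and choose $M \geq 1$ so large that $\diag(y,y,1,1) \in K_v$ for all $y \in 1 + p^M\Z_p$. Choose a locally constant, compactly supported function $\eta$ on $\Q_p$ whose Fourier transform $\hat\eta$ is supported in $1 + p^M\Z_p$; for instance, $\eta(z) = \psi(-z)\charf(z \in p^{-M}\Z_p)$ works up to normalization. Construct $\beta_p \in \mathcal{H}_p$ as a finite linear combination of characteristic functions of cosets $u(z_i) K'$, for a compact open subgroup $K' \subseteq K_v$ on which $u(\cdot)$ is parametrized by a small $p^{M'}\Z_p$, chosen so that $\beta_p * v = \int_{\Q_p}\eta(z)\pi(u(z))v\,dz$; this is a standard decomposition using the local constancy of $\eta$ and the $K'$-invariance of $v$.

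Next, let $K_1 \subseteq \GSp_4(\Z_p)$ be a compact open subgroup fixing $v_1 = \beta_p * v$, and take $\Phi_p(w) = \charf(w \in f_2 + p^N W_4(\Z_p))$ for $N$ so large that $\Phi_p(f_2 x) \neq 0$ forces $x \in N(\Q_p) K_1$ (such an $N$ exists by the Iwasawa decomposition of $\SL_{2,L}(\Q_p)$ applied to the bottom row). Writing such an $x$ as $n_x k_x$ with $n_x \in N(\Q_p)$ and $k_x \in K_1$, and using that $\chi$ is trivial on $N(\Q_p)$, that $T'$ normalizes $N$, and that $k_x$ fixes $v_1$, one obtains $\ell(\pi(tx) v_1) = \ell(\pi(t) v_1)$ on the support of $\Phi_p(f_2 \cdot)$. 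Combined with (\ref{FTeqn}) and the parametrization $T'(\Q_p) \cong \Q_p^\times$ via $y \mapsto t_y = \diag(y,y,1,1)$, this reduces the integral to
\[
J_p(\ell, v_1, s) = V \int_{\Q_p^\times} |y|^{2s-2}\, \hat\eta(y)\, \ell(\pi(t_y) v)\, d^\times y,
\]
where $V = \int_{N\backslash \SL_{2,L}}\Phi_p(f_2 x)\,dx > 0$. Since $\hat\eta$ is supported in $1 + p^M\Z_p$, on which $|y| = 1$ and $\pi(t_y) v = v$, the integrand simplifies to $\hat\eta(y)\ell(v)$, yielding $J_p(\ell, v_1, s) = V c_\eta \ell(v)$ for the nonzero constant $c_\eta = \int\hat\eta(y)\,d^\times y$. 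Rescaling $\beta_p$ by $(V c_\eta)^{-1}$ gives the claim.

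The main obstacle is the combinatorial bookkeeping around the coordinated choice of $\eta$, $K_v$, $N$, and $K_1$, so that the Hecke element $\beta_p$ implements the convolution $\int\eta(z)\pi(u(z))v\,dz$ precisely and so that the $\Phi_p$-cutoff is sharper than the $K_1$-invariance of $v_1$. The uniformity in $\ell$ required by the proposition is automatic, because $\beta_p$ and $\Phi_p$ depend only on $v$, and the reductions $\ell(\pi(t_y)v) = \ell(v)$ and $\ell(\pi(nk)v_1) = \ell(v_1)$ rely only on invariance properties of $v$ and $v_1$ together with the triviality of $\chi$ on $N$.
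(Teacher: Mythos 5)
The proposal is correct and follows essentially the same approach as the paper: both take $v_1$ of the form (\ref{v1Def}) with $\hat\eta$ the characteristic function of (or supported in) $1+p^N\Z_p$, choose $\Phi_p$ so that $x\mapsto\Phi_p(f_2 x)$ cuts out a small $N(\Q_p)$-coset of a compact open neighborhood of the identity fixing $v_1$, and exploit (\ref{FTeqn}), the triviality of $\chi$ on $N$, and the normalization of $N$ by $T'$ to collapse (\ref{badfte2})/(\ref{badfte3}) to a constant multiple of $\ell(v)$ before rescaling $\beta_p$. You make the construction of $\beta_p$ and $\Phi_p$ more explicit than the paper does (the paper simply asserts the existence of a $\beta_p$ with $\beta_p * v = C^{-1}v_1$, invoking irreducibility implicitly), but this is only a spelling out of a standard fact and not a different method.
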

\begin{proof} Denote by $K_N = \{\mathbf{1}_4 + p^NM_4(\mathbf{Z}_p)\} \cap \GSp_4(\mathbf Q_p)$ the congruence subgroup of level $p^N$ of $\GSp_4(\mathbf{Z}_p)$.  The vector $v$ is stabilized by some $K_{N}$, $N \gg 0$.  We may choose $\eta$ in $C_c^\infty(\mathbf{Q}_p)$ so that $\hat{\eta}(y) = \charf( y \in 1 + p^N\mathbf{Z}_p)$.  In fact, $\eta(z) = \psi(-z)|p|^N \charf(p^N z \in \mathbf{Z}_p)$ is the unique such $\eta$.

The vector $v_1$, defined as in (\ref{v1Def}), is stable by some congruence subgroup $K_{N_1}$.  The stabilizer of the action $f_2 \mapsto f_2 g$ for $g$ in $\SL_{2,L}(\Q_p)$ is exactly $N(\Q_p)$, so we may select $\Phi_p$ such that the function $x \mapsto \Phi_p(f_2 x)$ is the restriction to $\SL_{2,L}(\Q_p)$ of the characteristic function of $N(\Q_p)K_{N_1}$.  Choosing such a $\Phi_p$ and putting it into (\ref{badfte2}), we obtain
\begin{align*} J_p(\ell,v_1, s) &= \int_{T'(\Q_p)}|\nu(t)|^{2s-2}\int_{N(\Q_p) \backslash \SL_{2,L}(\Q_p)}{\charf(x \in N(\Q_p)K_{N_1}) \ell(\pi(tx) v_1) \,dx}\,dt \\ &= C' \int_{T'(\Q_p)}{|\nu(t)|^{2s-2} \ell(\pi(t) v_1)\,dt} \\ &= C  \int_{\GL_1(\Q_p)}{|y|^{2s-2}\charf(y \in 1 + p^N \Z_p) \ell(\pi(t_y) v)\,dy} \\ &= C \ell(v) \end{align*}
for some positive constants $C'$ and $C$ that do not depend on $\ell$.  Here $t_y = \diag(y,y,1,1)$.  Choosing $\beta_p$ so that $\beta_p * v=  C^{-1} v_1$ gives the proposition. \end{proof}

\begin{proof}[Proof of Proposition \ref{KS:ram}]  Pick $\alpha_p$ so that $\alpha_p(v_Dg) = \charf(g \in {\GL_{2,L}^*} K_N)$ for $N$ sufficiently large.  Then we are reduced to the case in Proposition \ref{KS:ram1}. 
\end{proof}

The following lemma immediately implies Proposition \ref{KS:arch}.
\begin{lemma} Suppose that $v$ is a smooth vector in $V_{\pi_{\infty}}$, $\ell$ is a $(U,\chi)$-functional on the space of such smooth vectors, and $\ell(v) \neq 0$.  Then there exists a smooth vector $v'$ in $V_{\pi_{\infty}}$ and a Schwartz function $\Phi_\infty$ so that the integral $J_\infty(\ell,v',s)$ is not identically zero.\end{lemma}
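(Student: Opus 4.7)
\emph{Proof proposal.} I plan to adapt the finite-place argument of Proposition \ref{KS:ram1} to the Archimedean setting by replacing the congruence-subgroup smoothing with Fourier smoothing, and the characteristic function of a compact-open set with an approximate-identity concentration on the orbit $f_2 \cdot \SL_{2,L}(\R) \subset W_4(\R)$.

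Starting from the partially unfolded form (\ref{badfte2}), write
\[J_\infty(\ell, v', s) = \int_{T'(\R)} |\nu(t)|^{2s-2} \Psi(t) \, dt, \quad \Psi(t) := \int_{N(\R)\backslash \SL_{2,L}(\R)} \Phi_\infty(f_2 x) \ell(\pi(tx) v') \, dx.\]
By injectivity of the Mellin transform on smooth functions of $t$ of moderate growth, $J_\infty(\ell, v', s)$ is not identically zero in $s$ as soon as $\Psi$ is not identically zero on $T'(\R) \cong \R^\times$; I will concretely arrange $\Psi(1) \neq 0$.

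To choose $v'$, fix $\eta \in \mathcal{S}(\R)$ with $\hat\eta(1) \neq 0$ (e.g.\ $\eta(z) = e^{-\pi z^2}$, giving $\hat\eta(y) = e^{-\pi y^2}$), and set $v' = \int_\R \eta(z) \pi(u(z)) v \, dz$. This is a smooth vector, and the computation (\ref{FTeqn}), specialized to $t = 1$, yields $\ell(v') = \hat\eta(1) \ell(v) \neq 0$. Since $v'$ is smooth and $\ell$ is continuous on the Fr\'echet space of smooth vectors (the standing convention in this setting), the function $x \mapsto \ell(\pi(x) v')$ is smooth on $\GSp_4(\R)$; because $\chi|_N = 1$, it descends to a smooth function on $N(\R) \backslash \SL_{2,L}(\R)$ that is nonzero at the identity coset.

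To choose $\Phi_\infty$, observe that the orbit map $x \mapsto f_2 x$ identifies $N(\R) \backslash \SL_{2,L}(\R)$ with a locally closed smooth submanifold of $W_4(\R)$. Via a tubular-neighborhood extension, one may pick a nonnegative Schwartz function $\Phi_\infty \in \mathcal{S}(W_4(\R))$ whose pullback $x \mapsto \Phi_\infty(f_2 x)$ is supported in an arbitrarily small neighborhood of the identity coset and has positive integral. Shrinking this neighborhood so that $\ell(\pi(x) v')$ stays in a half-plane not containing $0$ throughout its support yields $\Psi(1) \neq 0$, completing the argument.

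The main obstacle is this final concentration step: one must exhibit a Schwartz function on the ambient space $W_4(\R)$ whose pullback to the orbit $f_2 \cdot \SL_{2,L}(\R)$ approximates a delta function at the identity coset, which is where the Archimedean geometry (as opposed to a direct formal translation of the $p$-adic proof) enters. This is a routine but nontrivial construction, carried out for instance by multiplying a Gaussian peaked at $f_2$ by a smooth cutoff supported in a suitable tubular neighborhood of the orbit.
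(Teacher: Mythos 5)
Your proposal is essentially the same as the paper's proof — both hinge on (i) the Fourier-smoothed vector $v' = \int \eta(z)\pi(u(z))v\,dz$ together with the identity (\ref{FTeqn}), and (ii) concentrating $\Phi_\infty$ so that its pullback to $N(\R)\backslash\SL_{2,L}(\R)$ is a bump near the identity coset. The one genuine organizational difference is the order of iterated integration. You start from (\ref{badfte2}), take the $x$-integral as the inner one to produce $\Psi(t)$, show $\Psi(1)\neq 0$, and then invoke Mellin injectivity over $T'(\R)$. The paper instead works from (\ref{badfte3}), evaluates the \emph{inner} $t$-integral at $x=1$ — precisely where (\ref{FTeqn}) applies — obtaining $\int |y|^{2s-2}\hat\eta(y)\ell(\pi(t_y)v)\,dy$, and then uses continuity in $x$ plus concentration of $\Phi_\infty$.

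Two small cautions where your version is a bit weaker than the paper's. First, the paper chooses $\eta$ with $\hat\eta$ \emph{compactly supported} near $y=1$, not merely $\hat\eta(1)\neq 0$; this makes the inner $t$-integral at $x=1$ an entire, nonvanishing function of $s$ with no convergence issues, which is cleaner than a Gaussian. Second, and more substantively, your Mellin-injectivity step requires knowing that the Mellin transform of $\Psi$ converges in some right half-plane and is injective there. You assert $\Psi$ has "moderate growth" but do not justify it, and indeed (\ref{FTeqn}) does not control $\ell(\pi(t_y x)v')$ for $x\neq 1$, since $x\in\SL_{2,L}(\R)$ does not normalize $U$, so there is no analogue of $\ell(\pi(t_y x)v')=\hat\eta(y)\ell(\pi(t_y x)v)$ away from $x=1$. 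This is not fatal — the required convergence follows from the (implicitly assumed) absolute convergence of $J_\infty$ for $\mathrm{Re}(s)\gg 0$, via Fubini and $L^1$-Fourier uniqueness on $\R^\times$ — but you should say that explicitly rather than gesture at "moderate growth." The paper avoids the issue entirely by doing the $t$-integral first at the single point $x=1$, where compact support of $\hat\eta$ makes everything elementary, and then only needing continuity in $x$ to run the concentration argument.
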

\begin{proof} Choose a Schwartz function $\eta$ on $\R$ with $\hat{\eta}$ compactly supported near $y = 1$, and define $v_1$ as in (\ref{v1Def}).  Set $t_y = \diag(y,y,1,1)$.  Then
\begin{equation*} \int_{T'(\R)}{|\nu(t)|^{2s-2} \ell(\pi(t) v_1) \,dt} = \int_{\GL_1(\R)}{|y|^{2s-2}\hat{\eta}(y) \ell(\pi(t_y) v)\,dy} \end{equation*}
can be made nonzero for an appropriate $\eta$ since $\ell(v) \neq 0$.  With this choice of $v_1$, the inner integral in (\ref{badfte3}) is nonzero when $x=1$.  As in the proof of Proposition \ref{KS:ram1}, given any smooth function $\Phi'$ on $\SL_{2,L}(\R)$ that is left $N(\R)$-invariant and compactly supported on $N(\R) \backslash \SL_{2,L}(\R)$, we can pick $\Phi_\infty$ such that $\Phi_\infty(f_2 x) =\Phi'(x)$.  For $\mathrm{Re}(s) \gg 0$, the integrand in (\ref{badfte3}) is continuous as a function of $x \in \SL_{2,L}(\R)$.  Hence we can choose $\Phi_\infty$ so that (\ref{badfte3}) is nonzero.  Setting $v' = v_1$ proves the lemma. \end{proof}

\section{Holomorphic Siegel modular forms}\label{hol}
The purpose of this section is to prove Theorem \ref{archimedean}.  We choose the data $\alpha_\infty$ for $P_D^\alpha$ and $\Phi_\infty$ for $E(g,s,f^\Phi)$ so that the Archimedean integral $I_\infty(\phi,s)$ may be computed explicitly when $\phi$ comes from a level one holomorphic Siegel modular form.  We will see that for such a $\phi$ and with our choice of data, the Archimedean integral is equal to $\pi^{-2s}(4\pi)^{-(2s+r-2)}\Gamma(2s)\Gamma(2s+r-2)$ up to a constant. The discriminant $D$ is negative throughout this section, so $L = \mathbf{Q}(\sqrt{D})$ is an imaginary quadratic field.  We use the notation $u(X) = \mm{\mathbf{1}_2}{X}{}{\mathbf{1}_2}$ for a symmetric $2 \times 2$ matrix $X$ in this section.  The paper \cite{kudla} of Kudla was very helpful for us in making the correct choice of $\alpha_\infty$.

\subsection{Preliminaries}
Denote by $\mathcal{H}_2$ the upper half space of symmetric $2\times 2$ complex matrices with positive definite imaginary part.  We write $\GSp_4^+(\mathbf{R})$ for the elements of $\GSp_4(\mathbf{R})$ with positive similitude.  Then $\GSp_4^+(\mathbf{R})$ acts transitively on $\mathcal{H}_2$ via the formula
\[\mb{A}{B}{C}{D} z = (Az+B)(Cz+D)^{-1}.\]
We set $i = \sqrt{-1}$ and, abusing notation, also denote by $i$ the $2 \times 2$ matrix $i \mathbf{1}_2 \in \mathcal{H}_2$.  Denote by $K_\infty$ the subgroup of $\Sp_4(\mathbf{R})$ that stabilizes $i$.  Then
\[K_\infty = \left\lbrace \mb{A}{B}{-B}{A}: A + iB \in U(2)\right\rbrace.\]
For $\gamma \in \GSp_4(\mathbf{R})$, $\gamma = \mm{A}{B}{C}{D}$ and $z \in \mathcal{H}_2$, we write $j(\gamma,z) = \det(Cz+D)$.  In this section, we assume our cusp form $\phi$ satisfies
\begin{enumerate}
\item $\phi(gk_fk_\infty) = j(k_\infty,i)^{-r}\phi(g)$ for all $k_f \in \prod_{p < \infty}{\GSp_4(\mathbf{Z}_p)}$ and $k_\infty \in K_\infty$ and
\item the function $f_\phi: \mathcal{H}_2 \rightarrow \mathbf{C}$ defined by $f_\phi(g_\infty(i)) = \nu(g_\infty)^{-r}j(g_\infty,i)^r\phi(g_\infty)$ for all $g_\infty \in \GSp_4^+(\mathbf{R})$ is a classical holomorphic Siegel modular form.
\end{enumerate}
Assumption (1) implies that the function $f_\phi$ is well-defined.  Assumption (2) ensures that $f_\phi$ is a holomorphic function on $\mathcal{H}_2$.  It is then a consequence of assumption (1) that it is a classical Siegel modular form of level one and weight $r$.  For a general discussion about the relationship between Siegel modular forms and automorphic representations, see \cite[\S 4]{asgariSchmidt}.

Since $f_\phi$ is a level one Siegel modular form, it has a Fourier expansion of the form
\[f_\phi(Z) = \sum_{S > 0}{a(S) e^{2 \pi i \tr(S Z)}}.\]
Here the sum is taken over $2 \times 2$ symmetric matrices $S$ that are half-integral and positive definite, and the Fourier coefficients $a(S)$ are in $\C$.  Set
\begin{equation} \label{eqn:tdef} T = \begin{cases} \mm{-D}{}{}{1} & D \not \equiv 1 \pmod{4} \\ \mm{\frac{1-D}{4}}{\frac{1}{2}}{\frac{1}{2}}{1} & D \equiv 1 \pmod{4}\end{cases}.\end{equation}
The matrix $T$ is associated to $\chi$ in the sense that $\chi(u(X)) = \psi(\tr(TX))$.  It follows from assumptions (1) and (2) that for $g \in \GSp_4^+(\mathbf{R})$,
\begin{equation} \label{eqn:phiinfty}\phi_\chi(g_\infty) = a(T)\nu(g)^r j(g,i)^{-r}e^{2\pi i \tr(T g(i))}.\end{equation}
Indeed, suppose $u(X) = u(X_\infty) u(X_f)$, and $u(X_f) \in \GSp_4(\widehat{\Z})$.  Then applying the two assumptions,
\begin{equation}\label{FC1} \nu(g)^{-r}j(g,i)^r \phi(u(X) g) = \sum_{S > 0}{e^{2\pi i \tr(S X_\infty)} a(S) e^{2 \pi i \tr(S g \cdot i)}}.\end{equation}
The natural inclusion
\[U_P(\Z) \backslash U_P(\R)U_P(\widehat{\Z}) \rightarrow U_P(\Q) \backslash U_P(\A)\]
is a bijection.  Applying (\ref{FC1}), one obtains
\begin{align}\label{FC2} \nu(g)^{-r}j(g,i)^r \phi_\chi(g_\infty) &= \int_{U_P(\Z) \backslash U_P(\R)U_P(\widehat{\Z})}{\psi^{-1}(\tr(T X)) \left( \sum_{S > 0}{e^{2\pi i \tr(S X_\infty)} a(S) e^{2 \pi i \tr(S g \cdot i)}}\right)\,dX} \nonumber \\ &= a(T) e^{2 \pi i \tr(T g\cdot i)}. \end{align}

If $g_\infty \in \GSp_4(\R)$ has $\nu(g_\infty) < 0$, then our assumptions imply $\phi_\chi(g_\infty) = 0$.  To see this identity, first set $g'_\infty = \mm{-1}{}{}{1}_\infty g_\infty$, so that $\nu(g_\infty') > 0$.  Then
\begin{align*} \phi_\chi(g_\infty) &= \int_{U_P(\Q)\backslash U_P(\A)}{\chi^{-1}(u(X))\phi(u(X) g_\infty)\,dX} \\ &= \int_{U_P(\Q)\backslash U_P(\A)}{\chi^{-1}(\mm{-1}{}{}{1}u(X)\mm{-1}{}{}{1})\phi(\mm{-1}{}{}{1}u(X)\mm{-1}{}{}{1} g_\infty)\,dX} \\ & =  \int_{U_P(\Q)\backslash U_P(\A)}{\chi^{-1}(u(-X))\phi(u(X)g_\infty')\,dX} \\ &= \int_{U_P(\Q)\backslash U_P(\A)}{\psi^{-1}(\tr(-TX))\phi(u(X) g_\infty')\,dX}, \end{align*}
where we have used that $\phi$ is invariant on the right by $\GSp_4(\widehat{\Z})$.  The vanishing of the quantity on the last line follows as in the computation (\ref{FC2}) since $f_\phi(Z)$ has no negative definite Fourier coefficients.

We define the $K_\infty$-invariant inner product $\{\cdot,\cdot\}_4$ on $W_4(\mathbf{R})$ to be the unique inner product for which $e_1, e_2, f_1,f_2$ is an orthonormal basis.  Denote by $\{\,,\,\}_5$ the inner product on $\wedge^2 W_4(\R) \supseteq V_5(\R)$ induced by $\{\cdot,\cdot\}_4$.  That is, if $u_1, u_2, v_1, v_2$ are in $W_4(\R)$, then we set
\[\{u_1 \wedge u_2, v_1 \wedge v_2 \}_5 = \{u_1, v_1\}_4\{u_2, v_2\}_4 -\{u_1,v_2\}_4\{u_2,v_1\}_4\]
and extend $\{ \cdot ,\cdot \}_5$ by linearity.  Since $\{ \cdot,\cdot\}_5$ is induced by $\{ \cdot,\cdot\}_4$, $\{ \cdot,\cdot\}_5$ is also $K_\infty$-invariant.  Finally, denote by $\| \cdot \| $ the norms on $W_4(\R)$ and $V_5(\R)$ associated to these inner products.  That is, $\|v\|^2 = \{v,v\}_4$ or $\{v,v\}_5$.  One has
\[\|a_1e_1 + a_2e_2 + b_1f_1 + b_2f_2\|^2 = a_1^2 + a_2^2 + b_1^2 + b_2^2\]
and
\[ \| A e_1 \wedge e_2 + B_1e_1 \wedge f_2 + B_2(e_1 \wedge f_1 - e_2 \wedge f_2) + B_3 f_1 \wedge e_2 + C f_1 \wedge f_2\|^2 = A^2+B_1^2+2B_2^2+B_3^2 +C^2.\]

\subsection{Definition of and results on $P_D^\alpha$}\label{P_D hol}
We now define $\alpha_\infty$.  Let $r \ge 6$ be an integer.  Define a vector 
\begin{equation} w = -(e_1 - if_1) \wedge (e_2 -if_2) \in V_5(\mathbf{C}).\end{equation}
Then for $g \in \GSp_4(\mathbf{R})$ we define
\begin{equation} \label{alphainftydef} \alpha_\infty(g) =\overline{ (w,v_Dg)}^{-r}.\end{equation}
Recall that $v_D$ is defined by (\ref{vdnot1mod4}) or (\ref{vd1mod4}) depending on $D \pmod{4}$ and that $( \cdot, \cdot)$ is the $\GSp_4$-invariant linear form on $V_5$ defined in Section \ref{subsec:groupdefs}.  We will see shortly that the hypothesis $q(v) = \frac{1}{2}(v,v) < 0$ implies $(w,v) \neq 0$, and hence $\alpha_\infty$ is well-defined.

Recall that $\alpha = \prod_v{\alpha_v}$ is assumed to be factorizable.  Define $\alpha_f = \prod_{v < \infty}{\alpha_v}$ to be the finite part so that $\alpha = \alpha_f \otimes \alpha_\infty$.  With this definition of $\alpha_\infty$, one obtains
\[P_D^\alpha(g) = \sum_{\gamma \in \GL_{2,L}^*(\mathbf{Q}) \backslash \GSp_4(\mathbf{Q})}{\alpha_f(v_D \gamma g_f)\overline{(w,v_D\gamma g_\infty)}^{-r}}.\]
Note that $\alpha_\infty$ is not a Schwartz function on $V_5(\mathbf{R})$, but we will shortly see that the condition $r \geq 6$ implies that $P_D^\alpha$ is absolutely convergent and defines a function of moderate growth. The following lemma summarizes the properties of $w$ that we need.
\begin{lemma}\label{w properties} The element $w = -(e_1 - if_1) \wedge (e_2 -if_2) \in V_5(\mathbf{C})$ has the following properties.
\begin{enumerate}
\item $w k_\infty = j(k_\infty,i)^{-1}w$ for all $k_\infty \in K_\infty$.
\item For $v \in V_5(\mathbf{R})$, $|(w,v)|^2 = \|v\|^2 - (v,v)$, and hence $(w,v)$ is not zero when $q(v) < 0$.
\item Suppose that $g \in \GSp_4^+(\mathbf{R})$ and $v \in V_5(\mathbf{R})$.  Then $j(g,i)^{-1}\nu(g)(w,vg)$, as a function of $g$, is right-invariant under $Z(\mathbf{R})K_\infty$, and hence descends to a function of $\mathcal{H}_2$.  If
\begin{equation} \label{eqn:vdef} v = A e_1 \wedge e_2 + B_1 e_1 \wedge f_2 + B_2(e_1 \wedge f_1 - e_2 \wedge f_2) + B_3 f_1 \wedge e_2 + C f_1 \wedge f_2,\end{equation}
and $Z = g(i)$, then
\begin{equation}\label{eqn:gvpairing} j(g,i)^{-1}\nu(g)(w,vg) = -A\det(Z)+\tr\left(\mb{-B_1}{B_2}{B_2}{-B_3}Z\right) - C.\end{equation}
\end{enumerate}
\end{lemma}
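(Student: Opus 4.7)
For (1), I will write $k_\infty = \mb{A}{B}{-B}{A}$ and set $U := A + iB \in U(2)$. A direct computation of the right action of $k_\infty$ on $e_1 - if_1$ and $e_2 - if_2$ shows that these two vectors transform into themselves via the matrix $U$, so their wedge product transforms by $\det(U)$. Since $\nu(k_\infty) = 1$, the induced action on $V_5 = \wedge^2_0(W_4) \otimes \nu^{-1}$ yields $w k_\infty = \det(U)\, w$. Finally, $j(k_\infty, i) = \det(A - iB) = \overline{\det U}$, which equals $\det(U)^{-1}$ since $|\det U| = 1$, giving the claim.

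For (2), I will expand $w = -e_1 \wedge e_2 + i(e_1 \wedge f_2 + f_1 \wedge e_2) + f_1 \wedge f_2$ and compute $(w, v)$ directly from $v \wedge w = (v,w)\, e_1 \wedge e_2 \wedge f_1 \wedge f_2$. All cross-terms involving the $B_2(e_1 \wedge f_1 - e_2 \wedge f_2)$ piece of $v$ vanish by repeated basis factors, yielding $(w, v) = A - i(B_1 + B_3) - C$, so that $|(w,v)|^2 = (A-C)^2 + (B_1 + B_3)^2$. Separately, the stated inner product gives $\|v\|^2 = A^2 + B_1^2 + 2 B_2^2 + B_3^2 + C^2$, and a quick computation of $v \wedge v$ gives $(v, v) = 2(AC - B_1 B_3 + B_2^2)$; the difference simplifies to $(A-C)^2 + (B_1 + B_3)^2$, matching $|(w,v)|^2$. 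Non-vanishing when $q(v) < 0$ is then immediate, since $\|v\|^2 > 0$ for $v \neq 0$ forces $\|v\|^2 - (v,v) > 0$.

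For (3), the right $K_\infty$-invariance will follow from (1), the $\GSp_4$-invariance of $(\cdot,\cdot)$, and the cocycle property $j(gk, i) = j(g, i) j(k, i)$; right $Z(\R)$-invariance is immediate because the $\nu^{-1}$ twist in the definition of $V_5$ makes the central action trivial while the factors $\nu(g)$ and $j(g,i)^{-1}$ transform compensatorily on scalars. To verify the explicit formula, I plan to introduce the auxiliary vector $w_Z := -\zeta_1(Z) \wedge \zeta_2(Z) \in V_5(\C)$, where $\zeta_j(Z) := e_j - \sum_k Z_{jk} f_k$ spans the Lagrangian subspace $L_Z \subset W_4(\C)$ associated to $Z \in \mathcal{H}_2$. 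A direct coefficient check shows that the linear functional $v \mapsto (w_Z, v)$ agrees with the right-hand side of (\ref{eqn:gvpairing}); at $Z = i\mathbf{1}_2$ one has $w_Z = w$, matching the case $g = 1$ by (2).

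The hard part will be extending this from $g = 1$ to arbitrary $g$. My plan is to verify that both sides of (\ref{eqn:gvpairing}), viewed as linear-functional-valued functions on $\GSp_4^+(\R)/Z(\R)K_\infty = \mathcal{H}_2$, transform under the left action of $\GSp_4^+(\R)$ with the same automorphy factor, so that matching at $Z = i$ forces matching throughout $\mathcal{H}_2$. On the right-hand side this reduces to showing that $\zeta_1 \wedge \zeta_2$ acquires precisely the $\det(CZ + D)$-type factor needed to cancel the $j(g,i)^{-1}\nu(g)$ prefactor, under the natural equivariance of the Lagrangian $L_Z$. A fallback is to compute $(w, vg)$ directly at representatives $g = u(X) m(Y^{1/2})$ for each $Z = X + iY$ with $Y > 0$; this requires bookkeeping with the 5-dimensional representation but is otherwise routine.
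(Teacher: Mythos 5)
Your proposal is correct on all three items.

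For item (1), your argument matches the standard (and presumably the paper's implicit) calculation: the span of $e_1 - if_1, e_2 - if_2$ is preserved by $k_\infty$ with transition matrix $U = A+iB$, so $w$ picks up $\det(U)$, while $j(k_\infty, i) = \det(A - iB) = \overline{\det U} = \det(U)^{-1}$.

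For item (2), your computation $(w,v) = (A-C) - i(B_1+B_3)$ is correct and, notably, is what the stated identity $|(w,v)|^2 = \|v\|^2 - (v,v)$ requires: one has $\|v\|^2 - (v,v) = (A-C)^2 + (B_1+B_3)^2$. The paper's proof instead records $(w,v) = (C-A) + i(B_1-B_3)$; this appears to be a sign typo, since it gives $(B_1-B_3)^2$ rather than $(B_1+B_3)^2$. Your version is the one that makes the claimed equality hold. (Your phrase ``vanish by repeated basis factors'' is a little loose --- the real reason the $B_2$ term drops out is that $w$ has no $e_1\wedge f_1$ or $e_2\wedge f_2$ component, so nothing complementary exists to pair against --- but your conclusion is right.)

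For item (3), you take a genuinely different route from the paper. The paper directly computes $\nu(g)j(g,i)^{-1} w g^{-1}$ by writing $g = nm$ in the Siegel parabolic and grinding out the coefficients (using isotropy of $w$ at the end to nail down the $f_1\wedge f_2$ coefficient). You instead define $w_Z = -\zeta_1(Z)\wedge\zeta_2(Z)$, check that $(w_Z, \cdot)$ agrees with the right-hand side of the claimed formula by a coefficient comparison, note that $w_i = w$, and then plan to transport across $\mathcal{H}_2$ by an automorphy argument. Expanding $\zeta_1\wedge\zeta_2$ confirms that $w_Z$ has coefficients $(A',B_1',B_2',B_3',C') = (-1, z_{22}, z_{12}, z_{11}, -\det Z)$ in the basis of (\ref{eqn:vdef}), and polarizing $(v,v)$ does yield the stated pairing, so this part is fine. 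The one place where your proposal is only an outline is the automorphy step: you assert that $\zeta_1\wedge\zeta_2$ transforms under $\gamma$ with a factor of $j(\gamma, Z) = \det(D^T + ZC^T)$ (after accounting for the $\nu^{-1}$ twist), which is true but needs to be actually verified. Your fallback --- a direct computation at representatives $u(X)m(Y^{1/2})$ --- is essentially the paper's argument. One advantage of your $w_Z$ approach is that it bypasses the final isotropy trick the paper uses to pin down the $-\det(Z)$ coefficient, since you read it off directly from the Lagrangian wedge.
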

\begin{proof} The first item is a simple calculation.  For the second, assume $v$ is written in the form (\ref{eqn:vdef}).  Then one computes
\[(w,v) = (C-A) + i(B_1-B_3) \textrm{ and } (v,v) = 2B_2^2 - 2B_1B_3 + 2AC,\]
from which the desired equality follows.

For the third item, we observe using the first part that $\nu(g)j(g,i)^{-1}wg^{-1}$ is a right $Z(\mathbf{R})K_\infty$-invariant function and thus descends to $\mathcal{H}_2$.  Hence to compute it, we may assume $g$ is in the Siegel parabolic, i.e.\ has the lower left $2\times 2$ block equal to zero.  We will compare the expressions $g(i) = Z$ and $\nu(g)j(g,i)^{-1}wg^{-1}$ to obtain the formula
\begin{align}
	\label{eqn:rstarginv} &\nu(g)j(g,i)^{-1}wg^{-1} =\\
	\nonumber &- \det(Z) f_1 \wedge f_2 + z_{11} f_1 \wedge e_2 + z_{12}(e_1\wedge f_1 - e_2 \wedge f_2) + z_{22} e_1 \wedge f_2 -e_1 \wedge e_2 .
\end{align}
The third item follows immediately from this equality.  To prove (\ref{eqn:rstarginv}), we first write a general element of the Siegel parabolic in the form $g=nm$, where these matrices are defined by
\[g = \underbrace{\mb{\mathbf{1}_2}{X}{}{\mathbf{1}_2}}_{n}\cdot \underbrace{\mb{\lambda \mathbf{1}_2}{}{}{\mathbf{1}_2}\cdot \left(\begin{array}{cccc} a & b & & \\ c & d & &\\ & & d & -c \\ & & -b & a\end{array}\right)}_{m}\]
for $X = \mm{x_{11}}{x_{12}}{x_{12}}{x_{22}}$.  Let $\delta = \det\mm{a}{b}{c}{d}$.  Then $g(i) = X+iY = Z$, where
\[Y = \frac{\lambda}{\delta}\mb{a^2+b^2}{ac+bd}{ac+bd}{c^2+d^2}.\]
We also have $\nu(g)=\nu(m)=\lambda\delta$ and $j(g,i)=\det\mm{d}{-c}{-b}{a}=\delta$, so $j(g,i)^{-1}\nu(g)=\lambda$.

We need to calculate $\nu(g)j(g,i)^{-1}wg^{-1}=\lambda wm^{-1}n^{-1}$ in terms of $a,b,c,d,X,\delta,$ and $\lambda$.  Recalling from Section \ref{P_D} that $V_5 = \wedge_0^2 W_4 \otimes \nu^{-1}$, we calculate the action of $m^{-1}$ on $w \in V_5$ as
\[wm^{-1} = -((e_1 -if_1)\wedge (e_2-if_2)m^{-1})\cdot\nu(m^{-1})^{-1},\]
where $m^{-1}$ acts on $e_i$ and $f_i$ by right multiplication.  In the following, we will substantially simplify the calculation by writing $W$ for the coefficient of $f_1\wedge f_2$ since it can be identified using the other coefficients at the end.  Expanding the right hand side, we have
\begin{align*}
	wm^{-1} =& -((\lambda\delta)^{-1}(de_1-be_2)-i\delta^{-1}(af_1+cf_2)) \wedge ((\lambda\delta)^{-1}(-ce_1+ae_2)-i\delta^{-1}(bf_1+df_2))\nu(m)\\
	=&-(\delta\lambda)^{-1}((de_1-be_2) \wedge (-ce_1+ae_2) -i\lambda(af_1+cf_2)\wedge(-ce_1+ae_2)\\
	&-i\lambda(de_1-be_2)\wedge(bf_1+df_2)+Wf_1\wedge f_2)\\
	=&-(\delta\lambda)^{-1}(\delta(e_1\wedge e_2)-i\lambda(c^2+d^2)e_1\wedge f_2-i\lambda(ac+bd)(e_1 \wedge f_1 -e_2 \wedge f_2)\\
	&+i\lambda(a^2+b^2)(e_2\wedge f_1)+Wf_1\wedge f_2)\\
	=&-(\delta\lambda)^{-1}(\delta(e_1\wedge e_2)-i\delta y_{22}e_1\wedge f_2-i\delta y_{12}(e_1 \wedge f_1 -e_2 \wedge f_2)+i\delta y_{11}(e_2\wedge f_1)\\
	&+Wf_1\wedge f_2),
\end{align*}
where in the third equality we cancel many terms with opposite signs.  We now calculate
\begin{align*}
	\lambda wm^{-1}n^{-1} =&-(e_1\wedge e_2-iy_{22}e_1\wedge f_2-iy_{12}(e_1 \wedge f_1 -e_2 \wedge f_2)+iy_{11}e_2\wedge f_1+Wf_1\wedge f_2)\cdot n^{-1}\\
	=&-(e_1-x_{11}f_1-x_{12}f_2)\wedge(e_2-x_{12}f_1-x_{22}f_2)+iy_{22}(e_1-x_{11}f_1-x_{12}f_2)\wedge f_2\\
	&+iy_{12}((e_1-x_{11}f_1-x_{12}f_2)\wedge f_1 -(e_2-x_{12}f_1-x_{22}f_2) \wedge f_2)\\
	&+iy_{11}f_1\wedge (e_2-x_{12}f_1-x_{22}f_2)+Wf_1\wedge f_2\\
	=&Wf_1 \wedge f_2 + z_{11} f_1 \wedge e_2 + z_{12}(e_1\wedge f_1 - e_2 \wedge f_2) + z_{22} e_1 \wedge f_2 -e_1 \wedge e_2,
\end{align*}
where the last step involves many cancellations.

To determine $W$, we observe that $(w,w)=0$ and thus $\nu(g)j(g,i)^{-1}wg^{-1}$ is also isotropic for the pairing $(\cdot,\cdot)$.  Using the coefficients of the other terms in the expression for $\nu(g)j(g,i)^{-1}wg^{-1}$, we obtain $-2W+2z_{12}^2-2z_{11}z_{22}=0$, or $W = -\det Z$.  The third item now follows.
\end{proof}

Using Lemma \ref{w properties}.(2), we can check that the sum defining $P_D^\alpha$ converges absolutely and defines a function of moderate growth.  First note that there is a Schwartz-Bruhat function $\alpha'_f$ on $V_5(\A_f)$ that satisfies
\begin{itemize}
	\item $|\alpha_f(v)| \leq \alpha_f'(v)$ and
	\item $\alpha_f'$ is a constant times the characteristic function of an open compact subset of $V_5(\A_f)$.
\end{itemize}
Suppose that $g = g_f g_\infty$.  Consider the set $\Lambda'(g_f)$ consisting of those elements $v$ of $V_5(\Q)$ that satisfy $\alpha_f'(v g_f) \neq 0$.  Then $\Lambda'(g_f)$ is a lattice in $V_5(\R)$.  Hence
\begin{equation}\label{PD bound} |P_D^\alpha(g)| \leq \sum_{v \in \Lambda'(g_f)}{\frac{1}{\left(||vg_\infty||^2 + |D|\right)^{\frac{r}{2}}}}. \end{equation}
The right-hand side of (\ref{PD bound}) converges for $r \geq 6$, and this expression also shows that $P_D^\alpha(g_f g_\infty)$ is a function of moderate growth.

Suppose $\alpha_f = \prod_{v < \infty}{\alpha_v}$ is the characteristic function of $V_5(\widehat{\Z})$, and $\overline{P_D(Z)}$ denotes the associated Siegel modular form.  (We drop the $\alpha$ from the notation to emphasize that we have made a particular special choice of $\alpha$.)  Then in classical notation we have
\[\overline{P_D(Z)} = \sum_{\substack{v \in V_5(\mathbf{Z})\\q(v) = -|D|}} \frac{1}{Q_v(Z)^r}\]
with $Q_v$ defined as follows.  When $v$ is written in the form given in (\ref{eqn:vdef}) above (in terms of $A,B_1,B_2,B_3$, and $C$), then we set
\[Q_v(Z) = -A\det(Z)+\tr\left(\mb{-B_1}{B_2}{B_2}{-B_3}Z\right) - C,\]
which is the right-hand side of (\ref{eqn:gvpairing}).  The function $\overline{P_D(Z)}$ is the analogue on $\GSp_4$ of the modular and Hilbert modular forms defined by Zagier in \cite{zagier}.

The following lemma is the key to the calculation of $I_\infty(\phi,s)$ below.  It is the Archimedean analogue of Proposition \ref{KS:alphachi}.
\begin{lemma} \label{lem:alphainfcalc}  Assume that $g \in \GSp_4^+(\mathbf{R}) \cap M(\mathbf{R})$ and set $Y = g(i)= \mathrm{Im}(g(i))$.  Recall that $\alpha_\infty$ is defined in (\ref{alphainftydef}) and the function $\alpha_{\chi,\infty}$ is defined in terms of $\alpha_\infty$ in (\ref{eqn:alphachivdef}).  Also recall the $2 \times 2$ symmetric matrix $T$ defined in (\ref{eqn:tdef}).  We have
\begin{equation} \overline{\alpha_{\chi,\infty}(g)} = \nu(g)^rj(g,i)^{-r}\frac{(-2\pi i)^r}{(r-1)!}e^{-2\pi \tr(TY)}.\label{eqn:alphainfty}\end{equation}
\end{lemma}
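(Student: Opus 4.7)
The plan is to reduce the computation of $\alpha_{\chi,\infty}(g)$ to a single one-variable complex integral, evaluate that integral by contour integration, and then conjugate. The starting point is the identity
\[\alpha_{\chi,\infty}(g) = \int_{\R} \psi_\infty(z)\,\alpha_\infty\bigl((v_D + z\, f_2 \wedge f_1)g\bigr)\,dz,\]
which follows from (\ref{vDz}) with $\alpha'(v) = \alpha_\infty(vg)$. Substituting the definition $\alpha_\infty(v) = \overline{(w,v)}^{-r}$ turns this into
\[\alpha_{\chi,\infty}(g) = \int_{\R} e^{2\pi i z}\,\overline{(w,(v_D+zf_2\wedge f_1)g)}^{-r}\,dz.\]

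The next step is to make the pairing explicit using Lemma \ref{w properties}.(3). Since $g$ lies in the Levi $M(\R) \cap \GSp_4^+(\R)$, both off-diagonal blocks of $g$ vanish, so $Z = g(i) = iY$ is purely imaginary and $j(g,i) \in \R$. A short bookkeeping in the integral basis $\{e_1\wedge e_2, e_1\wedge f_2, e_1\wedge f_1 - e_2\wedge f_2, f_1 \wedge e_2, f_1 \wedge f_2\}$ shows that in both residue classes mod $4$, the coefficients $(A, B_1, B_2, B_3, C)$ of $v_D$ produce
\[\mb{-B_1}{B_2}{B_2}{-B_3} = T,\]
where $T$ is the matrix in (\ref{eqn:tdef}). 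Adding the contribution from $z f_2 \wedge f_1 = -z f_1 \wedge f_2$, the formula of Lemma \ref{w properties}.(3) gives
\[\nu(g) j(g,i)^{-1}\bigl(w,(v_D + z f_2 \wedge f_1)g\bigr) = \tr(T\cdot iY) + z = z + i\tr(TY).\]
Therefore, setting $a = \tr(TY) > 0$ (positive since $T$ and $Y$ are positive definite),
\[\overline{(w,(v_D+zf_2\wedge f_1)g)}^{-r} = \nu(g)^r j(g,i)^{-r}\,(z - i a)^{-r}.\]

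Inserting this back into the integral yields
\[\alpha_{\chi,\infty}(g) = \nu(g)^r j(g,i)^{-r}\int_{\R}\frac{e^{2\pi i z}}{(z-ia)^r}\,dz.\]
The integrand decays rapidly as $|\mathrm{Re}(z)| \to \infty$ in the closed upper half plane, where it has a single pole at $z = ia$ of order $r$. Closing the contour in the upper half plane and applying the residue theorem gives
\[\int_{\R}\frac{e^{2\pi i z}}{(z-ia)^r}\,dz = 2\pi i \cdot \frac{1}{(r-1)!}\left.\frac{d^{r-1}}{dz^{r-1}}e^{2\pi i z}\right|_{z=ia} = \frac{(2\pi i)^r}{(r-1)!}\,e^{-2\pi a}.\]
Finally, taking complex conjugates and using that $\nu(g)$ and $j(g,i)$ are real on the Levi, we obtain
\[\overline{\alpha_{\chi,\infty}(g)} = \nu(g)^r j(g,i)^{-r}\frac{(-2\pi i)^r}{(r-1)!}e^{-2\pi \tr(TY)},\]
as desired. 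The only delicate point is the bookkeeping that identifies the symmetric $2 \times 2$ block built from the coefficients of $v_D$ with the matrix $T$ in both cases mod $4$; once that is done, the rest is an essentially one-variable residue computation.
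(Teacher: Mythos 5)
Your proof is correct and follows essentially the same route as the paper: reduce via Lemma \ref{w properties}.(3) to a one-variable integral of the form $\int_\R e^{\pm 2\pi i z}(z\mp ia)^{-r}\,dz$ with $a=\tr(TY)$, and evaluate by closing the contour around the single pole. The only cosmetic difference is that you conjugate at the end, whereas the paper works directly with $\overline{\alpha_{\chi,\infty}}$ from the first line; the identification of the quadratic block with $T$, the reduction to (\ref{vDz}), and the residue computation are the same.
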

\begin{proof} By definition,
\[\overline{\alpha_{\chi_\infty}(g)} = \int_{N(\mathbf{R}) \backslash U(\mathbf{R})}{e^{-2\pi i \tr(TX)}(w,v_Du(X)g)^{-r}\,dX}.\]
We have
\begin{align*}
  &\nu(g)^{-r}j(g,i)^{r}\int_{N(\mathbf{R}) \backslash U(\mathbf{R})}{e^{-2\pi i \tr(TX)}(w,v_Du(X)g)^{-r}\,dX}\\
  = &\int_{N(\mathbf{R}) \backslash U(\mathbf{R})}{e^{-2\pi i \tr(TX)}(w,v_Du(X)g)^{-r}\nu(u(X)g)^{-r}j(u(X)g,i)^{r}\,dX}
\end{align*}
since $\nu(u(X)) = 1$ and $j(u(X)g,i) = j(g,i)$.  By the third part of Lemma \ref{w properties}, 
\[(w,v_Du(X)g)^{-r}\nu(u(X)g)^{-r}j(u(X)g,i)^{r} = \tr(T(u(X)g)(i))^{-r}.\]
We define $z = \tr(T(u(X)g)(i))$ and define $x$ and $y$ by $z = x +iy$, noting that $y > 0$.  Then we have
\begin{align*} & \frac{j(g,i)^{r}}{\nu(g)^r}\int_{N(\mathbf{R}) \backslash U(\mathbf{R})}{e^{-2\pi i \tr(TX)}(w,v_Du(X)g)^{-r}\,dX} = \int_{\mathbf{R}}{e^{-2\pi i x}\frac{1}{(x+iy)^r}\,dx} \\
= & e^{-2\pi \tr(TY)}\int_{\mathrm{Im}(z) = y}{e^{-2\pi i z}z^{-r}\,dz} 
= e^{-2\pi \tr(TY)}\frac{(-2\pi i)^r}{(r-1)!}.\end{align*}
The last equality is standard and comes from computing the residue of the integrand around $0$.  The lemma follows.
\end{proof}

\subsection{The Eisenstein series and the calculation of $I_\infty$}
For $w \in W_4(\mathbf{R})$, we define $\Phi_\infty(w) = e^{-\pi ||w||^2}$.  Then for $g \in \GSp_4(\mathbf{R})$,
\[f^\Phi_\infty(g,s) = |\nu(g)|^{2s}\int_{\GL_1(\mathbf{R})}{\Phi_\infty(tf_2g)|t|^{4s}\,dt}.\]
Then $f_\infty^\Phi$ is right invariant under $Z(\mathbf{R})K_\infty$, hence descends to a function on $\mathcal{H}_2$ for $g$ in $\GSp_4^+(\mathbf{R})$.  In fact, if $g \in \GSp_4^+(\mathbf{R})$ and $g(i) = X + iY$, then
\begin{equation} \label{eqn:finfty} f^\Phi_\infty(g,s) = \pi^{-2s}\Gamma(2s)|y_{11}|^{-2s}|\det(Y)|^{2s},\end{equation}
where $y_{ij}$ denotes the entry of $Y$ in position $(i,j)$.

We now have all the ingredients necessary to compute the Archimedean integral
\[ I_\infty(\phi,s) = \int_{Z(\mathbf{R})N(\mathbf{R}) \backslash \GSp_4(\mathbf{R})}{f_\infty^\Phi(g,s)\alpha_\infty(g)\phi_\chi(g)\,dg}.\]
We will compute it up to a nonzero constant.  Integrating over $N(\mathbf{R}) \backslash U(\mathbf{R})$ this becomes 
\[ \int_{Z(\mathbf{R})\backslash M(\mathbf{R})}{\delta_P^{-1}(m) f_\infty^\Phi(m,s)\alpha_{\chi,\infty}(m)\phi_\chi(m)\,dm}.\]
As all the terms in the integrand are right invariant under $K_\infty$, and since $\phi_\chi(m) = 0$ if $\nu(m) < 0$, we may rewrite this as an integral over the purely imaginary part of the upper half space.  We first use the formulas (\ref{eqn:phiinfty}), (\ref{eqn:finfty}), and (\ref{eqn:alphainfty}) to obtain
\[ a(T)\frac{(2\pi i)^r}{(r-1)!}\pi^{-2s}\Gamma(2s)\int_{Z(\mathbf{R})\backslash M^+(\mathbf{R})}{\delta_P^{-1}(m) |\det(Y)/y_{11}|^{2s} \left|\frac{\nu(m)}{j(m,i)}\right|^{2r}e^{-4\pi \tr(TY)}\,dm}.\]
Set $dY = dy_{11}dy_{12}dy_{22}$.  Then we can replace $\delta_P(m)dm$ with $dY$ and use the relations $\delta_P(m) = |\det(Y)|^{3/2}$ and $|\nu(m)|^{2r}|j(m,i)|^{-2r} = |\det(Y)|^r$ to transform $I_\infty(\phi,s)$ into, up to a nonzero constant,
\[a(T)\pi^{-2s}\Gamma(2s)\int_{Y}{|\det(Y)/y_{11}|^{2s+r-3}y_{11}^{r-3}e^{-4\pi \tr(TY)}dY}\]
where is the integration is over the set of two-by-two real symmetric positive-definite matrices.  Following Kohnen-Skoruppa \cite[pg. 548]{ks}, we make the variable change $t = \det(Y)/y_{11}$ $= y_{22}-y_{12}^2/y_{11}$.  We obtain
\[a(T)\pi^{-2s}\Gamma(2s)\int_{t > 0, y_{11} > 0, y_{12} \in \mathbf{R}}{t^{2s+r-2}e^{-4\pi(t+y_{12}^2/y_{11}+|D|y_{11})}\,dy_{11}dy_{12}\frac{dt}{t}}\]
or
\[a(T)\pi^{-2s}\Gamma(2s)\int_{t > 0, y_{11} > 0, y_{12} \in \mathbf{R}}{t^{2s+r-2}e^{-4\pi(t+y_{12}^2/y_{11}+y_{12}+|D|y_{11})}\,dy_{11}dy_{12}\frac{dt}{t}}\]
if $D \not \equiv 1 \pmod{4}$ or $D \equiv 1 \pmod{4}$, respectively.  Up to a nonzero constant (depending on $|D|$), this is
\[a(T)\pi^{-2s}(4\pi)^{-(2s+r-2)}\Gamma(2s)\Gamma(2s+r-2).\]
This completes the proof of Theorem \ref{archimedean}.
\appendix
\section{The local Spin $L$-function}\label{localSpin}
In this appendix we recall the definition of the local Spin $L$-function and prove Proposition \ref{omega_p}.  Since we work primarily with a Borel subgroup of $\GSp_4$, we reorder the symplectic basis so that we may choose the Borel to be upper-triangular.  Namely, we use the ordered basis $(e_1, e_2, f_2, f_1)$, where as usual the pairing is defined by $\langle e_i,f_j \rangle = \delta_{ij}$.  The subgroup $B$ of upper-triangular matrices in $\GSp_4$ is a Borel.  Denote by $T$ and $N$ the diagonal torus and unipotent radical of $B$, respectively.

Suppose that $\alpha$ is an unramified character of $T$.  Also denote by $\alpha$ the extension of this character to $B$.  Let $\pi$ be the unramified irreducible subquotient of $\ind_{B(\Q_p)}^{\GSp_4(\Q_p)}(\delta_B^{1/2} \alpha)$, where $\delta_B$ denotes the modulus character of $B$.  We define four elements of $T(\mathbf{Q}_p)$ of similitude $p$ by
\[t_A = \diag(p,p,1,1), \; t_B = \diag(p,1,p,1), \; t_C = \diag(1,p,1,p),\textrm{ and} \; t_D = \diag(1,1,p,p).\]
The local Spin $L$-function of $\pi$ is defined by
\begin{equation}\label{SpinL}L(\pi,\Spin,s)=\left(1 - \alpha(t_A)|p|^s\right)^{-1}\left(1 - \alpha(t_B)|p|^s\right)^{-1}\left(1 - \alpha(t_C)|p|^s\right)^{-1}\left(1 - \alpha(t_D)|p|^s\right)^{-1}.\end{equation}
Since the Weyl group of $\GSp_4$ permutes $\set{t_A,t_B,t_C,t_D}$, the right-hand side of (\ref{SpinL}) is Weyl-invariant.  It follows that $L(\pi,\Spin,s)$ depends only on $\pi$ rather than $\alpha$. 

Denote by $\phi_\alpha$ the unique spherical vector in $\ind_{B(\Q_p)}^{\GSp_4(\Q_p)}(\delta_B^{\frac{1}{2}} \alpha)$ that satisfies the normalization $\phi_\alpha(1) =1$.  The Macdonald spherical function $\Omega_\alpha$ is defined by
\begin{equation}\label{omegaDef} \Omega_\alpha(g) = \int_{\GSp_4(\Z_p)}{\phi_\alpha(kg) \, dk}. \end{equation}

\begin{proof}[Proof of Proposition \ref{omega_p}] Recall the function $\Delta^s(g) = |\nu(g)|^s \charf(g)$.  Since $\Delta^s$ is invariant on the left by $\GSp_4(\Z_p)$,
\begin{equation}\label{phiAlpha} \int_{\GSp_4(\Q_p)}{\phi_{\alpha}(g) \Delta^s(g)\, dg} = \int_{\GSp_4(\Q_p)}{\Omega_\alpha(g) \Delta^s(g)\,dg}. \end{equation}
Applying the Iwasawa decomposition to the left-hand side of (\ref{phiAlpha}), one obtains
\begin{equation}\label{Ib0} \int_{\GSp_4(\Q_p)}{\phi_\alpha(g) \Delta^s(g) \, dg} = \int_{T(\Q_p)}{\delta_B^{-\frac{1}{2}}(t) \alpha(t) I_B(t) |\nu(t)|^s \, dt} \end{equation}
where
\begin{equation*} I_B(t) = \int_{N(\Q_p)}{\charf(nt)\,dn}. \end{equation*}
We will compute $I_B(t)$ explicitly.

We first compute a simple function related to $I_B$.  Let $P$, $U$, and $M$ denote the Siegel parabolic, its unipotent radical, and its Levi, respectively. For $m = \mm{m_1}{}{}{m_2} \in M(\mathbf{Q}_p)$, we define
\begin{equation*} I_P(m) = \int_{U(\Q_p)}{\charf(um)\,du}. \end{equation*}
We write $u(X)$ for the matrix $\mm{\mathbf{1}_2}{X}{}{\mathbf{1}_2}$ for $X \in M_2(\mathbf{Q}_p)$.  If $X = \mm{x_{11}}{x_{12}}{x_{21}}{x_{22}}$, the condition $u(X) \in U(\mathbf{Q}_p)$ is equivalent to $x_{11}=x_{22}$.  We define a function $\Delta_1:M_2(\mathbf{Q}_p) \rightarrow \mathbf{R}_{\ge 0}$ by $\Delta_1(\mm{a}{b}{c}{d}) = \sup\{|a|, |b|, |c|, |d|\}$.
\begin{lemma} \label{lemma:ipformula} Suppose $m = \mm{m_1}{}{}{m_2} \in M_4(\Z_p)$.  Then $I_P(m) = |\det(m_2)|^{-1} \Delta_1(m_2)^{-1}$. \end{lemma}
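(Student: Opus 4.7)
The plan is to reduce $m_2$ to upper-triangular form via the Iwasawa decomposition, parametrize $U$ explicitly in coordinates, and then reduce the integral to a $p$-adic lattice volume that can be handled by a short case analysis.

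First, I would observe that both sides of the claimed identity are invariant under right multiplication of $m_2$ by $\GL_2(\Z_p)$: the quantities $|\det(m_2)|$ and $\Delta_1(m_2)$ are visibly preserved, and the condition $Xm_2 \in M_2(\Z_p)$ is preserved because $\GL_2(\Z_p)$ stabilizes $M_2(\Z_p)$. Applying the Iwasawa decomposition $\GL_2(\Q_p) = B(\Q_p)\GL_2(\Z_p)$, one may therefore assume $m_2 = \mm{\alpha}{\beta}{0}{\delta}$ with $\alpha, \beta, \delta \in \Z_p$.

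Next, I would parametrize $u = u(X) \in U$ by $X = \mm{a}{b}{c}{a}$, so that $du = da\, db\, dc$. Writing out
\[Xm_2 = \mm{a\alpha}{a\beta+b\delta}{c\alpha}{c\beta+a\delta},\]
the condition $Xm_2 \in M_2(\Z_p)$ becomes the four integrality conditions $a\alpha, c\alpha, a\beta+b\delta, c\beta+a\delta \in \Z_p$. After the substitution $u = a\alpha$, $v = c\alpha$, the $b$-integration is immediate (for fixed $a, c$ the constraint on $b$ pins it to a coset of $\delta^{-1}\Z_p$, contributing $|\delta|^{-1}$), and one is reduced to showing
\[|\alpha|^{-2}|\delta|^{-1}\cdot \mathrm{vol}\set{(u,v) \in \Z_p^2 : u\delta + v\beta \in \alpha\Z_p} = |\det(m_2)|^{-1}\Delta_1(m_2)^{-1}.\]

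The main (but still routine) step is to verify that the residual lattice volume equals $|\alpha|/\Delta_1(m_2)$. When $|\alpha| \geq \max(|\beta|,|\delta|)$ the constraint is automatic and the volume is $1 = |\alpha|/|\alpha|$. When $\max(|\beta|,|\delta|) > |\alpha|$, I would split into the subcases $|\beta| \geq |\delta|$ and $|\delta| > |\beta|$; in either subcase the congruence $u\delta + v\beta \equiv 0 \pmod{\alpha}$ confines the variable with the larger coefficient to a coset of a sublattice of $\Z_p$ of measure $|\alpha|/\max(|\beta|, |\delta|)$, while the other variable is free. In all cases the volume is $|\alpha|/\Delta_1(m_2)$, and substituting this back gives $I_P(m) = |\alpha|^{-1}|\delta|^{-1}\Delta_1(m_2)^{-1} = |\det(m_2)|^{-1}\Delta_1(m_2)^{-1}$, completing the proof. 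The principal obstacle is this final case analysis, but once expressed in terms of $p$-adic valuations it is a straightforward exercise in lattice counting.
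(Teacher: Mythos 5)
Your proof is correct, but it follows a genuinely different reduction from the one in the paper. The paper establishes that $I_P$ is bi-$\GL_2(\Z_p)$-invariant in $m_2$: right invariance is immediate, and left invariance uses the substitution $X \mapsto {}^a k X k$, where ${}^a k$ is $k$ with its diagonal entries swapped; one must observe that this map preserves both the constraint $x_{11}=x_{22}$ defining $U$ and Haar measure. With full bi-invariance in hand, the Cartan decomposition reduces $m_2$ to $\diag(p^a,p^b)$, where $H(m_2)$ is a product of intervals and the volume is read off instantly. You instead invoke only right invariance, reduce $m_2$ via Iwasawa to upper-triangular form $\mm{\alpha}{\beta}{0}{\delta}$, integrate out $b$ to pick up $|\delta|^{-1}$, change variables in $(a,c)$, and compute the residual lattice volume $\mathrm{vol}\{(u,v)\in\Z_p^2 : u\delta+v\beta\in\alpha\Z_p\}$ by a valuation case analysis, landing on $|\alpha|/\Delta_1(m_2)$ in every case. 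I checked the cases (including $\beta = 0$ and $|\beta| = |\delta|$) and they all go through as you describe. Your route is more elementary in that it sidesteps the ${}^a k$-conjugation trick, which is the one non-obvious step in the paper's argument, at the cost of a longer hands-on coordinate computation; the paper's route is slicker once the conjugation identity is noticed, since the diagonal case makes $H(m_2)$ transparent.
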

\begin{proof} Since $m \in M_4(\Z_p)$, $I_P(m)$ is the measure of the set $H(m_2)$ defined by
\[H(m_2) = \{X \in M_2(\Q_p): u(X) \in U(\mathbf{Q}_p) \text{ and } X m_2 \in M_2(\Z_p)\}. \]
It is clear that $H(m_2 k') = H(m_2)$ if $k' \in \GL_2(\Z_p)$.  If $k \in \GL_2(\Z_p)$ and ${}^ak$ denotes interchanging the top left and bottom right entries, then the map $X \mapsto {}^ak X k$ defines a measure-preserving bijection $H(km_2) \rightarrow H(m_2)$.   Suppose $m_2 = k \mm{p^a}{}{}{p^b} k'$ with $k, k'$ in $\GL_2(\Z_p)$ and $a \geq b \geq 0$ integers.  It follows that $I_P(m_2) = I_P\left(\mm{p^a}{}{}{p^b}\right)$.  The set
\[H\left(\mm{p^a}{}{}{p^b}\right) = \mb{p^{-b}\Z_p}{p^{-b}\Z_p}{p^{-a}\Z_p}{\ast}\]
has measure $p^{a+2b}$, so we have $I_P(m) = p^{a+2b} = |\det(m)|^{-1} p^b$.  Since $\Delta_1(m)$ is a bi-$\GL_2(\Z_p)$-invariant function on $M_2(\Q_p)$, $\Delta_1(m_2) = p^{-b}$.  The lemma follows. \end{proof}

The integrals $I_B$ and $I_P$ are related by
\begin{equation}\label{I_B I_P} I_B(t) = \int_{U(\Q_p) \backslash N(\Q_p)}{I_P(nt)\, dn}. \end{equation}
Suppose $t = \diag(\lambda t_1, \lambda t_2, t_1, t_2) \in M_4(\Z_p) \cap T(\Q_p)$.  Identifying $U(\Q_p) \backslash N(\Q_p)$ with $\mathbf{G}_a(\Q_p)$ in the right-hand-side of (\ref{I_B I_P}) and applying Lemma \ref{lemma:ipformula}, we obtain
\begin{align}\label{Ib2} I_B(t) &= \int_{\mathbf{G}_a(\Q_p)}{|t_1 t_2|^{-1} \sup\{|t_1|, |t_2|, |xt_2|\}^{-1} \charf( |x\lambda t_2| \leq 1) \charf(|x t_2| \leq 1)\, dx} \nonumber \\ & = |t_1 t_2^2|^{-1} \int_{\mathbf{G}_a(\Q_p)}{\sup\{|t_1|, |t_2|, |y|\}^{-1} \charf( |\lambda y| \leq 1) \charf(|y| \leq 1)\, dy} \nonumber \\ &= \delta_B(t)^{1/2}|\nu(t)|^{-3/2} \int_{\mathbf{G}_a(\Q_p)}{\sup\{|t_1|, |t_2|, |y|\}^{-1} \charf( |\lambda y| \leq 1) \charf(|y| \leq 1)\, dy}. \end{align}
For the second equality, we use the change of variables $xt_2 = y$.  We have used $\delta_B(t) = |\lambda|^3|t_1/t_2|$ and $\nu(t) = \lambda t_1 t_2$ for the third equality.

If $\mu \in \Q_p^\times$ and $|\mu| \leq 1$, then
\begin{align}\label{Ib3} \int_{\Q_p}{\sup\{|\mu|,|y|\}^{-1} \charf(|y| \leq 1) \,dy}  &= 1 + \ord_p(\mu)(1-|p|) = \ord_p(p \mu) - |p|\ord_p(\mu). \end{align}
For $t \in T$, denote by $\val_p(t)$ the largest integer such that $p^{-\val_p(t)} t \in M_4(\Z_p)$.  To evaluate (\ref{Ib2}), we consider the cases $|\lambda| > 1$ and $|\lambda| \leq 1$.  If $|\lambda| \leq 1$, then by applying (\ref{Ib3}) to (\ref{Ib2}) one obtains
\[I_B(t) = \delta_B(t)^{1/2}|\nu(t)|^{-3/2} \left(\val_p(pt) - |p|\val_p(t)\right).\]
If $|\lambda| > 1$, then
\begin{align}\label{Ib4} I_B(t) &= \delta_B(t)^{1/2}|\nu(t)|^{-3/2} \int_{\mathbf{G}_a(\Q_p)}{\sup\{|t_1|, |t_2|, |y|\}^{-1} \charf( |\lambda y| \leq 1)\, dy} \\ \label{Ib5} &= \delta_B(t)^{1/2}|\nu(t)|^{-3/2} \int_{\mathbf{G}_a(\Q_p)}{\sup\{|\lambda t_1|, |\lambda t_2|, |z|\}^{-1} \charf( |z| \leq 1)\, dz} \\ &= \delta_B(t)^{1/2}|\nu(t)|^{-3/2} \left(\val_p(pt) - |p|\val_p(t)\right) \nonumber \end{align}
using the variable change $z = \lambda y$ to go from (\ref{Ib4}) to (\ref{Ib5}).  Hence
\begin{equation} \label{Ib6} I_B(t) = \delta_B(t)^{1/2}|\nu(t)|^{-3/2} \left(\val_p(pt) - |p|\val_p(t)\right) \end{equation}
in both cases.  

Inserting (\ref{Ib6}) into (\ref{Ib0}), we obtain
\begin{equation}\label{eqn:gsp4int} \int_{\GSp_4(\Q_p)}{\phi_\alpha(g) \Delta^s(g) \, dg} = \int_{T(\Q_p)}{ \alpha(t) \charf(t)\left(\val_p(pt) - |p|\val_p(t)\right) |\nu(t)|^{s-3/2} \, dt}. \end{equation}
Let $\omega_\alpha$ denote the central character of $\alpha$.  Then
\begin{align}\label{eqn:torusint} \int_{T(\Q_p)}{ \alpha(t) \charf(t)\val_p(t)|\nu(t)|^{s-3/2} \, dt} &= \int_{T(\Q_p)}{ \alpha(pt) \charf(pt)\val_p(pt) |\nu(pt)|^{s-3/2} \, dt} \nonumber \\ &= \omega_\alpha(p)|p|^{2s-3} \int_{T(\Q_p)}{ \alpha(t) \charf(pt)\val_p(pt) |\nu(t)|^{s-3/2} \, dt} \nonumber \\ &=  \omega_\alpha(p)|p|^{2s-3} \int_{T(\Q_p)}{ \alpha(t) \charf(t)\val_p(pt)|\nu(t)|^{s-3/2} \, dt}. \end{align}
In the last line we have used $\charf(pt)\val_p(pt) = \charf(t)\val_p(pt)$, which holds because $\val_p(pt) = 0$ when $pt \in M_4(\Z_p)$ and $t \notin M_4(\Z_p)$.  Inserting (\ref{eqn:torusint}) into (\ref{eqn:gsp4int}), we obtain
\begin{equation*}\int_{\GSp_4(\Q_p)}{\phi_\alpha(g) \Delta^s(g) \, dg} = \left(1-\omega_\alpha(p)|p|^{2s-2}\right)\int_{T(\Q_p)}{ \alpha(t) \charf(t)\val_p(pt) |\nu(t)|^{s-3/2} \, dt}. \end{equation*}

\begin{lemma}\label{lemma2} Suppose $t \in T(\Q_p) \cap M_4(\Z_p)$.  If $\val_p(t) = k$, then up to the action of $T(\Z_p)$, $t$ can be written as a product of $t_A, t_B, t_C$ and $t_D$ in $k+1$ ways. \end{lemma}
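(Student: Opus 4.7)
The plan is to reduce the statement to counting integer points on an interval, exploiting the single syzygy $t_A t_D = t_B t_C = p\mathbf{1}_4$ among the four generators.

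First I would parameterize cosets. Any $t \in T(\Q_p) \cap M_4(\Z_p)$ is equivalent modulo $T(\Z_p)$ to $\diag(p^\alpha, p^\beta, p^\gamma, p^\delta)$ with non-negative exponents satisfying the similitude constraint $\alpha + \delta = \beta + \gamma$; by definition $\val_p(t) = \min(\alpha, \beta, \gamma, \delta) = k$. Since
\[ t_A^a t_B^b t_C^c t_D^d = \diag(p^{a+b}, p^{a+c}, p^{b+d}, p^{c+d}), \]
the problem reduces to counting non-negative integer solutions $(a,b,c,d)$ to the linear system $a+b = \alpha$, $a+c = \beta$, $b+d = \gamma$, $c+d = \delta$.

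Second, I would identify the kernel of the linear map $(a,b,c,d) \mapsto (a+b, a+c, b+d, c+d)$ on $\Z^4$ as the rank-one subgroup generated by $(1,-1,-1,1)$, which is precisely the abelianized relation $t_A t_D = t_B t_C$. Hence once a single integer base solution $(a_0, b_0, c_0, d_0)$ is found, all integer solutions are $(a_0 + n, b_0 - n, c_0 - n, d_0 + n)$ for $n \in \Z$. Existence of a non-negative base solution is easy: take $a_0 = \max(0, \alpha - \gamma)$, then $b_0 = \alpha - a_0$, $c_0 = \beta - a_0$, $d_0 = \gamma - b_0$, and verify non-negativity in the two cases $\alpha \leq \gamma$ and $\alpha > \gamma$ separately, using $\alpha + \delta = \beta + \gamma$.

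Finally, the values of $n$ for which $(a_0+n, b_0-n, c_0-n, d_0+n)$ lies in $\Z_{\geq 0}^4$ are exactly those satisfying $-a_0 \leq n \leq b_0$ and $-d_0 \leq n \leq c_0$ simultaneously, giving an interval of length
\[ \min(b_0, c_0) - \max(-a_0, -d_0) + 1 = \min(a_0+b_0, a_0+c_0, b_0+d_0, c_0+d_0) + 1 = \min(\alpha,\beta,\gamma,\delta) + 1, \]
which equals $k + 1$ as claimed. There is no real obstacle: the main conceptual step is recognizing that the kernel of the parametrization is one-dimensional (which is immediate from the relation $t_A t_D = t_B t_C$ in the torus), after which the proof is elementary linear algebra and a short case analysis.
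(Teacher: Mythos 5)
Your proof is correct, and it takes a genuinely different route from the paper's. Both arguments ultimately rest on the single relation $t_A t_D = t_B t_C = p\mathbf{1}_4$, but they use it in different ways. The paper normalizes by subtracting $k$ from each exponent $u_i$ to reduce to the case $\val_p(t)=0$, where the solution is unique (some $u_i'=0$ kills two of the four exponents), and then runs an induction on $k$: when all $u_i\geq 1$, any solution tuple must have either $\alpha,\delta\geq 1$ or $\beta,\gamma\geq 1$, so one can divide by $t_At_D$ or $t_Bt_C$ and appeal to the inductive hypothesis. Your argument instead encodes the relation as the rank-one kernel $\Z\cdot(1,-1,-1,1)$ of the parametrizing map $(a,b,c,d)\mapsto(a+b,a+c,b+d,c+d)$, exhibits a nonnegative base solution, and counts nonnegative solutions as integer points of a closed interval in $n$. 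The closing computation
\[
\min(b_0,c_0)+\min(a_0,d_0)=\min(a_0+b_0,\,a_0+c_0,\,b_0+d_0,\,c_0+d_0)
\]
is correct because the four sums range over all pairings of one element of $\{a_0,d_0\}$ with one of $\{b_0,c_0\}$. Your route avoids induction and makes the final count $\min(\alpha,\beta,\gamma,\delta)+1=k+1$ completely transparent; the cost is the small case check needed to exhibit a nonnegative base solution, which in the paper's version is replaced by the (also small) case analysis inside the inductive step.
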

\begin{proof}The set of elements of $T(\Q_p) \cap M_4(\Z_p)/T(\Z_p)$ with $\val_p(t) = k$ is given by
\begin{equation*}\label{t valp} \{\diag(p^{u_1},p^{u_2},p^{u_3}, p^{u_4}): u_1 + u_4 = u_2 + u_3, \min\{u_1, u_2, u_3, u_4\} = k \}. \end{equation*}
For such an element, we are asked to find the number of $4$-tuples of nonnegative integers $(\alpha, \beta, \gamma, \delta)$ so that
\begin{equation}\label{tAtB} t_A^\alpha t_B^\beta t_C^\gamma t_D^\delta = \diag(p^{u_1},p^{u_2},p^{u_3}, p^{u_4}).\end{equation}
Set $u_i' = u_i -k$.  There is a unique solution $(\alpha', \beta', \gamma', \delta')$ to the equation
\[t_A^{\alpha'} t_B^{\beta'} t_C^{\gamma'} t_D^{\delta'} = \diag(p^{u_1'},p^{u_2'},p^{u_3'}, p^{u_4'})\]
since some $u_i'=0$, which determines two of $\alpha', \beta', \gamma',$ and $\delta'$, and the other two are determined by two of the remaining values of $u_i'$.  We now prove by induction on $k$ that the solutions are
\begin{equation}\label{solns} (t_At_D)^x (t_B t_C)^y t_A^{\alpha'} t_B^{\beta'} t_C^{\gamma'} t_D^{\delta'} = \diag(p^{u_1},p^{u_2},p^{u_3}, p^{u_4}),\end{equation}
where $x, y \geq 0$ and $x + y = k$.  Assume the case $\val_p(t)=k$ is known, and consider the case $\val_p = k+1$.  Then for each tuple $(\alpha,\beta,\gamma,\delta)$, we must have either both $\alpha \ge 1$ and $\delta \ge 1$ or both $\beta \ge 1$ and $\gamma \ge 1$.  Dividing by $t_At_D$ or $t_Bt_C$ in these respective cases, we find by our inductive hypothesis that every solution must be the product of either $t_At_D$ or $t_Bt_C$ with the left-hand side of (\ref{solns}).  The $k+1$ unique tuples obtainable in this way are the ones enumerated in (\ref{solns}).

\end{proof}

Applying Lemma \ref{lemma2}, one obtains
\begin{align*}\int_{T(\Q_p)}{ \alpha(t) \charf(t)\val_p(pt) |\nu(t)|^{s-3/2} \, dt} &= \left(\sum_{k \geq 0} \alpha(t_A)|p|^{k(s-3/2)}\right) \left(\sum_{k \geq 0} \alpha(t_B)|p|^{k(s-3/2)}\right) \\ & \times \left(\sum_{k \geq 0} \alpha(t_C)|p|^{k(s-3/2)}\right) \left(\sum_{k \geq 0} \alpha(t_D)|p|^{k(s-3/2)}\right).\end{align*}
The proposition follows.
\end{proof}
\bibliography{integralRepnBibKS}
\bibliographystyle{abbrv}
\end{document}